\DeclareSymbolFont{cyrillic}{T2A}{cmr}{m}{n}
\DeclareMathSymbol{\D}{\mathalpha}{cyrillic}{196}
\newlength{\dhatheight}
\newcommand{\doublehat}[1]{%
    \settoheight{\dhatheight}{\ensuremath{\hat{#1}}}%
    \addtolength{\dhatheight}{-0.35ex}%
    \hat{\vphantom{\rule{1pt}{\dhatheight}}%
    \smash{\hat{#1}}}}
\theoremstyle{plain}% default
\newtheorem{theorem}{Theorem}[section]
\newtheorem{lemma}[theorem]{Lemma}
\newtheorem{proposition}[theorem]{Proposition}
\newtheorem{corollary}[theorem]{Corollary}
\theoremstyle{definition}
\newtheorem{definition}{Definition}[section]
\newtheorem{example}{Example}[section]
\newtheorem*{condition}{Condition}
\theoremstyle{remark}
\newtheorem{remark}[theorem]{Remark}%[section]
\def\namedlabel#1#2{\begingroup
   #2%
 \def\@currentlabel{#2}%
   \phantomsection\label{#1}\endgroup
}
\def\R{\ensuremath{\mathbb R}}
\def\N{\ensuremath{\mathbb N}}
\def\I{\ensuremath{{\bf 1}}}
\def\e{{\ensuremath{\rm e}}}
\def\exp{{\ensuremath{\rm Exp}}}
\def\U{\ensuremath{\mathcal U}}
\def\B{\ensuremath{\mathcal B}}
\def\leb{{\rm Leb}}
\def\p{\ensuremath{\mathbb P}}
\def\A{\ensuremath{A^{(q)}}}
\def\n{\ensuremath{n}}
\def\X{\mathcal{X}}
\def\1{{\bf 1}}
\def\W{\ensuremath{\mathscr W}}
\def\BB{\ensuremath{\mathscr B}}
\def\AA{\ensuremath{\mathscr A}}
\def\ie{{\em i.e.}, }
\def\E{\mathbb E}
\def\dist{\ensuremath{\text{dist}}}
\def\TF{\mathcal{T}}
\def\eps{\varepsilon}
\def\cv{\ensuremath{\text {Cor}}}
\newcommand{\dif}{\mathrm{d}}
\numberwithin{equation}{section}
\begin{document}

\title[Complete convergence and records for dynamical systems]{Complete convergence  and records for dynamically generated stochastic processes}

\author[A. C. M. Freitas]{Ana Cristina Moreira Freitas}
\address{Ana Cristina Moreira Freitas\\ Centro de Matem\'{a}tica \&
Faculdade de Economia da Universidade do Porto\\ Rua Dr. Roberto Frias \\
4200-464 Porto\\ Portugal} \email{\href{mailto:amoreira@fep.up.pt}{amoreira@fep.up.pt}}
\urladdr{\url{http://www.fep.up.pt/docentes/amoreira/}}

\author[J. M. Freitas]{Jorge Milhazes Freitas}
\address{Jorge Milhazes Freitas\\ Centro de Matem\'{a}tica \& Faculdade de Ci\^encias da Universidade do Porto\\ Rua do
Campo Alegre 687\\ 4169-007 Porto\\ Portugal}
\email{\href{mailto:jmfreita@fc.up.pt}{jmfreita@fc.up.pt}}
\urladdr{\url{http://www.fc.up.pt/pessoas/jmfreita/}}

\author[M. Magalh\~aes]{M\'ario Magalh\~aes}
\address{M\'ario Magalh\~aes\\ Centro de Matem\'{a}tica da Universidade do Porto\\ Rua do
Campo Alegre 687\\ 4169-007 Porto\\ Portugal} \email{mdmagalhaes@fc.up.pt}

\thanks{MM was partially supported  by FCT grant SFRH/BPD/89474/2012, which is supported by the program POPH/FSE.
ACMF and JMF were partially supported by FCT projects FAPESP/19805/2014 and PTDC/MAT-CAL/3884/2014. All authors were partially supported by CMUP (UID/MAT/00144/2013), which is funded by FCT with national (MEC) and European structural funds through the programs FEDER, under the partnership agreement PT2020. ACMF and JMF would like to thank ICTP, where the final writing of the paper took place, for the financial support and hospitality. We thank Mike Todd for helpful comments and suggestions.}

\date{\today}

\keywords{Complete convergence, point processes, extremal processes, records, clustering, hitting times} \subjclass[2010]{37A50, 60G70, 60G55, 37B20, 37A25}

%37A50  	Relations with probability theory and stochastic processes
%60G70  	Extreme value theory; extremal processes
%37B20  	Notions of recurrence
%60G10  	Stationary processes
%37C25  	Fixed points, periodic points, fixed-point index theory
%--------
%37A25  	Ergodicity, mixing, rates of mixing
%37D25  	Nonuniformly hyperbolic systems (Lyapunov exponents, Pesin theory, etc.)
%37D35  	Thermodynamic formalism, variational principles, equilibrium states
%37C40  	Smooth ergodic theory, invariant measures
%60G55   	Point processes

\begin{abstract}
We consider empirical multi-dimensional Rare Events Point Processes that keep track both of the time occurrence of extremal observations and of their severity, for stochastic processes arising from a dynamical system, by evaluating a given potential along its orbits. This is done both in the absence and presence of clustering. A new formula for the piling of points on the vertical direction of bi-dimensional limiting point processes, in the presence of clustering, is given, which is then generalised for higher dimensions. The limiting multi-dimensional processes are computed for systems with sufficiently fast decay of correlations. The complete convergence results are used to study the effect of clustering on the convergence of extremal processes, record time and record values point processes. An example where the clustering prevents the convergence of the record times point process is given. 
\end{abstract}

\maketitle
\tableofcontents

\section{Introduction}

The main goal of this work is to study the complete convergence of multi-dimensional
empirical Rare Events Point Processes (REPP) arising from stationary stochastic processes generated by chaotic dynamical systems. Namely, we consider a deterministic discrete time dynamical system $(\X,\B,\mu,T)$, where $\X$ is a compact manifold, $\mathcal B$ is its
Borel $\sigma$-algebra, $T:\X\to\X$ is a measurable map and $\mu$ is a $T$-invariant probability measure, \ie $\mu(T^{-1}(B))=\mu(B)$, for all $B\in\B$. One can think of $T:\X\to\X$ as the evolution law that establishes how time affects the transitions from one state in $\X$ to another. We consider an observable (measurable) function $\varphi:\X\to\R$ and define the stochastic process $X_0, X_1, \ldots$ by:
\begin{equation}
\label{eq:SP-def}
X_n=\varphi \circ T^n, \quad \mbox{for every $n\in\N_0$,}
\end{equation}
where $T^n$ denotes the $n$-fold composition of $T$, with the convention that $T^0$ is the identity map on $\X$ and $\N_0$ is the set of non-negative integers.

We will be particularly interested in the two-dimensional empirical REPP counting the number of observations $X_j$ that lie on some normalised interval of thresholds, for $j$ on a certain normalised time frame, \ie 
\begin{equation}
\label{EREPP-def1}
N_n([t_1,t_2)\times[\tau_1,\tau_2))=\#\{j\in [nt_1,nt_2): u_n(\tau_2)<X_j\leq u_n(\tau_1)\},
\end{equation} 
where the sequence of levels $(u_n(\tau))_{n\in\N}$ is chosen to avoid non-degeneracy and is typically such that the expected number of exceedances, \ie observations satisfying $X_j> u_n(\tau)$, among the first $n$ random variables (r.v.) of the process, is asymptotically $\tau$ (see equation \eqref{un} below). %, as in the Poisson limit theorem for a sequence of binomially distributed random variables. 
In particular, this means that the event $X_j>u_n(\tau)$ is asymptotically rare, \ie $\lim_{n\to\infty}\mu(X_j>u_n(\tau))=0$, which motivates the name used for the empirical point process.

Since these processes keep information about both the times and magnitudes of extreme observations, they are quite useful for risk assessment and evaluation of the impact of hazardous events corresponding to phenomena that can be easily modelled by dynamical systems, such as meteorological events or the behaviour of financial markets. In fact, the complete convergence of such two-dimensional empirical REPP is a powerful technique to study the extremal behaviour of the systems and has a large scope of applications.  Using the appropriate projecting map, it allows us to obtain asymptotic behaviour of the higher order statistics and exceedances of high levels, analyse the behaviour of record times, \ie the moments at which a record observation occurs (\cite{P71,R75,R87}), or the convergence of the related extremal processes introduced in \cite{D64} and studied in \cite{L64, P71, R75}, or the convergence of the one-dimensional REPP as considered in \cite{HHL88, FFT10,FFT13}.

In the case of independent and identically distributed (iid) sequences of random variables or for stationary sequences for which there is no clustering of exceedances, \ie the non-appearance of clumps of abnormal observations above high thresholds, then the empirical REPP $N_n$ given as in \eqref{EREPP-def1} converges to a two-dimensional Poisson process, with two-dimensional Lebesgue measure as intensity measure (see \cite{P71, R75, R87} and \cite[Chapter~5.7]{LLR83}).

In the context of dynamical systems, we recall that a connection between extreme values and hitting times was observed by Collet \cite{C01} and formally established in \cite{FFT10,FFT11}. The idea is that the observable function $\varphi$ is typically maximised at a certain point $\zeta\in\X$ and so, observations with abnormally high values correspond to entrances of the orbit in small neighbourhoods of $\zeta$ and the magnitude of the excesses is determined by how small is the distance of the orbital point to $\zeta$. For higher dimensional systems, it may be useful to consider, instead of the two-dimensional time-magnitude space $[0,+\infty)\times[0,+\infty)$ used in \eqref{EREPP-def1}, the multi-dimensional time-position space $[0,+\infty)\times T_\zeta\X$, where $T_\zeta\X\subset\R^d$, in the second component, is the tangent space to $\X$ at $\zeta$, so that after a normalisation and projection from $T_\zeta\X$ to $\X$, we can keep track of the position of the orbit on the phase space, which will ultimately allow us to recover information about the distance to $\zeta$ and therefore about the magnitude of the observations. 
%$[0,+\infty)\times \V$, where $\V\subset\R^d$, in the second component, is identified with an open neighbourhood of $\zeta$ on the tangent space $T_\zeta\X$, so that after a normalisation and projection from $T_\zeta\X$ to $\X$, we can keep track of the position of the orbit on the phase space.

In the context of dynamical systems, the complete convergence of two-dimensional empirical REPP has been addressed in the very recent work \cite{HT17}, only in the absence of clustering of rare events, which means that the limiting process is a Poisson process, as in the independent case. In  order to prove the complete convergence, the authors use a thinning technique, as in \cite[Section~5.5--5.7]{LLR83}, that in some sense allows us to reduce the problem the one-dimensional multilevel case. Let us mention that, independently, in \cite{PS17}, the authors also study some general spatio-temporal point processes with a different construction (the spatial component is bounded), which they manage to apply to hyperbolic systems such as billiards. 

In this paper we prove the complete convergence of multi-dimensional empirical REPP both in the presence and absence of clustering of rare events for non-uniformly expanding dynamical systems which present a sufficiently fast loss of memory. One of the major highlights is the precise description we provide of the limiting processes that appear in the presence of clustering. We recall that, in this context, clustering is associated to some sort of underlying periodicity \cite{FFT12}, such as when the point $\zeta$, where $\varphi$ is maximised, is a periodic point. The effect of clustering in the limiting process for \eqref{EREPP-def1} can be portrayed as a piling of points in the vertical direction, corresponding to the observations within the same cluster, while in the horizontal direction these vertical piles appear scattered as in a typical Poisson process. This effect has been described in \cite{M77, H87, N02}. However, the distribution of the points on the vertical piles has never been clearly depicted nor computed. In fact, as stated in \cite{N02}, the Mori-Hsing characterisation of the limiting process, in \cite{M77, H87}, can be regarded as implicit. Using some sort of adaptation of the thinning technique, described in \cite[Section~5.5]{LLR83}, to the presence of clustering, more insight about the  limiting processes is given in \cite{N02}, by means of multilevel projections to 1-dimensional REPP. However, up to our knowledge,  there is still no clear picture about these processes in the literature. Hence, in our opinion, the simple representations we give of the limiting processes such as in equations \eqref{eq:repelling-limit-REPP-2d}, \eqref{eq:repelling-limit-REPP-multi-d}, \eqref{eq:repelling-limit-REPP-2d-special}, \eqref{eq:repelling-limit-REPP-2d-special2} and, most of all, the formulas to compute their distributions provide a new decisive step towards the full understanding of such processes. 

The convergence results in  \cite{M77} assume $\alpha$-mixing, while, in both \cite{H87,N02},  the stationary stochastic processes are assumed to satisfy a condition $\Delta$, similar to Leadbetter's $D$ condition (see \cite{LLR83}). None of these assumptions is easily verified for the stochastic processes  \eqref{eq:SP-def} arising from the dynamics that we consider here. For this reason, we needed to prove the convergence results under some weaker assumption that we call $\D_q^*$ so that we can apply  the results to dynamical systems.  Hence, we believe that the results here are also of interest in the classical field of extremes because we prove the complete convergence of general stationary stochastic processes under weaker assumptions on the long range dependence structure.

The techniques we use to prove the complete convergence build up on an idea introduced in \cite{FFT12} and further elaborated in \cite{FFT13,FFT15}, which consists in realising that, in the limit, the probability of having no exceedances can be approximated by the probability of having no escapes (a terminology introduced in \cite{FFT12}). Usually, in the dynamical setting, an exceedance corresponds to an entrance in a certain ball around the point $\zeta\in\X$, where $\varphi$ is maximised, while an escape corresponds to an entrance in a certain annulus around $\zeta$. This idea stated in Proposition~\ref{prop:relation-balls-annuli-general} below is at the core of the argument that allows us to prove the complete convergence under weaker assumptions, which can be easily verified by the systems we consider, and to obtain the new formulas to describe the limiting process, such as in \eqref{eq:limit-Anq}.

As a consequence of the complete convergence of the empirical multi-dimensional REPP,  we obtain the convergence of extremal processes, record times and record values point processes and one-dimensional REPP for stochastic processes as in \eqref{eq:SP-def}, both in the presence and absence of clustering. These corollaries follow by application of the continuous mapping theorem. In particular, this allows us to recover the main result in \cite{FFT13}, which proved the convergence of one-dimensional REPP to a compound Poisson process with Poisson events charged with a geometric multiplicity distribution, in the presence of clustering caused by a repelling periodic point $\zeta$. The geometric distribution is explained by the fact that a long cluster appears whenever the orbit enters a preimage of high order of the ball and the backward contraction rate at $\zeta$ gives the exponential factor. This can now be fully appreciated graphically by the distribution of the points on the vertical piles (see Figure~\ref{fig:processes}). 

The first results about extremal processes and records in the dynamical systems context were proved in \cite{HT17}, in the absence of clustering. Hence, another novelty here is study of the effect of clustering  in the convergence of extremal processes, record time and record value point processes for  dynamically generated processes, as in \eqref{eq:SP-def}. In fact, even in the classical context of general stationary stochastic processes satisfying the $\Delta$ condition, we could not find, in the literature, a proof of the continuity of the projection to $\mathbb D((0,\infty))$,  the space of functions on $(0,\infty)$ that are right continuous and have left-hand limits, equipped with the Skorokhod's $M_1$ topology, which we also provide to study the convergence of extremal processes. We remark that the proofs of continuity of the projection to $\mathbb D((0,\infty))$, equipped with the Skorokhod's $J_1$ topology used in the iid case are not sufficient for our applications since the $J_1$ topology is not adequate to deal with the stacking of points on the vertical direction. The Skorokhod's $M_1$ topology was also used recently in \cite{MZ15} to prove convergence to stable L\'evy processes. 

Regarding the study of record time and record value point processes, we are able to establish their convergence when the point $\zeta$ is a repelling periodic point, in which case the clustering has no effect on the limiting process obtained. However, we give an example where the convergence cannot be established at all because of the particular way how points get stacked on the same vertical pile. This contrasts with the situation where there is no clustering and, to our knowledge, is another novelty.

%
%
%The same comments apply to the convergence of extremal processes considered in \cite{R75, HT17}.

Among the dynamical systems to which we can apply the complete convergence results, we mention uniformly expanding systems, such as Rychlik maps \cite{R88} or higher dimensional versions such as in \cite{S00}, non-uniformly expanding systems which admit a uniformly expanding first return time induced system, such as Manneville-Pomeau \cite{PM80} or Liverani-Saussol-Vaienti maps \cite{LSV99} or Misiurewicz quadratic maps \cite{M81}. %and uniformly hyperbolic systems such as the toral automorphisms considered in \cite{CFFH15}. 
Moreover, in the absence of clustering, our results can also be applied to H\'enon maps, billiards with exponential and polynomial decay of correlations and general non-uniformly hyperbolic systems admitting Young towers with exponential and polynomial tails.

%\section{The setting}

\section{Complete convergence of empirical Rare Events Point Processes}

\subsection{Review of point process theory}

The theory of point processes provides a very powerful tool to study the extremal behaviour of stochastic processes. We start by establishing some notation, the definition of point processes and recalling some useful facts. For more details on the subject  see \cite{K86} and \cite[Chapter 3]{R87}. 

\subsubsection{Definition of point processes}

Let $E$ be a locally compact topological space with countable basis and $\mathscr E$ its Borel $\sigma$-algebra. In the applications below $E$ is a subset of $\R^d$ for some $d\in\N$. 

A \emph{point measure} on  $E$ is a measure $m$ of the following form:
$$
m=\sum_{i=1}^\infty \delta_{x_i},\quad \mbox{where  $\delta_{x_i}$ is the dirac measure at $x_i\in E$.}
$$
The measure $m$ is said to be \emph{Radon} if $m(K)<\infty$, for all compact sets $K\in\mathscr E$, and \emph{simple} if all the $x_i$, with $i=1, 2,\ldots$, are distinct. Let $M_p(E)$ be the space of all Radon point measures on $E$ and $\mathscr M_p(E)$ be the smallest $\sigma$-algebra containing the sets $\{m\in M_p(E): m(F)\in B\}$ for all $F\in\mathscr E$ and all Borel $B\subset[0,\infty)$. We endow $M_p(E)$ with the vague topology so that it becomes a complete, separable metric space. A \emph{point process} on E is a measurable map from a probability space to $M_p(E)$ equipped with its $\sigma$-algebra $\mathscr M_p(E)$. For example, $N_n$ described by \eqref{EREPP-def1} is a point process on $E=[0,\infty)^2$, since it can be formally written as a measurable map from the probability space $(\X,\mathcal B, \mu)$ to $(M_p(E),\mathscr M_p(E))$.

\subsubsection{Weak convergence of point processes} Consider a sequence of point processes $(N_n)_{n\in\N_0}$ defined on some probability space $(\Omega, \mathcal F, \p)$ and taking values on $M_p(E)$ equipped with $\mathscr M_p(E)$. We say that $N_n$ converges weakly to $N_0$, which we denote by $N_n\Rightarrow N_0$, if for every bounded, continuous real valued function $f$ on $M_p(E)$ (endowed with the vague topology), we have:
$$
\int_{M_p(E)} f \,d(N_n)_*\p\rightarrow \int_{M_p(E)} f \,d(N_0)_*\p, \quad \mbox{as $n\to\infty$,}
$$ 
where $(N_n)_*\p$ is the measure defined on $\mathscr M_p(E)$ corresponding to the pushforward of $\p$ by $N_n$, for each $n\in\N_0$. Proving weak convergence using the definition is often quite hard and, in fact, we will use the following criteria due to Kallenberg \cite{K86}, which applies when the limiting point process is simple. 
\begin{theorem}[Kallenberg's criteria]
\label{thm:Kallenberg}
Let $\mathscr S$ be a semi-ring that generates the ring $\mathscr R$, which, in turn, generates $\mathscr E$. Assume that $N_0$ is a simple point process on $M_p(E)$ and $\p(N_0(\partial F)=0)=1$, for all $F\in\mathscr R$. If we have
\begin{equation*}
\mbox{(A)}\quad \lim_{n\to\infty} \p(N_n(F)=0)=\p(N_0(F)=0)\qquad \mbox{(B)} \quad \lim_{n\to\infty} \E(N_n(G))=\E(N_0(G))<\infty,
\end{equation*}
 for all $F\in\mathscr R$ and $G\in\mathscr S$, then $N_n\Rightarrow N_0$.
\end{theorem}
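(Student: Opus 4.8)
The plan is to combine a tightness argument drawn from (B) with an identification of every subsequential limit via (A), the key structural input being that a simple point process is completely determined by its avoidance (void) probabilities.

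First I would use (B) to show that $(N_n)$ is relatively compact in $M_p(E)$ for the vague topology. Since $\mathscr S$ is a semi-ring covering $E$, every compact $K\subset E$ is covered by finitely many elements of $\mathscr S$, so $\sup_n\E(N_n(K))<\infty$ by (B); Markov's inequality then gives $\limsup_n\p(N_n(K)>m)\le m^{-1}\sup_n\E(N_n(K))\to0$ as $m\to\infty$, which is exactly Prohorov's tightness criterion for point processes. Consequently every subsequence of $(N_n)$ has a further subsequence converging weakly to some point process $\tilde N$.

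Fix such a limit $N_{n_k}\Rightarrow\tilde N$. For open $G$ the map $m\mapsto m(G)$ is lower semicontinuous in the vague topology, so $\E(\tilde N(G))\le\liminf_k\E(N_{n_k}(G))$; trapping a ring-set boundary $\partial F$ inside an open set built from $\mathscr S$ and using (B) together with the outer regularity of the Radon measure $\E(N_0(\cdot))$, one deduces $\E(\tilde N(\partial F))\le\E(N_0(\partial F))$, and the same lower-semicontinuity bound gives $\E(\tilde N(I))\le\E(N_0(I))$ for every $I\in\mathscr R$. The hypothesis $\p(N_0(\partial F)=0)=1$ is equivalent to $\E(N_0(\partial F))=0$ because $N_0(\partial F)$ is $\N_0$-valued, so $\E(\tilde N(\partial F))=0$ and hence $\tilde N(\partial F)=0$ almost surely. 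Thus every $F\in\mathscr R$ is a continuity set for $\tilde N$, and the functional $m\mapsto\1\{m(F)=0\}$ is almost surely continuous at $\tilde N$. By the portmanteau theorem and (A), $\p(\tilde N(F)=0)=\lim_k\p(N_{n_k}(F)=0)=\p(N_0(F)=0)$ for all $F\in\mathscr R$, so $\tilde N$ and $N_0$ share the same avoidance function on $\mathscr R$.

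It remains to check that $\tilde N$ is itself simple, so that the avoidance function pins down its law. For a bounded set $B$ and a dissecting sequence of $\mathscr R$-partitions $\{I_{k,j}\}_j$ of $B$, the elementary inequality $\p(\tilde N(I)\ge2)\le\E(\tilde N(I))-\p(\tilde N(I)\ge1)$, combined with $\E(\tilde N(I))\le\E(N_0(I))$ and $\p(\tilde N(I)\ge1)=\p(N_0(I)\ge1)$ (the latter from the matching avoidance function), yields $\sum_j\p(\tilde N(I_{k,j})\ge2)\le\E(N_0(B))-\sum_j\p(N_0(I_{k,j})\ge1)$, whose right-hand side tends to $0$ as the mesh refines precisely because $N_0$ is simple. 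Hence $\tilde N$ is simple, and Kallenberg's uniqueness theorem for simple point processes forces $\tilde N\overset{d}{=}N_0$; since every subsequential limit has the law of $N_0$, relative compactness gives $N_n\Rightarrow N_0$. I expect the main obstacle to be this middle part: squeezing out of the two scalar hypotheses both the continuity-set property needed to pass void probabilities through the weak limit and the simplicity of the limit, the deep underlying fact being that the avoidance function determines the distribution of a simple point process.
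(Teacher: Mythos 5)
The paper does not actually prove this statement: it is quoted from Kallenberg \cite{K86} (see also \cite[Proposition~3.22]{R87}) and used as a black box, so there is no internal proof to compare against. Your argument is, in outline, the standard textbook proof of that result --- tightness from (B) via Markov's inequality, identification of every subsequential limit $\tilde N$ by showing each $F\in\mathscr R$ is a stochastic continuity set for $\tilde N$ so that (A) transfers the avoidance function, simplicity of $\tilde N$ via the dissecting-partition inequality $\p(X\ge 2)\le \E(X)-\p(X\ge 1)$, and the uniqueness theorem for simple point processes --- and all four steps are sound. The one caveat worth recording is that several of your steps use more than the bare hypothesis that $\mathscr S$ generates $\mathscr E$: you need (i) every compact set to be covered by finitely many members of $\mathscr S$ (for tightness and for $\E N_0(\cdot)$ to be Radon and outer regular), (ii) each point to admit arbitrarily small members of $\mathscr S$ containing it in their interior (to trap $\partial F$ in a finite $\mathscr S$-union inside a small open set), and (iii) a dissecting sequence of $\mathscr R$-partitions separating points (for the simplicity step). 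These are exactly the ``DC-semiring'' requirements in Kallenberg's actual statement; the paper's formulation omits them, but they hold for the semirings of half-open rectangles used throughout, so your proof is correct in the setting where the theorem is applied.
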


\begin{remark}\label{rem:equality-pp}
Observe that two simple point processes $N$ and $N^*$ are identical in distribution if for all $F\in\mathscr R$ we have
$$\p(N(F)=0)=\p(N^*(F)=0).$$
See \cite[Proposition~3.23]{R87} or \cite[Theorem~3.3]{K86}.
\end{remark}

The power of weak convergence of point processes comes from the fact that from a weak convergence result one can obtain several corollaries simply by using the Continuous Mapping Theorem (CMT). In fact, the CMT applied in this setting, allows one to say that if $N_n\Rightarrow N_0$ and $h$ is a map from $M_p(E)$, endowed with the vague topology, to some other metric space, such that the probability of $N_0$ landing on the set of the discontinuities of $h$ is equal to $0$, then  $h(N_n)\Rightarrow h(N_0)$.

\subsection{Empirical and limiting point processes} We start by defining the empirical one-dimensional and two-dimensional REPP. We remark that our main goal is to study them in the dynamical context but since our results actually apply in a much wider framework, we only assume for now that $X_0, X_1, \ldots$ is a stationary stochastic process defined on the probability space $(\Omega, \mathcal F, \p)$, not necessarily arising from a dynamics as in \eqref{eq:SP-def}. 

In order to give a precise definition of the empirical REPP, we need a normalising sequence of thresholds $(u_n(\tau))_{\n\in\N}$, such that the average number of exceedances of $u_n(\tau)$ among the first $n$ r.v. of the process is asymptotically constant and equal to $\tau>0$, which can be seen as the asymptotic frequency of exceedances. Namely, we require that
\begin{equation}
\label{un}
n\p(X_0>u_n(\tau))\rightarrow \tau> 0,\quad \mbox{as $n\to\infty$.} 
\end{equation}

Of course the higher the frequency $\tau$ the lower the corresponding thresholds $u_n(\tau)$ should be. In fact, we assume that for each $n\in\N$, the level function $u_n(\tau)$ is continuous and strictly decreasing in $\tau$. Hence, $u_n$ has an inverse function, $u_n^{-1}$, which, for each value $z$ on the range of the r.v. $X_0$, returns the asymptotic frequency $\tau=u_n^{-1}(z)$ that corresponds to the average number of exceedances, among $n$ observations, of a threshold placed at the value $z$.
\subsubsection{One-dimensional REPP}
\begin{definition}
\label{def:1d-REPP}
We define the empirical one-dimensional REPP as
$$\displaystyle
N_n^1=\sum_{j=0}^{\infty} \delta_{j/n}\I_{X_j>u_n}\;.
$$
\end{definition}
In the classical setting of stationary stochastic processes, in the absence of clustering, in \cite{L76} these processes are shown to converge to a homogenous Poisson process of intensity $\tau$. 
In \cite{HHL88}, the authors consider the presence of clustering case and show that the limiting process is a compound Poisson process, which can be written as 
\begin{equation}
\label{eq:CPP}
\tilde N^1=\sum_{j=1}^\infty D_j\delta_{T_j},
\end{equation} where  $T_j=\sum_{\ell=0}^j \bar T_\ell$ is based on an iid sequence $(\bar T_j)_{j\in\N}$ such that $\bar T_j\stackrel[]{D}{\sim} \text{Exp}(\theta\tau)$, $j=1,2 \ldots$, and $(D_j)_{j\in\N}$ is an iid sequence of positive integer r.v. independent of the sequence $(T_j)_{j\in\N}$. 
The idea is that, in the limit, the exceedances in the same cluster get concentrated in the same Poisson time event that has now a multiplicity, 
which corresponds to the cluster size, whose average is $1/\theta$. The parameter $0<\theta<1$ is called the \emph{Extremal Index} (EI) and gives an indication of the intensity of clustering. In fact, in most situations the EI can be seen as the inverse of the average cluster size.

In \cite{FFT10}, the convergence to a homogenous Poisson process, $N^1$, was proved for processes arising from dynamical systems where $\zeta$ was a typical point w.r.t. $\mu$. In \cite{FFT13}, the results of \cite{HHL88} were generalised and applied to stochastic processes arising from dynamical systems, where the observable $\varphi$ was maximised at a repelling periodic point $\zeta$. The parameter $\theta$ depended on the expansion rate at $\zeta$, which, for 1-dimensional systems with a measure $\mu$ absolutely continuous w.r.t. Lebesgue and sufficiently regular, can be written as $\theta=1-1/|DT_\zeta^p|$, where $DT_\zeta^p$ is the derivative of $T^p$ at the periodic point $\zeta$ of period $p$. Moreover, a new formula to compute the multiplicity distribution was provided and, in these cases of repelling periodic points, it was shown the appearance of  a geometric multiplicity distribution, namely, $\pi(\kappa)=\p(D_i=\kappa)
=\theta(1-\theta)^{\kappa-1}$. This compound Poisson distribution was also observed in \cite{HV09}. Then, in \cite{AFV15} a dichotomy was established: either $\zeta$ is not periodic and the empirical REPP converges to a Poisson process or $\zeta$ is periodic  and, in this case, a compound Poisson process with geometric multiplicity distribution applies. Different multiplicity distributions were obtained when $\zeta$ was a discontinuity point of the system (\cite{AFV15}) and when multiple correlated maximal points were considered in \cite{AFFR16}.
\subsubsection{Two-dimensional REPP}\label{subsubsec:2d-REPP}
\begin{definition}
\label{def:REPP-2d}
We define the empirical two-dimensional REPP as
$$\displaystyle
N_n=\sum_{j=0}^{\infty} \delta_{(j/n, u_n^{-1}(X_j))}\;.
$$
\end{definition}
This type of two-dimensional REPP, with a different normalisation on the second component, was studied earlier in \cite{P71,R75}, on the iid setting. In \cite{A78} and \cite[Chapter~5.3]{LLR83}, for stationary sequences satisfying a uniform mixing condition of Leadbetter's type and in the absence of clustering, the process $N_n$ is shown to converge weakly to a two-dimensional Poisson process with two-dimensional Lebesgue intensity measure, $N$, which can be described as follows.  Let $(\bar T_{i,j})_{i,j\in\N}$ be a matrix of iid r.v. with common $\text{Exp}(1)$ distribution and consider $(T_{i,j})_{i,j\in\N}$ given by: 
\begin{equation}
\label{T-matrix}
T_{i,j}=\sum_{\ell=1}^j \bar T_{i,\ell}.
\end{equation}
Note that the rows of $(T_{i,j})_{i,j\in\N}$ are independent. Let $(U_{i,j})_{i,j\in\N}$ be a matrix of independent r.v. such that, for all $j\in\N$, the r.v. $U_{i,j}\stackrel[]{D}{\sim} \mathcal U_{(i-1,i]}$, \ie $U_{i,j}$ has a uniform distribution on the interval $(i-1,i]$. We assume that the matrices $(T_{i,j})_{i,j\in\N}$ and $(U_{i,j})_{i,j\in\N}$ are mutually independent. Then we can write:
\begin{equation}
\label{Poisson-2d}
N=\sum_{i,j=1}^\infty \delta_{(T_{i,j}, U_{i,j})}.
\end{equation}
In the absence of clustering, the weak convergence $N_n\Rightarrow N$ was also established by Holland and Todd in \cite{HT17}, under weaker mixing assumptions (the authors introduce a condition $\mathcal D_r(u_n^{(k)})$ similar to $D_3(u_n)$ used in \cite{FFT10}), which allowed them to apply their results to stochastic processes arising from dynamical systems like the ones we consider here.

In the presence of clustering, in the dynamical setting, there are no results regarding the convergence of $N_n$. In the classical setting of stationary stochastic processes, we mention the results by Mori \cite{M77} for $\alpha$-mixing processes, by Hsing \cite{H87} for processes satisfying a condition $\Delta$, which is closely related to $D(u_n)$ introduced by Leadbetter, by Novak \cite{N02},  also under condition $\Delta$, and by Resnick and Zeber \cite{RZ13} for Markov chains. The Mori-Hsing characterisation tells us that the limiting process, $N$, admits the representation:
\begin{equation}
\label{Mori-Hsing}
\tilde N=\sum_{i,j=1}^\infty\sum_{\ell=1}^{K_{i,j}} \delta_{(T_{i,j}, U_{i,j}Y_{i,j,\ell})},
\end{equation}
where  $T_{i,j}$ is given by \eqref{T-matrix}, for $\bar T_{i,j}\stackrel[]{D}{\sim}\text{Exp}(\theta)$, $U_{i,j}\stackrel[]{D}{\sim} \mathcal U_{(i-1,i]}$ and $(Y_{i,j,\ell})_{\ell\in\N}$ are the points of a point process $\gamma_{i,j}$ on $[1,\infty)$, with 1 as an atom. All the $\gamma_{i,j}$, $i,j\in\N$ are identically distributed and $(\bar T_{i,j})_{i,j\in\N}$, $(U_{i,j})_{i,j\in\N}$ and $(Y_{i,j,\ell})_{i,j\in\N}$ are mutually independent for all $\ell\in\N$.

One of the drawbacks of the Mori-Hsing representation is the fact that little is known about the vertical points given from the point processes $\gamma_{i,j}$ and, in particular, how to compute their distributions. Novak rewrites the limiting process using one-dimensional processes and an adaptation of the thinning technique described in \cite[Chapter~5.5]{LLR83}, but still no much insight regarding  the distribution of the piling on vertical direction is given. The results below fill this gap. 
\subsubsection{Two-dimensional REPP in the dynamical setting} 
\label{subsubsec:2d-DS}
In order to illustrate the potential of our approach, we consider stochastic processes arising from dynamical systems in the same setting described in \cite{FFT10}, \cite[Section~3.1]{FFT12} or \cite[Chapter~4.2.1]{LFFF16} and give a precise explicit formula for the limiting process. 
We recall that in most situations the observable $\varphi$ achieves a maximum at a point $\zeta$, which means that a natural condition then is to assume that, at least locally, $\varphi$ behaves as a decreasing function of the distance to $\zeta$ (for a given metric adequate to the study case). Hence, we assume that the observable $\varphi:\X\to\R\cup\{+\infty\}$ is of
the form
\begin{equation}
\label{eq:observable-form} \varphi(x)=g\left(\dist(x,\zeta)\right),
\end{equation} where $\zeta$ is a chosen point in the
phase space $\X$ and the function $g:[0,+\infty)\rightarrow {\mathbb
R\cup\{+\infty\}}$ is such that $0$ is a global maximum ($g(0)$ may
be $+\infty$); $g$ is a strictly decreasing bijection $g:V \to W$
in a neighbourhood $V$ of
$0$; and has one of the three types of behaviour described in \cite[Section~1.1]{FFT10} or \cite[Chapter~4.2.1]{LFFF16}. From the definition of $\varphi$, we have 
\begin{equation}
\label{eq:ball-link}
\{X_0>u_n\}=B_{g^{-1}(u_n)}(\zeta),
\end{equation} where $B_{\eps}(\zeta)$ denotes a ball of radius $\eps$ around $\zeta$. As observed in \cite[Remark~1]{FFT10}, the type of $g$ determines the tail of the distribution of $X_0$ and so there exists a clear link between the three types of $g$ and the three types of domains of attraction for maxima for the distribution of $X_0$.

Using \eqref{eq:observable-form} and \eqref{un}, in a similar way to the computation leading to \eqref{eq:ball-link}, we obtain that $u_n^{-1}(z)\sim n\mu\left(B_{g^{-1}(z)}(\zeta)\right)$, where we use the notation $A(n)\sim B(n)$, when $\lim_{n\to \infty} \frac{A(n)}{B(n)}=1$. Hence, in this situation, we can rewrite the two-dimensional REPP given in Definition~\ref{def:REPP-2d}, as measurable map from $(\X,\mathcal B, \mu)$ to $M_p([0,\infty)^2)$, in the following way:
\begin{equation}
\label{eq:REEP-2d-dynamic}
N_n(x)=\sum_{j=0}^\infty \delta_{\left(j/n, \,n\mu\left(B_{\dist(T^j(x),\zeta)}(\zeta)\right)\right)}\;.
\end{equation}
This formula helps to predict the liming process. Note that there is a contraction in the horizontal direction, which is responsible for the collapsing of the exceedances of the same cluster on the same vertical line, and an expansion, by the inverse factor, in the vertical direction, which makes it clear that only when the orbit is very close to $\zeta$, one has an exceedance of an high threshold, corresponding to a small limiting frequency $\tau$. 

In order to be more concrete, assume that $\mu$ is absolutely continuous with respect to Lebesgue measure with a sufficiently regular density so that $\mu(B_\eps(\zeta))\sim C\eps^d$, as $\eps\to0$, for some constant $C>0$ and where $d$ denotes the integer dimension of $\X$. Assume further that, as in \cite{FFT12,FFT13}, $\zeta$ is a repelling periodic point of prime period $p$ and, moreover, the derivative $DT_\zeta^p$ expands uniformly in every direction at a rate $\alpha$. This means that if $T^j(x)$ is very close to $\zeta$ then $\dist(T^{j+p}(x),\zeta)\sim \alpha\, \dist(T^j(x),\zeta)$. As a consequence, we will show that, in these cases, the limiting process can be written as:
\begin{equation}
\label{eq:repelling-limit-REPP-2d}
\tilde N=\sum_{i,j=1}^\infty\sum_{\ell=0}^{\infty} \delta_{(T_{i,j},\,\alpha^{\ell d}\, U_{i,j})},
\end{equation}
where  $T_{i,j}$ is given by \eqref{T-matrix}, for $\bar T_{i,j}\stackrel[]{D}{\sim}\text{Exp}(\theta)$, with $\theta=1-\alpha^{-d}$, $U_{i,j}\stackrel[]{D}{\sim} \mathcal U_{(i-1,i]}$ and, as before, $(\bar T_{i,j})_{i,j\in\N}$ and $(U_{i,j})_{i,j\in\N}$ are mutually independent. 

\begin{figure}[h]
\includegraphics[height=4.5cm]{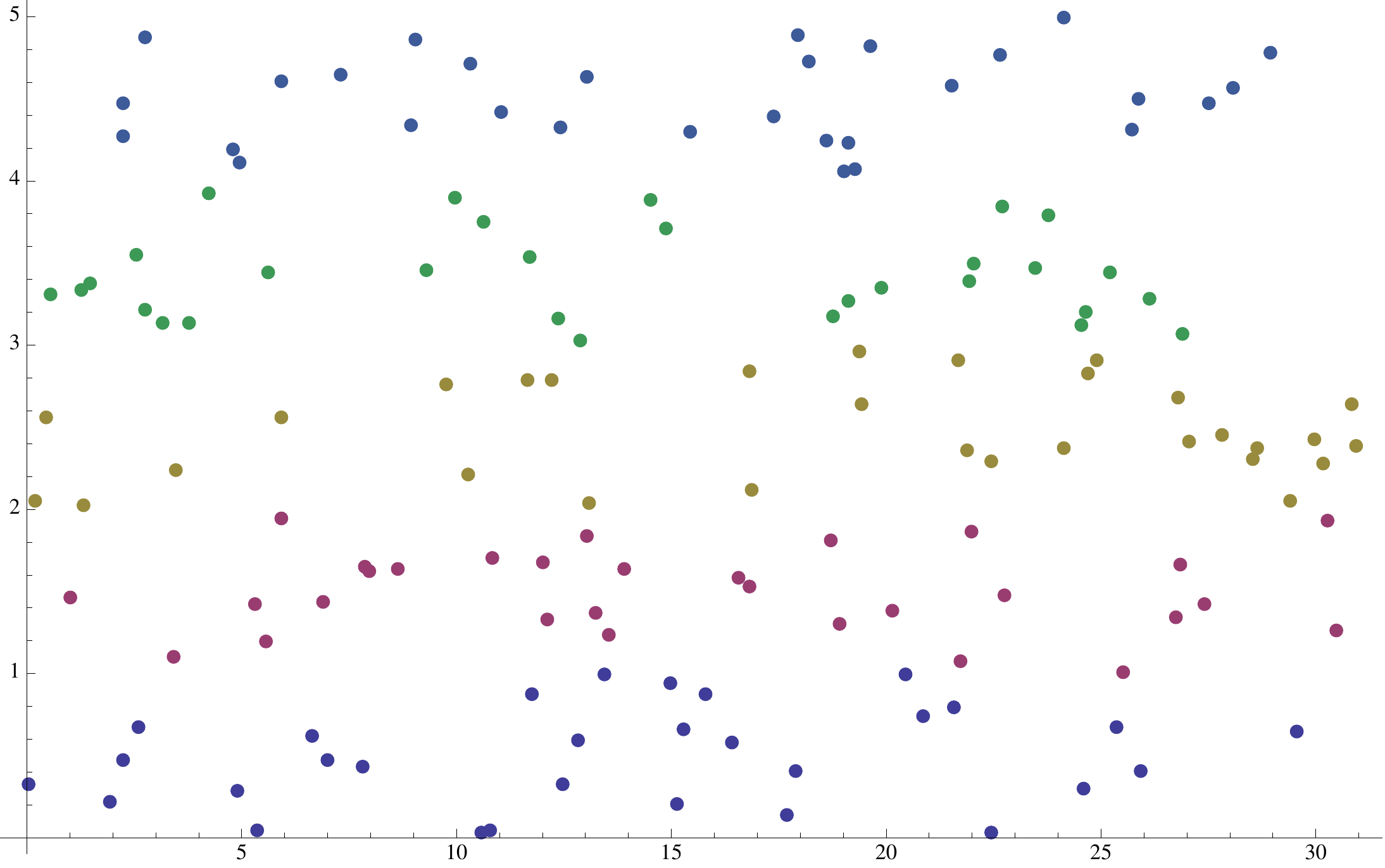}\qquad \includegraphics[height=4.5cm]{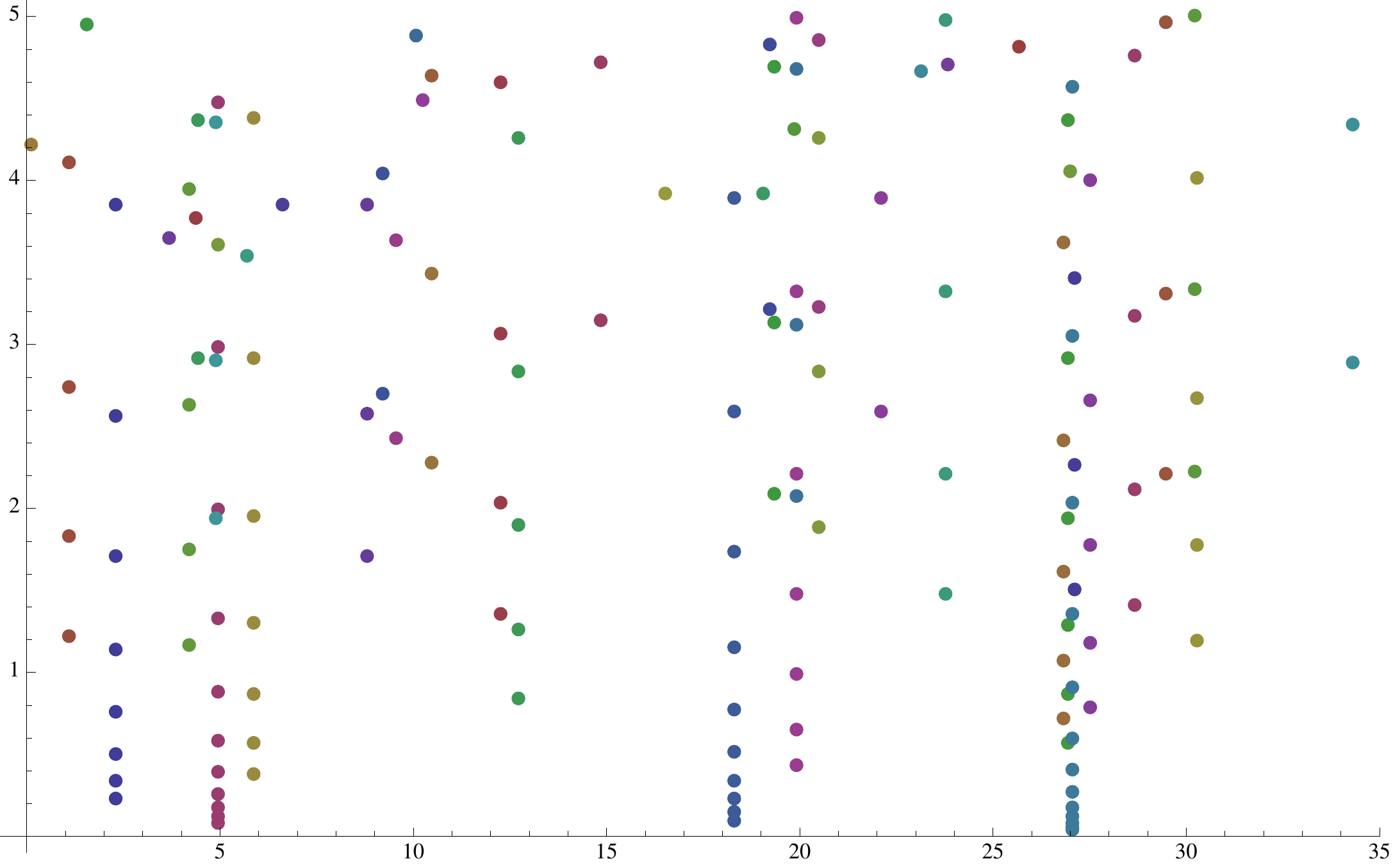}
\caption{Simulation of the two-dimensional Poisson process $N$ given by \eqref{Poisson-2d}, on the left, and simulation of the two-dimensional process $\tilde N$ given by \eqref{eq:repelling-limit-REPP-2d}, with $d=1$ and $\alpha=3/2$, on the right.}
\label{fig:processes}
\end{figure}

Note that this new formula completely describes the limiting two-dimensional process, including  the distribution of the points on vertical direction, and arises naturally from the dynamical system. In fact, for example, for one-dimensional uniformly expanding systems as considered by Rychlik, in \cite{R88}, a dichotomy holds. Either $\zeta$ is non-periodic and the two-dimensional REPP converges to the Poisson process given by \eqref{Poisson-2d}, or $\zeta$ is periodic and the limiting process is given by \eqref{eq:repelling-limit-REPP-2d}. Moreover, note that the geometric multiplicity distribution obtained in \cite{FFT13} for the limit of the one-dimensional REPP can be easily understood now from the geometric displacement of the points on the vertical lines, by the factor $\alpha^d$.

\subsubsection{Multi-dimensional REPP in the dynamical setting}\label{subsubsec:multi-dimensional}
For higher dimensional systems, when the expansion rate is not uniform in all directions, for example, it may be useful to keep track of the movement of the points in all directions so that we can understand completely how does the distance to $\zeta$ evolve, which is directly tied to the magnitude of the observations (see Corollary~\ref{cor:two-dimensional}). %and not only the distance to $\zeta$. 
In order to do this, we consider the tangent space $T_\zeta\X$ and, assuming $\zeta$ is an hyperbolic periodic point (in our case a repelling periodic point), we let $\Phi_\zeta:V\to W$ denote a diffeomorphism, defined on an open ball, $V$, around $\zeta$ in $T_\zeta\X$ onto a neighbourhood, $W$, of $\zeta$ in $\X$, such that $\Phi_\zeta(E^{s,u}\cap V)=W^{s,u}(\zeta)\cap W$.

In what follows we will use the notation $|\cdot|$  and $\leb(\cdot)$ for Lebesgue measure, \ie for every borelian set $A$, we write $|A|=\leb(A)$ for the Lebesgue measure of $A$.
\begin{definition}
\label{def:multi-dimensional-REPP}
We define an empirical multi-dimensional REPP process $N_n^*:\X\to M_p([0,\infty)\times T_\zeta\X)$ by $$\displaystyle
N_n^*=\sum_{j=0}^{\infty} \delta_{\left(j/n, \frac{\Phi_\zeta^{-1}(T^j(x))}{g^{-1}(u_n(1))|B_1(\zeta)|^{1/d}}\right)}\;.
$$
\end{definition}
In the absence of clustering, like when $\zeta$ is not periodic, the limiting process for $N_n^*$ is a multi-dimensional homogeneous Poisson process with intensity measure given by Lebesgue measure, which can be written as:
\begin{equation}
\label{eq:repelling-limit-REPP-multi-d-no-clustering}
N^*=\sum_{i,j=1}^\infty\delta_{(T_{i,j},\, U_{i,j})},
\end{equation}
where  $T_{i,j}$ is given by \eqref{T-matrix}, for $\bar T_{i,j}\stackrel[]{D}{\sim}\text{Exp}\left(\leb\left(B_{i}(\zeta)\setminus B_{i-1}(\zeta)\right)\right)$, with $B_{i}(\zeta)$ denoting the ball of radius $i\in\N$ around $\zeta$ on $T_\zeta\X$, $B_{0}(\zeta)=\emptyset$; $U_{i,j}\stackrel[]{D}{\sim} \mathcal U_{B_{i}(\zeta)\setminus B_{i-1}(\zeta)}$, which means that $U_{i,j}$ is a point of  $B_{i}(\zeta)\setminus B_{i-1}(\zeta)$ selected randomly according to normalised $d$-dimensional Lebesgue measure on $T_\zeta\X$, \ie $\leb/(\leb(B_{i}(\zeta)\setminus B_{i-1}(\zeta)))$.  As before, $(\bar T_{i,j})_{i,j\in\N}$ and $(U_{i,j})_{i,j\in\N}$ are mutually independent.

In the presence of clustering, we will show that, under the same assumptions on the regularity of $\mu$ with respect to Lebesgue measure, as before, then there exists some $0<\theta<1$ such that the limiting process can be written as:
\begin{equation}
\label{eq:repelling-limit-REPP-multi-d}
N_\theta^*=\sum_{i,j=1}^\infty\sum_{\ell=0}^{\infty} \delta_{(T_{i,j},\,DT_\zeta^{\ell p}( U_{i,j}))},
\end{equation}
where the variables  $T_{i,j}$ and $U_{i,j}$ are as described above, except for $\bar T_{i,j}$, which is now such that $\bar T_{i,j}\stackrel[]{D}{\sim}\text{Exp}\left(\theta\leb\left(B_{i}(\zeta)\setminus B_{i-1}(\zeta)\right)\right)$, instead. In the case of $\zeta$ being a repelling periodic point, we will see that $\theta=1-|\det^{-1}(DT^p_\zeta)|$.

\section{General complete convergence of multi-dimensional empirical REPP}
The main goal is to study time series arising from discrete dynamical systems as described in \eqref{eq:SP-def}. Yet, as the results we need to develop apply to general stationary stochastic processes, in this section, we identify $X_0, X_1, \ldots$  with the respective coordinate-variable process on $(\mathcal S^{\N_0}, \mathcal B^{\N_0}, \p)$, given by Kolmogorv's existence theorem, where $\mathcal S=\R^d$ and  $\mathcal B^{\N_0}$ is the $\sigma$-field generated by the coordinate functions $Z_n:\mathcal S^{\N_0}\to\mathcal S$, with $Z_n(x_0,x_1,\ldots)=x_n$, for $n\in\N_0$, so that there is a natural measurable map, the shift operator $\TF: \mathcal S^{\N_0}\to\mathcal S^{\N_0}$, given by $\TF(x_0,x_1,\ldots)=(x_1,x_2,\ldots)$, which when applied later in the dynamical systems context can be identified with the action of time on the system. Except for the multi-dimensional processes considered in Section~\ref{subsubsec:multi-dimensional}, we always consider the case $d=1$.

Note that, under these identifications, we can write:
\[
X_{i-1}\circ \TF =X_{i}, \quad \mbox{for all $i\in\N$}.
\]
Since, we assume that the process is stationary, then $\p$ is $\TF$-invariant. Note that $X_i=X_0\circ \TF^i$, for all $i\in\N_0$, where $\TF^i$ denotes the $i$-fold composition of $\TF$, with the convention that $\TF^0$ denotes the identity map on $\R^\N$. 

In what follows, for every $A\in\mathcal B$, we denote the complement of $A$ as $A^c:=\mathcal X\setminus A$.

Let $A\in\mathcal B$ be an event and let $J$ be an interval contained in $[0,\infty)$. We define
\begin{equation}
\label{eq:W-def}
\mathscr W_{J}(A)=\bigcap_{i\in J\cap \N_0}\TF^{-i}(A^c).
\end{equation}

We will write $\mathscr W_{J}^c(A):=(\mathscr W_{J}(A))^c$.

Consider the event $A\in\mathcal B$ and define, for some $j\in \N$,  
\begin{equation} 
\label{eq:A^j}
A^{(j)}:=A\cap \TF^{-1}(A^c)\cap\ldots\cap\TF^{-j}(A^c),
\end{equation} and, for $j=0$, we simply define $A^{(0)}=A$.

Let $(A_n)_{n\in\N}$ be a sequence of events in $\mathcal B$. For each $n\in\N$, let $R_{n}^{(j)}=\min\{r\in\N: A_n^{(j)}\cap \TF^{-r}A_{n}^{(j)}\neq\emptyset\}$.
We assume that there exists $q\in\N_0$ such that:
\begin{equation}
\label{def:q}
q=\min\left\{j\in\N_0:\lim_{n\to\infty} R_{n}^{(j)}=\infty\right\}.
\end{equation}

\subsection{Dependence conditions}
\label{subsec:dependence}
In what follows, for some $a>0$, $y\in\R^d$ and a set $A\subset \R^d$, we define the sets $aA\subset \R^d$ and $A+y$, as $aA=\{ax:\, x\in A\}$ and $A+y=\{x+y:\, x\in A\}$, respectivley.
We introduce a mixing condition which is specially designed for the application to the dynamical setting. 

Let $E=\cup_{k=1}^m E_k$, where for each $k=1, \ldots,m\in\N$,
\begin{equation}
\label{eq:rectangles}
E_k=J_k\times A_k,\; J_k=[a_k,b_k),\; \displaystyle A_k=\bigcup_{s=1}^{\varsigma_k} G_{k,s},\; \mbox{where}
\end{equation} 
\begin{equation}
\label{eq:rectangles-cases}
G_{k,s}=(\tau_{k,2s-1}, \tau_{k,2s}]\;\quad\mbox{or}\;\quad G_{k,s}=(e_{k,s,1}, f_{k,s,1}]\times\ldots\times(e_{k,s,d}, f_{k,s,d}],
\end{equation}
for $0\leq a_1<b_1\leq a_2<b_2\leq\ldots\leq a_k<b_k$ and $G_{k,i}\cap G_{k,j}=\emptyset$ for $i\neq j=1, \ldots,\varsigma_k$, which in the first case means that  $0\leq \tau_{k,1}< \tau_{k,2}<\ldots<\tau_{k,2\varsigma_k-1}< \tau_{k,2\varsigma_k}$.% and in the second case, 
%$e_{k,1,i}< f_{k,1,i}<\ldots<e_{k,\varsigma_k,i}< f_{k,\varsigma_k,i}\in\R$, for all $i=1,\ldots,d$.  

To study the two-dimensional empirical REPP introduced in Section~\ref{subsubsec:2d-REPP}, for each  $n\in\N$, we define 
\begin{equation}
\label{eq:rectangles-n}
J_{n,k}=nJ_k,\; G_{n,k,s}=\{u_n(\tau_{k,2s})<X_0\leq u_n(\tau_{k,2s-1})\},\; A_{n,k}=\bigcup_{s=1}^{\varsigma_k} G_{n,k,s}\;\mbox{and}\; E_{n,k}=J_{n,k}\times A_{n,k},
\end{equation}
where $u_n(\tau)$ is defined as in \eqref{un}.

To study the multi-dimensional empirical REPP introduced in Section~\ref{subsubsec:multi-dimensional}, for each  $n\in\N$, we define 
\begin{equation}
\label{eq:rectangles-n-bi}
J_{n,k}=nJ_k,\;  A_{n,k}=\Phi_\zeta(g^{-1}(u_n(1))|B_1(\zeta)|^{1/d}A_{k})\;\mbox{and}\; E_{n,k}=J_{n,k}\times A_{n,k},
\end{equation}
where $u_n(1)$ is defined as in \eqref{un}.

\begin{condition}[$\D_q^*(u_n)$]\label{cond:D*} We say that $\D_q^*(u_n)$ holds for the sequence $X_0,X_1,\ldots$ if for every  $m, t,n\in\N$ and every $J_k$ and $A_k$, with $k=1, \ldots, m$, chosen as in \eqref{eq:rectangles}, we have 
\begin{equation}\label{eq:D1}
\left|\p\left(\A_{n,k}\cap \bigcap_{i=k}^m\W_{J_{n,i}}\left(\A_{n,i}\right) \right)-\p\left(\A_{n,k}\right)
  \p\left( \bigcap_{i=k}^m\W_{J_{n,i}}\left(\A_{n,i}\right)\right)\right|\leq \gamma(q,n,t),
\end{equation}
where, for $k=1,\ldots,n$, each $J_{n,k}$ and $A_{n,k}$ is associated to $J_k$ and $A_k$, respectively, as in \eqref{eq:rectangles-n} or \eqref{eq:rectangles-n-bi}, $\A_{n,k}$ is associated to $A_{n,k}$ by \eqref{eq:A^j}, $\min\{J_{n,k}\cap\N_0\}\geq t$ and $\gamma(q,n,t)$ is decreasing in $t$ for each $n$ and there exists a sequence $(t_n)_{n\in\N}$ such that $t_n=o(n)$ and
$n\gamma(q,n,t_n)\to0$ when $n\rightarrow\infty$.
\end{condition}

For some fixed $q\in\N_0$, consider the sequence $(t_n)_{n\in\N}$, given by condition  $\D_q(u_n)^*$ and let $(k_n)_{n\in\N}$ be another sequence of integers such that 
\begin{equation}
\label{eq:kn-sequence}
k_n\to\infty\quad \mbox{and}\quad  k_n t_n = o(n).
\end{equation}

\begin{condition}[$\D'_q(u_n)$]\label{cond:D'q} We say that $\D'_q(u_n)$
holds for the sequence $X_0,X_1,X_2,\ldots$ if there exists a sequence $(k_n)_{n\in\N}$ satisfying \eqref{eq:kn-sequence} and such that,  for every $m$ and every $A_k$, with $k=1, \ldots, m$, chosen as in \eqref{eq:rectangles}, we have
\begin{equation}
\label{eq:D'rho-un}
\lim_{n\rightarrow\infty}\,n\sum_{j=1}^{\lfloor n/k_n\rfloor-1}\p\left( \A_{n,k}\cap \TF^{-j}\left(\A_{n,k}\right)
\right)=0,
\end{equation}
where, for $k=1,\ldots,n$, each $A_{n,k}$ is associated to $A_k$, as in \eqref{eq:rectangles-n} or \eqref{eq:rectangles-n-bi}, and $\A_{n,k}$ is associated to $A_{n,k}$ by \eqref{eq:A^j}
\end{condition}

\subsection{The stacking distribution}

We assume the existence of a $\sigma$-finite outer measure $\nu$ on the positive real line so that the following limits exist for every $k=1,\ldots,m$
\begin{equation}
\label{eq:limit-Anq}
\lim_{n\to\infty} n\p(\A_{n,k})=\nu(A_k),
\end{equation}
where $\A_{n,k}$  is given by equation \eqref{eq:rectangles-n} or \eqref{eq:rectangles-n-bi} and $A_k$ by \eqref{eq:rectangles}. The role played by the outer measure $\nu$ is crucial for determining the distribution of the mass points of the limiting process that get aligned at the same Poissonian time event and, ultimately, for determining the limiting process itself.

In the absence of clustering, \ie if $\D'_0(u_n)$ holds then $A_{n,k}^{(0)}=A_{n,k}$ and, by \eqref{un}, we have that $\lim_{n\to\infty}n\p(G_{n,k,s})=\lim_{n\to\infty}n\left(\p(X_0>u_n(\tau_{k,2s}))-\p(X_0>u_n(\tau_{k,2s-1}))\right)=|G_{k,s}|$. Therefore, in this case, we have $\nu=\leb$ and the 2-dimensional empirical REPP $N_n$ given in Definition~\ref{def:REPP-2d} converges to a 2-dimensional Poisson process given in \eqref{Poisson-2d}.

In the setting of Section~\ref{subsubsec:2d-DS}, when the limiting process is given by \eqref{eq:repelling-limit-REPP-2d}, we expect that 
\begin{equation}
\lim_{n\to\infty} n\p(A_{n,k})=\lim_{n\to\infty} n\p\left(\bigcup_{s=1}^{\varsigma_k}G_{n,k,s}\right)=\sum_{s=1}^{\varsigma_k}|G_{k,s}|=|A_k|
\end{equation}
where $G_{n,k,s}$ and $G_{k,s}$ are given by equations \eqref{eq:rectangles-n} and \eqref{eq:rectangles}, respectively. Moreover,
\begin{equation*}
\lim_{n\to\infty} n\p(\A_{n,k})=%\lim_{n\to\infty} n\p\left(\bigcup_{s=1}^{\varsigma_k}G_{n,k,s}^{(q)}\right)=
\sum_{s=1}^{\varsigma_k}\big|G_{k,s}\setminus\bigcup_{s=1}^{\varsigma_k}\bigcup_{j=1}^\infty(1-\theta)^j G_{k,s}\big|=\big|A_k\setminus\bigcup_{j=1}^\infty(1-\theta)^j A_{k}\big|,
\end{equation*}
where $\theta=1-\alpha^{-d}$.
%where $\alpha G_{k,s}=(\alpha\tau_{k,2s-1}, \alpha\tau_{k,2s}]$ and $\alpha A_k=\cup_{s=1}^{\varsigma_k}\alpha G_{k,s}$, for every $\alpha>0$. 
In this case $\nu$ is given by 
\begin{equation}
\label{eq:outer-measure-ex-1}
\nu(A)=\Bigg|A\setminus\bigcup_{j=1}^\infty(1-\theta)^jA\Bigg|.
\end{equation}

\begin{remark}
Observe that when $A_k=[0,\tau)$ then $A_k\setminus(1-\theta)A_k=[(1-\theta)\tau,\tau)$ and $|A_k\setminus(1-\theta)A_k|=\theta|A_k|=\theta\tau.$ Moreover, in this case where $A_k=[0,\tau)$, we can also write that $\theta=\lim_{n\to\infty}\theta_n$, where
\[\theta_n=\frac{\p(\A_{n,k})}{\p(A_{n,k})}.\]
\end{remark}

In the setting of Section~\ref{subsubsec:multi-dimensional}, using uniform magnification of Lebesgue measure, we have (see equation \eqref{eq:normalisation} below)
$
\lim_{n\to\infty} n\p(A_{n,k})=%\lim_{n\to\infty} nC|g^{-1}(u_n(1))A_k|=\lim_{n\to\infty} n\left(g^{-1}(u_n(1))\right)^dC|A_k|=c
|A_k|.
$ Furthermore,
$
\lim_{n\to\infty} n\p(\A_{n,k})=\big|A_k\setminus\bigcup_{j=1}^\infty DT_\zeta^{-j}( A_{k})\big|,
$
so that, in this case, $\nu$ is given by 
\begin{equation}
\label{eq:outer-measure-ex-2}
\nu(A)=\Bigg|A\setminus\bigcup_{j=1}^\infty DT_\zeta^{-j}(A)\Bigg|.
\end{equation}

\subsection{Complete convergence}\label{subsubsec:complete-convergence}
The cornerstone of the main convergence result in this section is the following proposition that essentially asserts that the non-occurrence of the asymptotically rare event $A_{n,k}$ during a conveniently normalised time frame can be replaced by the non-occurrence of the event $\A_{n,k}$ on the same normalised time frame, up to an asymptotically negligible error. This idea goes back to \cite[Proposition~1]{FFT12} and was further elaborated in \cite[Proposition 2.7]{FFT15}.

\begin{proposition}
\label{prop:relation-balls-annuli-general} Observe that
\[\left|\p\left(\bigcap_{k=1}^m\W_{J_{n,k}}\left(\A_{n,k}\right)\right)-\p\left(\bigcap_{k=1}^m\W_{J_{n,k}}\left(A_{n,k}\right)\right)\right|\leq q\sum_{k=1}^m\p(A_{n,k}).\]
\end{proposition}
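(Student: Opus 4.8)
The plan is to turn the estimate into a purely combinatorial statement about the last occurrence of the ``ball'' event inside each time window. First I would record the trivial inclusion $\A_{n,k}\subseteq A_{n,k}$ coming from \eqref{eq:A^j}; complementing and applying $\TF^{-i}$ reverses it to $\TF^{-i}(A_{n,k}^c)\subseteq\TF^{-i}(\A_{n,k}^c)$, so intersecting over $i\in J_{n,k}\cap\N_0$ gives $\W_{J_{n,k}}(A_{n,k})\subseteq\W_{J_{n,k}}(\A_{n,k})$, and hence $\bigcap_{k=1}^m\W_{J_{n,k}}(A_{n,k})\subseteq\bigcap_{k=1}^m\W_{J_{n,k}}(\A_{n,k})$. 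In particular the quantity inside the absolute value is non-negative, so the modulus may be dropped. Writing $B_k=\W_{J_{n,k}}(\A_{n,k})$ and $C_k=\W_{J_{n,k}}(A_{n,k})$ with $C_k\subseteq B_k$, I would then swap the factors one at a time: interpolating between $\bigcap_k B_k$ and $\bigcap_k C_k$, the difference telescopes into $\sum_{k=1}^m[\p(\cdots\cap B_k\cap\cdots)-\p(\cdots\cap C_k\cap\cdots)]$, and each summand is at most $\p(B_k\setminus C_k)$ because replacing $B_k$ by the smaller set $C_k$ inside a fixed intersection can only discard mass lying in $B_k\setminus C_k$.

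This reduces everything to the single-window bound $\p(\W_{J_{n,k}}(\A_{n,k})\setminus\W_{J_{n,k}}(A_{n,k}))\leq q\,\p(A_{n,k})$, which is the crux. To prove it I would fix $x$ in this set difference, so that $A_{n,k}$ occurs at some integer time of $J_{n,k}$ while the annulus event $\A_{n,k}=A_{n,k}^{(q)}$ never does, and look at the \emph{largest} time $i^\ast\in J_{n,k}\cap\N_0$ at which $A_{n,k}$ occurs. Since $A_{n,k}$ holds at $i^\ast$ but $A_{n,k}^{(q)}$ fails there, the defining intersection \eqref{eq:A^j} forces $\TF^{i^\ast+r}(x)\in A_{n,k}$ for some $r\in\{1,\dots,q\}$; by maximality of $i^\ast$ this later occurrence cannot lie in $J_{n,k}$, and since $J_{n,k}=nJ_k$ is an interval every integer strictly between $i^\ast$ and the right endpoint of $J_{n,k}$ already lies in $J_{n,k}$, so the recurrence must sit beyond that endpoint. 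This pins $i^\ast$ into the final $q$ integer slots of $J_{n,k}$, whence the set difference is contained in a union of at most $q$ sets $\TF^{-i}(A_{n,k})$; stationarity ($\TF$-invariance of $\p$) then bounds its probability by $q\,\p(A_{n,k})$.

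Summing the telescoped terms yields $\sum_{k=1}^m\p(B_k\setminus C_k)\leq q\sum_{k=1}^m\p(A_{n,k})$, which is exactly the claimed inequality. I expect the only delicate point to be the bookkeeping in the last-occurrence argument: one must verify that the recurrence time $i^\ast+r$, being a non-negative integer strictly to the right of $i^\ast$ and outside the half-open interval $J_{n,k}$, necessarily exceeds its right endpoint rather than falling into an interior gap, and that this confines $i^\ast$ to at most $q$ positions. Here only the interval structure of $J_k=[a_k,b_k)$ and the fact that $q$ is a \emph{fixed} finite window in $A^{(q)}$ are used; the particular value singled out in \eqref{def:q} plays no role in this proposition.
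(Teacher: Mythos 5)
Your proposal is correct and follows essentially the same route as the paper: the key step in both is the last-occurrence (maximality) argument showing that if the window avoids $\A_{n,k}$ but not $A_{n,k}$, then the final visit to $A_{n,k}$ is forced into the last $q$ integer slots of $J_{n,k}$, after which stationarity gives the bound $q\,\p(A_{n,k})$ per window. The only cosmetic difference is that you reduce to single windows by telescoping over $k$, whereas the paper bounds the set difference directly by the union $\bigcup_{k}\bigcup_{j=1}^q \TF^{-(nb_k-j)}(A_{n,k}\setminus\A_{n,k})$; both yield the same estimate.
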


We are now ready to state a general complete convergence result.

\begin{theorem}\label{thm:indep-increments} 
Let $m\in\N$ and for each $k=1,\ldots,m$ let $E_k,\; J_k,\;A_k$ be given, as in  \eqref{eq:rectangles}, and $G_{k,s}$, as in \eqref{eq:rectangles-cases}, with $s=1, \ldots, \varsigma_k$ . For $n\in\N$, consider the respective versions $E_{n,k},\; J_{n,k},\;A_{n,k}$ and $G_{n,k,s}$ given by \eqref{eq:rectangles-n} or \eqref{eq:rectangles-n-bi}. Let $q$ be as in \eqref{def:q} and assume that conditions $\D_q^*(u_n)$ and $\D'_q(u_n)$ hold. Also assume that there exists a $\sigma$-finite outer measure $\nu$ on $\R^+$ such that \eqref{eq:limit-Anq} holds. Hence,
\[
\lim_{n\to\infty}\p\left(\bigcap_{k=1}^m\W_{J_{n,k}}\left(A_{n,k}\right)\right)=\lim_{n\to\infty}\p\left(\bigcap_{k=1}^m\W_{J_{n.k}}\left(\A_{n,k}\right)\right)=\prod_{k=1}^m \e^{-\nu(A_k)|J_k|}.
\]
\end{theorem}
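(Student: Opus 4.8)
The first equality is essentially free. Proposition~\ref{prop:relation-balls-annuli-general} bounds the difference of the two probabilities by $q\sum_{k=1}^m\p(A_{n,k})$, and the normalisation \eqref{un} (together with its multi-dimensional analogue) gives $n\p(A_{n,k})\to|A_k|$, whence $\p(A_{n,k})=O(1/n)$ and $q\sum_{k=1}^m\p(A_{n,k})\to0$ since $m$ and $q$ are fixed. Thus the two sequences of probabilities have a vanishing difference, and it suffices to prove that the $\A_{n,k}$-version converges to $\prod_{k=1}^m\e^{-\nu(A_k)|J_k|}$; the $A_{n,k}$-version then has the same limit.

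For the second equality I would run a Leadbetter-type blocking argument directly on the escape events $\A_{n,k}$. Let $(t_n)$ be the sequence from $\D_q^*(u_n)$ and $(k_n)$ the sequence from \eqref{eq:kn-sequence}. The plan is to cut each time window $J_{n,k}\cap\N_0$ into $k_n$ blocks of length $\sim |J_{n,k}|/k_n$ separated by gaps of length $t_n$. Deleting the gaps changes the probability by at most the chance of seeing $\A_{n,k}$ somewhere in a gap, which is bounded by (total gap length)$\times\max_k\p(\A_{n,k})\le C\,mk_nt_n/n=o(1)$ because $k_nt_n=o(n)$. Hence I may replace $\bigcap_{k=1}^m\W_{J_{n,k}}(\A_{n,k})$ by the intersection over the $\sim mk_n$ surviving blocks of the corresponding events $\W_B(\A_{n,k})$ (with $k$ the window containing $B$), at negligible cost.

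Two substantive steps remain: decoupling the blocks and counting inside one block. For the count, a Bonferroni estimate gives, for a block $B\subset J_{n,k}$ of length $\le\lfloor n/k_n\rfloor$, that $0\le|B|\p(\A_{n,k})-\p(\W_B^c(\A_{n,k}))\le|B|\sum_{r=1}^{\lfloor n/k_n\rfloor-1}\p(\A_{n,k}\cap\TF^{-r}\A_{n,k})$ by stationarity; summed over the $k_n$ blocks of $J_{n,k}$ this error is at most $n\sum_{r=1}^{\lfloor n/k_n\rfloor-1}\p(\A_{n,k}\cap\TF^{-r}\A_{n,k})$, which vanishes by $\D'_q(u_n)$ \eqref{eq:D'rho-un}. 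For the decoupling I would peel off one block at a time: writing $\W_B^c(\A_{n,k})=\bigcup_{i\in B}\TF^{-i}\A_{n,k}$, expanding via the same Bonferroni bound, shifting each single-occurrence term $\TF^{-i}\A_{n,k}\cap(\text{later blocks})$ back to time $0$ by stationarity, and applying $\D_q^*(u_n)$ \eqref{eq:D1} (the later blocks then starting after a gap $\ge t_n$). Each application costs $\gamma(q,n,t_n)$, so the total decoupling error is $O\!\big(n\,\gamma(q,n,t_n)\big)$ plus the $\D'_q(u_n)$ overlap just discussed, and $n\gamma(q,n,t_n)\to0$ by hypothesis. This reduces the probability to $\prod_B\p(\W_B(\A_{n,k}))$ (with $k=k(B)$) up to $o(1)$. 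I expect this decoupling --- in particular the bookkeeping that turns ``no occurrence in a block'' into the single-occurrence form on which $\D_q^*(u_n)$ is phrased, while keeping all error terms uniform over the $\sim mk_n$ blocks --- to be the main obstacle.

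Finally I would assemble the estimate. Writing $x_B:=\p(\W_B^c(\A_{n,k}))$, the two steps give $\max_B x_B\le\lfloor n/k_n\rfloor\p(\A_{n,k})\sim\nu(A_k)/k_n\to0$ and, summing over the blocks of $J_{n,k}$, $\sum_{B\subset J_{n,k}}x_B\to|J_k|\,\nu(A_k)$ by \eqref{eq:limit-Anq} (the total block length being $\sim n|J_k|$ and $n\p(\A_{n,k})\to\nu(A_k)$). Since $\max_B x_B\to0$ with $\sum_B x_B$ bounded, $\prod_{B\subset J_{n,k}}(1-x_B)=\exp\!\big(-\sum_{B\subset J_{n,k}}x_B+o(1)\big)\to\e^{-\nu(A_k)|J_k|}$, and grouping the factors by window and multiplying over $k=1,\ldots,m$ gives $\prod_{k=1}^m\e^{-\nu(A_k)|J_k|}$, as required.
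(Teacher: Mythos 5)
Your proposal is correct and follows essentially the same route as the paper: the first equality via Proposition~\ref{prop:relation-balls-annuli-general} together with $\p(A_{n,k})=O(1/n)$, and the second via a Leadbetter-type blocking argument with gaps of length $t_n$, a Bonferroni count inside each block controlled by $\D'_q(u_n)$, decoupling across the gaps by $\D_q^*(u_n)$ at total cost $O(n\gamma(q,n,t_n))$, and the final passage $\prod(1-x_B)\to\prod_k\e^{-\nu(A_k)|J_k|}$ using \eqref{eq:limit-Anq}. The paper packages the gap-deletion and block-peeling steps as Lemmas~\ref{lem:time-gap-1} and~\ref{lem:inductive-step-1} and runs the induction over blocks and windows exactly as you describe, so the bookkeeping you flag as the main obstacle is precisely what those lemmas carry out.
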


\begin{remark}
Note that using the right definition for $A_k$ and $A_{n,k}$, we have $\p(N_n(J_k\times A_k)=0)=\p\left(\bigcap_{k=1}^m\W_{J_{n,k}}\left(A_{n,k}\right)\right)$ and $\p(N^*_n(J_k\times A_k)=0)=\p\left(\bigcap_{k=1}^m\W_{J_{n,k}}\left(A_{n,k}\right)\right)$.
\end{remark}

Observe that the first equality in Theorem~\ref{thm:indep-increments} follows from the definition of $A_{n,k}$ and Proposition~\ref{prop:relation-balls-annuli-general}, whose proof we start with.  

\begin{proof}[Proof of Proposition~\ref{prop:relation-balls-annuli-general}]
Since $\A_{n,k}\subset A_{n,k}$, then clearly $\bigcap_{k=1}^m\W_{J_{n,k}}(A_{n,k})\subset\bigcap_{k=1}^m\W_{J_{n,k}}(\A_{n,k})$. Hence, we have to estimate the probability of $\bigcap_{k=1}^m\W_{J_{n,k}}(\A_{n,k})\setminus\bigcap_{k=1}^m\W_{J_{n,k}}(A_{n,k})$.

Let $x\in\bigcap_{k=1}^m\W_{J_{n,k}}(\A_{n,k})\setminus\bigcap_{k=1}^m\W_{J_{n,k}}(A_{n,k})$. Then, $\TF^i(x)\in A_{n,k}$ for some $i\in J_{n,k}=[n a_k,n b_k)$ and $k\in\{1,\ldots,m\}$. For that $k$, we will see that there exists $j\in\{1,\ldots,q\}$ such that $\TF^{n b_k-j}(x)\in A_{n,k}$. In fact, suppose that no such $j$ exists. Let $\ell=\max\{i\in J_{n,k}:\, \TF^i(x)\in A_{n,k}\}$. Then, clearly, $\ell<n b_k-q$. Hence, if $\TF^i(x)\notin A_{n,k}$, for all $i=\ell+1,\ldots,n b_k-1$, then we must have that $\TF^{\ell}(x)\in\A_{n,k}$. But this contradicts the fact that $x\in\W_{J_{n,k}}(\A_{n,k})$. Consequently, we have that there exists $j\in\{1,\ldots,q\}$ such that $\TF^{n b_k-j}(x)\in A_{n,k}$ and since $x\in\W_{J_{n,k}}(\A_{n,k})$ then we can actually write $\TF^{n b_k-j}(x)\in A_{n,k}\setminus\A_{n,k}$.

This means that $\bigcap_{k=1}^m\W_{J_{n,k}}(\A_{n,k})\setminus\bigcap_{k=1}^m\W_{J_{n,k}}(A_{n,k})\subset \bigcup_{k=1}^m\bigcup_{j=1}^q \TF^{-(n b_k-j)}(A_{n,k}\setminus\A_{n,k})$ and then
\begin{multline*}
\left|\p\left(\bigcap_{k=1}^m\W_{J_{n,k}}\left(\A_{n,k}\right)\right)-\p\left(\bigcap_{k=1}^m\W_{J_{n,k}}\left(A_{n,k}\right)\right)\right|=\p\left(\bigcap_{k=1}^m\W_{J_{n,k}}(\A_{n,k})\setminus\bigcap_{k=1}^m\W_{J_{n,k}}(A_{n,k})\right)\\
\leq\p\left(\bigcup_{k=1}^m\bigcup_{j=1}^q \TF^{-(n b_k-j)}\left(A_{n,k}\setminus\A_{n,k}\right)\right)\leq\sum_{k=1}^m\sum_{j=1}^q \p\left(A_{n,k}\setminus\A_{n,k}\right)\leq q\sum_{k=1}^m\p(A_{n,k}),
\end{multline*}
as required.
\end{proof}

Before going to the proof of Theorem~\ref{thm:indep-increments}, we state and prove some auxiliary lemmata, in which we will use the following notation $
\mathscr W_{s,\ell}(A):=\mathscr W_{[\lfloor s\rfloor,\lfloor s\rfloor+\max\{\lfloor\ell\rfloor-1,\ 0\}]}(A)
$
and  $\mathscr W_{s,\ell}^c(A):=(\mathscr W_{s,\ell}(A))^c$.
\begin{lemma}
\label{lem:time-gap-1}
For any fixed $A\in \mathcal B$ and $s,t',m\in\N$, %with $t< m$,
we have:
\begin{equation*}
\left|\p(\mathscr W_{0,s+t'+m}(A))-\p(\mathscr W_{0,s}(A)\cap \mathscr W_{s+t',m}(A))\right|\leq t'\p(A).
\end{equation*}
\end{lemma}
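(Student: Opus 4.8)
The plan is to exploit the fact that the two events being compared differ only in a constraint imposed on one short block of $t'$ consecutive times. First I would unwind the definitions, using that $s,t',m\in\N$ makes every floor trivial, to rewrite the three relevant events as intersections over explicit index blocks: $\W_{0,s+t'+m}(A)=\bigcap_{i=0}^{s+t'+m-1}\TF^{-i}(A^c)$, $\W_{0,s}(A)=\bigcap_{i=0}^{s-1}\TF^{-i}(A^c)$, and $\W_{s+t',m}(A)=\bigcap_{i=s+t'}^{s+t'+m-1}\TF^{-i}(A^c)$.

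Next I would observe that $\W_{0,s+t'+m}(A)$ forbids a visit to $A$ on the entire block $\{0,\ldots,s+t'+m-1\}$, whereas $\W_{0,s}(A)\cap\W_{s+t',m}(A)$ forbids it only on $\{0,\ldots,s-1\}\cup\{s+t',\ldots,s+t'+m-1\}$, leaving the gap block $\{s,\ldots,s+t'-1\}$ --- which contains exactly $t'$ integers --- unconstrained. This yields the inclusion $\W_{0,s+t'+m}(A)\subset\W_{0,s}(A)\cap\W_{s+t',m}(A)$, so the absolute value in the statement collapses to the probability of the set difference $\big(\W_{0,s}(A)\cap\W_{s+t',m}(A)\big)\setminus\W_{0,s+t'+m}(A)$.

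The final step identifies this difference: a point belongs to it precisely when $A$ is avoided on both outer blocks but entered at least once during the gap, so the difference is contained in $\bigcup_{i=s}^{s+t'-1}\TF^{-i}(A)$. Applying the union bound and then stationarity of $\p$ under $\TF$, which gives $\p(\TF^{-i}(A))=\p(A)$ for every $i$, bounds this probability by $\sum_{i=s}^{s+t'-1}\p(A)=t'\p(A)$, as required. I do not expect any genuine obstacle here: the argument is entirely combinatorial bookkeeping of index ranges --- verifying that the two outer blocks abut the gap correctly and that the gap holds exactly $t'$ times --- together with a routine appeal to stationarity. The only point demanding slight care is the floor-function convention in the definition of $\W_{s,\ell}$, but since all three arguments are positive integers this causes no difficulty.
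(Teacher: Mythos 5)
Your proposal is correct and follows essentially the same route as the paper: both identify the set difference as the event that $A$ is entered somewhere in the gap block $\{s,\ldots,s+t'-1\}$ while the outer blocks are avoided, and then bound its probability by $t'\p(A)$ via the union bound and stationarity. The only cosmetic difference is that the paper first shifts the whole gap event to $\mathscr W_{0,t'}^c(A)$ by stationarity and then union-bounds, whereas you union-bound first and apply stationarity termwise.
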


\begin{proof}
Using stationarity we have
\begin{align*}
\p(\mathscr W_{0,s}(A)\cap \mathscr W_{s+t',m}(A))-\p(\mathscr W_{0,s+t'+m}(A))&=\p(\mathscr W_{0,s}(A)\cap \mathscr W_{s,t'}^c(A)\cap\mathscr W_{s+t',m}(A))\\&\leq \p(\mathscr W_{0,t'}^c(A))=\p(\cup_{j=0}^{t'-1} \TF^{-j}(A))\\
& \leq \sum_{j=0}^{t'-1}\p(\TF^{-j}(A))=t'\p(A).
\end{align*}
\end{proof}

\begin{lemma}
\label{lem:inductive-step-1}For any fixed $A\in\mathcal B$ and integers $s,t,m$, we have:
\begin{multline*}
\left|\p(\mathscr W_{0,s}(A)\cap \mathscr W_{s+t,m}(A))-(1-s\p(A))\p(\mathscr W_{0,m}(A))\right|\leq\\
\left|s\p(A)\p(\mathscr W_{0,m}(A))-\sum_{j=0}^{s-1}\p(A\cap \mathscr W_{s+t-j,m}(A))\right|+s\sum_{j=1}^{s-1}\p(A\cap \TF^{-j}(A)).
\end{multline*}
\end{lemma}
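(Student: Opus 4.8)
The plan is to reduce the whole estimate to a single exact algebraic identity and then bound the two resulting error terms separately. First I would use stationarity (the $\TF$-invariance of $\p$) to observe that $\p(\mathscr W_{s+t,m}(A))=\p(\mathscr W_{0,m}(A))$. Abbreviating $W_1:=\mathscr W_{s+t,m}(A)$ and using the complement identity $\mathscr W_{0,s}^c(A)=\bigcup_{j=0}^{s-1}\TF^{-j}(A)$, I would decompose
\[
\p(\mathscr W_{0,s}(A)\cap W_1)=\p(W_1)-\p\Big(\bigcup_{j=0}^{s-1}\TF^{-j}(A)\cap W_1\Big),
\]
which, after substituting $\p(W_1)=\p(\mathscr W_{0,m}(A))$, yields the exact identity
\[
\p(\mathscr W_{0,s}(A)\cap W_1)-(1-s\p(A))\p(\mathscr W_{0,m}(A))=s\p(A)\p(\mathscr W_{0,m}(A))-\p\Big(\bigcup_{j=0}^{s-1}B_j\Big),
\]
where $B_j:=\TF^{-j}(A)\cap W_1$. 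The triangle inequality then splits the left-hand side of the lemma into $\big|s\p(A)\p(\mathscr W_{0,m}(A))-\sum_{j=0}^{s-1}\p(B_j)\big|$ plus $\big|\sum_{j=0}^{s-1}\p(B_j)-\p(\bigcup_j B_j)\big|$.

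Next I would compute each $\p(B_j)$ exactly by stationarity. The key observation is the shift identity $\TF^{-j}\big(A\cap\mathscr W_{s+t-j,m}(A)\big)=\TF^{-j}(A)\cap\mathscr W_{s+t,m}(A)=B_j$, which follows by expanding $\mathscr W_{s+t-j,m}(A)$ as an intersection of preimages $\TF^{-i}(A^c)$ and re-indexing $i\mapsto i+j$. Invariance of $\p$ then gives $\p(B_j)=\p\big(A\cap\mathscr W_{s+t-j,m}(A)\big)$, so that $\sum_{j=0}^{s-1}\p(B_j)=\sum_{j=0}^{s-1}\p\big(A\cap\mathscr W_{s+t-j,m}(A)\big)$ reproduces precisely the first term appearing on the right-hand side of the lemma.

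For the second error term I would invoke the Bonferroni-type bound $\big|\sum_{j}\p(B_j)-\p(\bigcup_j B_j)\big|\leq\sum_{0\leq i<j\leq s-1}\p(B_i\cap B_j)$. Since $B_i\cap B_j\subset\TF^{-i}(A)\cap\TF^{-j}(A)$, stationarity yields $\p(B_i\cap B_j)\leq\p\big(A\cap\TF^{-(j-i)}(A)\big)$. Grouping the index pairs by the gap $\ell=j-i\in\{1,\dots,s-1\}$, of which there are exactly $s-\ell\leq s$, produces $\sum_{0\leq i<j\leq s-1}\p(B_i\cap B_j)\leq s\sum_{\ell=1}^{s-1}\p\big(A\cap\TF^{-\ell}(A)\big)$, matching the second term. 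Assembling the two estimates closes the argument.

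I do not expect a genuine obstacle here: the proof is entirely elementary once the exact identity is isolated, and every estimate is a direct application of the $\TF$-invariance of $\p$. The only points demanding slight care are the shift/re-indexing identity for the sets $B_j$ and the combinatorial bookkeeping that each gap $\ell$ contributes $s-\ell$ pairs, giving the overall factor $s$ in the final term.
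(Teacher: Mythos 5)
Your proof is correct and follows essentially the same route as the paper's: the same triangle-inequality split into the mixing term and the Bonferroni error term, the same use of stationarity via the shift identity $\p(\TF^{-j}(A)\cap\mathscr W_{s+t,m}(A))=\p(A\cap\mathscr W_{s+t-j,m}(A))$, and the same pairwise bound $\sum_{0\leq i<j\leq s-1}\p(\TF^{-i}(A)\cap\TF^{-j}(A))\leq s\sum_{\ell=1}^{s-1}\p(A\cap\TF^{-\ell}(A))$. The only difference is presentational: you make the exact algebraic identity and the gap-counting explicit, where the paper leaves them implicit.
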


\begin{proof}
Observe that the first term in the bound is measuring the mixing across the gap t and the second term is measuring the probability that two events $A$ appear in the first block. Adding and substracting and using the triangle inequality we obtain that
\begin{multline}
\label{eq:triangular1}
\big|\p(\mathscr W_{0,s}(A)\cap\mathscr W_{s+t,m}(A))-\p(\mathscr W_{0,m}(A))(1-s\p(A))\big|\leq \\
\left|s\p(A)\p(\mathscr W_{0,m}(A))-\sum_{j=0}^{s-1}\p(A\cap \mathscr W_{s+t-j,m}(A))\right| + \\
+\left|\p(\mathscr W_{0,s}(A)\cap\mathscr W_{s+t,m}(A))-\p(\mathscr W_{0,m}(A))+\sum_{j=0}^{s-1}\p(A\cap \mathscr W_{s+t-j,m}(A))\right|.
\end{multline}
Regarding the second term on the right, by stationarity, we have
\begin{align*}
\p(\mathscr W_{0,s}(A)\cap \mathscr W_{s+t,m}(A))&=\p(\mathscr W_{s+t,m}(A))-\p(\mathscr W_{0,s}^c(A)\cap \mathscr W_{s+t,m}(A))\\&=\p(\mathscr W_{0,m}(A))-\p(\mathscr W_{0,s}^c(A)\cap\mathscr W_{s+t,m}(A)).
\end{align*}
Now, since $\mathscr W_{0,s}^c(A)\cap\mathscr W_{s+t,m}(A)=\cup_{i=0}^{s-1}\TF^{-i}(A)\cap\mathscr W_{s+t,m}(A)$, we have by Bonferroni's inequality that
\begin{align*}
0\leq &\sum_{j=0}^{s-1}\p(A\cap \mathscr W_{s+t-j,m}(A))-\p(\mathscr W_{0,s}^c(A)\cap\mathscr W_{s+t,m}(A))\leq\\ 
&\sum_{j=0}^{s-1}\sum_{i>j}^{s-1}\p(\TF^{-j}(A)\cap \TF^{-i}(A)\cap\mathscr W_{s+t,m}(A)).
\end{align*}
Hence, using these last two computations we get: 
\begin{align}
\Big|\p(\mathscr W_{0,s}(A)\cap\mathscr W_{s+t,m}(A))&-\p(\mathscr W_{0,m}(A))+\sum_{j=0}^{s-1}\p(A\cap\mathscr W_{s+t-j,m}(A))\Big|\nonumber\\
&\leq\sum_{j=0}^{s-1}\sum_{i> j}^{s-1}\p(\TF^{-j}(A)\cap \TF^{-i}(A)\cap \mathscr W_{s+t,m}(A))\nonumber\\
&\leq\sum_{j=0}^{s-1}\sum_{i>j}^{s-1}\p(\TF^{-j}(A)\cap \TF^{-i}(A))\leq s\sum_{j=1}^{s-1}\p(A\cap \TF^{-j}(A)).\nonumber
\end{align}
The result now follows directly from plugging the last estimate into \eqref{eq:triangular1}.
\end{proof}

\begin{proof}[Proof of Theorem~\ref{thm:indep-increments}]
We split time into alternate blocks of different sizes as usual and follow particularly the notation of \cite[Proposition~1]{FFT10}. Let $h:=\inf_{k\in \{1,\ldots,m\}}\{b_k-a_k\}$ and
$H:=\lceil b_k \rceil$. Let $n$ be sufficiently
large so that $k_n>2/h$. Note this guarantees that if we partition $[0,Hn]\cap
{\mathbb Z}$ into blocks of length $r_n:=\lfloor n/k_n\rfloor$,
$I_1=[0,r_n)$, $I_2=[r_n,2r_n)$,\ldots,
$I_{Hk_n}=[(Hk_n-1)r_n,Hk_nr_n)$, $I_{Hk_n+1}=[Hk_nr_n,Hn)$, then there is more than one of these blocks contained in $ J_{n,i}$. Let $S_\ell=S_\ell(k_n)$ be the number of blocks $I_j$ contained in $J_{n,\ell}$, that is,
$$S_\ell:=\#\{j\in \{1,\ldots,Hk_n\}:I_j\subset J_{n,\ell}\}.$$
As we have already observed $S_\ell>1$ for all $\ell=1,\ldots,m$.
For each $\ell=1,\ldots,m$, we define
$$\mathscr A_\ell:=\bigcap_{k=\ell}^m\mathscr W_{J_{n,k}} \left(\A_{n,k}\right).$$
Set $i_\ell:=\min\{j\in\{1,\ldots,k_n\}:I_j\subset J_{n,\ell}\}.$
Then $I_{i_\ell},I_{i_\ell+1},\ldots,I_{i_\ell+S_\ell-1}\subset J_{n,\ell}$. Now, fix $\ell$ and for each $ i\in \{i_{\ell},\ldots,i_{\ell}+S_{\ell}-1\}$ let
$$
\mathscr B_{i,\ell}:=\bigcap_{j=i}^{i_{\ell}+S_{\ell}-1}\mathscr W_{I_j}(\A_{n,\ell}),\;  I_i^*:=[(i-1)r_n,
ir_n-t_n)\; \mbox{ and } I_i':=J_i-J_i^*.$$ Note that
$|I_i^*|=r_n-t_n=:r_n^*$ and $|I_i'|=t_n$. See Figure~\ref{fig:notation}
for more of an idea of the notation here.

\begin{figure}[h]
\includegraphics[height=7cm]{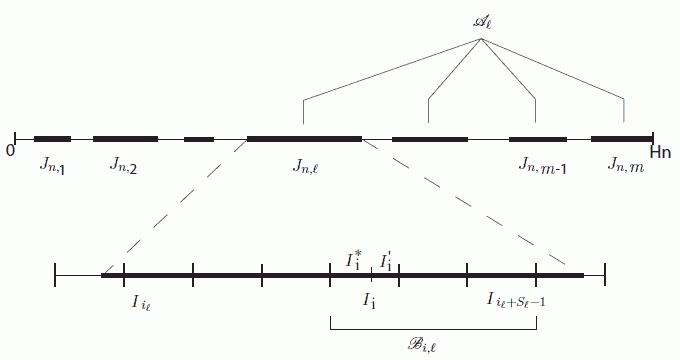}
\caption{Notation}
\label{fig:notation}
\end{figure}

Using Lemmas \ref{lem:time-gap-1} and \ref{lem:inductive-step-1}, we obtain
\begin{align*}
\Big|&\p(\BB_{i,\ell}\cap\AA_{\ell+1})-(1-r_n\p(\A_{n,\ell}))\p(\BB_{i+1,\ell}\cap\AA_{\ell+1})\Big|\leq\\
&\leq r_n^*\sum_{j=1}^{r_n^*-1}\p\left(\A_{n,\ell}\cap \TF^{-j}\A_{n,\ell}\right)+2t_n\p(\A_{n,\ell})\\
&\quad+ \sum_{j=0}^{r_n^*-1}\Big|\p(\A_{n,\ell})\p(\BB_{i+1,\ell}\cap\AA_{\ell+1})%\\&\qquad\qquad\quad
-\p\left(\TF^{-j-(i-1)r_n}\A_{n,\ell}\cap\BB_{i+1,\ell}\cap\AA_{\ell+1}\right)\Big|,
\end{align*}
Now using condition $\D_q^*(u_n)$, we obtain
%\begin{multline*}
\[\Big|\p(\BB_{i,\ell}\cap\AA_{\ell+1})-(1-r_n\p(\A_{n,\ell}))\p(\BB_{i+1,\ell}\cap\AA_{\ell+1})\Big|\leq\Upsilon_{k_n,n, \ell}\]
%\end{multline*}
where
$$\Upsilon_{k_n,n, \ell}:=r_n^*\sum_{j=1}^{r_n^*-1}\p\left(\A_{n,\ell}\cap \TF^{-j}\A_{n,\ell}\right)
+2t_n\p(\A_{n,\ell})+r_n^*\gamma(q,n,t_n).$$
By \eqref{eq:limit-Anq}, we may assume that $n$ is sufficiently large so that
$\left|1-r_n\p(\A_{n,\ell})\right|<1$ which implies
\begin{align*}
&\left|\p(\BB_{i_{\ell},\ell}\cap\AA_{\ell+1})-\left(1-r_n\p(\A_{n,\ell})\right)\p(\BB_{i_{\ell}+1,\ell}\cap\AA_{\ell+1})\right|\leq\Upsilon_{k_n,n,\ell},
\end{align*}
and
\begin{align*}
\Big|&\p(\BB_{i_{\ell},\ell}\cap\AA_{\ell+1})-\left(1-r_n\p(\A_{n,\ell})\right)^2\p(\BB_{i_{\ell}+2,\ell}\cap\AA_{\ell+1})\Big|\\
&\leq \left|\p(\BB_{i_{\ell},\ell}\cap\AA_{\ell+1})-\left(1-r_n\p(\A_{n,\ell})\right)\p(\BB_{i_{\ell}+1,\ell}\cap\AA_{\ell+1})\right|\\
&\quad+\left|1-r_n\p(\A_{n,\ell})\right|\left|\p(\BB_{i_{\ell}+1,\ell}\cap\AA_{\ell+1})-\left(1-r_n\p(\A_{n,\ell})\right)\p(\BB_{i_{\ell}+2,\ell}\cap\AA_{\ell+1})\right|\\
&\leq2\Upsilon_{k_n,n,\ell}.
\end{align*}
Inductively, we obtain
\begin{align*}
&\Big|\p(\BB_{i_{\ell},\ell}\cap\AA_{\ell+1})-\left(1-r_n\p(\A_{n,\ell})\right)^{S_{\ell}}\p(\AA_{\ell+1})\Big|\leq S_{\ell}\Upsilon_{k_n,n,\ell}.
\end{align*}
Using Lemma \ref{lem:time-gap-1},
\begin{align*}
&\Big|\p(\AA_1)-\left(1-r_n\p(\A_{n,1})\right)^{S_1}\p(\AA_2)\Big|\leq \Big|\p(\AA_1)-\p(\BB_{i_1,1}\cap\AA_2)\Big|+\\
&\quad+\Big|\p(\BB_{i_1,1}\cap\AA_2)-\left(1-r_n\p(\A_{n,1})\right)^{S_1}\p(\AA_2)\Big|\\
&\leq 2r_n \p(\A_{n,1})+S_1\Upsilon_{k_n,n,1}.
\end{align*}
and
\begin{align*}
&\Big|\p(\AA_1)-\left(1-r_n\p(\A_{n,1})\right)^{S_1}\left(1-r_n\p(\A_{n,2})\right)^{S_2}\p(\AA_3)\Big|\leq\\
&\quad\Big|\p(\AA_1)-\left(1-r_n\p(\A_{n,1})\right)^{S_1}\p(\AA_2)\Big|+\left|1-r_n\p(\A_{n,1})\right|\Big|\p(\AA_2)-\left(1-r_n\p(\A_{n,2})\right)^{S_2}\p(\AA_3)\Big|\\
&\leq 2r_n \p(\A_{n,1})+S_1\Upsilon_{k_n,n,1}+2r_n \p(\A_{n,2})+S_2\Upsilon_{k_n,n,2}.
\end{align*}

Again, by induction, we obtain
\begin{align*}
&\Big|\p(\AA_1)-\prod_{\ell=1}^m\left(1-r_n\p(\A_{n,\ell})\right)^{S_{\ell}}\Big|\leq 2r_n \sum_{\ell=1}^m\p(\A_{n,\ell})+{\sum_{\ell=1}^{m}S_{\ell}}\Upsilon_{k_n,n,\ell}.
\end{align*}
Now, it is easy to see that $S_\ell\sim k_n|J_\ell|$, for each
$\ell=1,\ldots,m$. Consequently, by \eqref{eq:limit-Anq}, we have
\begin{align*}
&\lim_{n\rightarrow+\infty}
\prod_{\ell=1}^m\left(1-r_n\p(\A_{n,\ell})\right)^{S_{\ell}}=\lim_{n\rightarrow+\infty}\prod_{\ell=1}^m\left(1-\left\lfloor\frac{n}{k_n}\right\rfloor\p(\A_{n,\ell})\right)^{k_n|J_\ell|}
=\prod_{\ell=1}^m\e^{- \nu(A_\ell)|J_\ell|}
\end{align*}

To conclude the proof it suffices to show that
\[
\lim_{n\rightarrow+\infty}2r_n \sum_{\ell=1}^m\p(\A_{n,\ell})+{\sum_{\ell=1}^{m}S_{\ell}}\Upsilon_{k_n,n,\ell}=0.
\]
Since by \eqref{eq:limit-Anq}, we have $n\p(\A_{n,\ell})\to\nu(A_\ell)\geq 0$, and recalling that $k_n\to\infty$ then
\[
\lim_{n\rightarrow+\infty}2r_n\sum_{\ell=1}^m
\p(\A_{n,\ell})=\lim_{n\rightarrow+\infty}\frac{2}{k_n}\sum_{\ell=1}^m
n\p(\A_{n,\ell})=0.
\]
Next we need to check that
$
\lim_{n\rightarrow+\infty}
{\sum_{\ell=1}^{m}S_{\ell}}\Upsilon_{k_n,n,\ell}=0,
$
which means,
\begin{equation*}
r_n^*\sum_{\ell=1}^{m}k_n|J_\ell|\sum_{j=1}^{r_n^*-1}\p\left(\A_{n,\ell}\cap \TF^{-j}\A_{n,\ell}\right)
+2t_n\sum_{\ell=1}^{m}k_n|J_\ell|\p(\A_{n,\ell})+r_n^*\sum_{\ell=1}^{m}k_n|J_\ell|\gamma(q,n,t_n)\rightarrow0.
\end{equation*}
For the first term, we observe that
$$
r_n^*\sum_{\ell=1}^{m}k_n|J_\ell|\sum_{j=1}^{r_n^*-1}\p\left(\A_{n,\ell}\cap \TF^{-j}\A_{n,\ell}\right)\leq mHn\sum_{j=1}^{r_n^*-1}\p\left(\A_{n,\ell}\cap \TF^{-j}\A_{n,\ell}\right),
$$
which vanishes by condition $\D'_q(u_n)$.
The second term also vanishes because, by \eqref{eq:kn-sequence}, we have $k_nt_n=o(n)$ and, by \eqref{eq:limit-Anq}, we have $n\p(\A_{n,\ell})\to\nu(A_\ell)\geq 0$. The third term vanishes on account of condition $\D_q^*(u_n)$.
\end{proof}

\section{Applications to dynamical systems}

In this section we start by discussing the properties of the systems needed in order to apply the theory developed earlier. Then we list classes of systems that have these properties and, finally, we give some concrete examples where we compute the exactly the limiting processes.

\subsection{Properties of the systems}\label{subsec:properties-system}

The three main ingredients needed to apply Theorem~\ref{thm:indep-increments} are conditions $\D_q^*(u_n)$, $\D'_q(u_n)$ and the outer measure $\nu$ given by \eqref{eq:limit-Anq}. 
\subsubsection{Condition $\D_q^*(u_n)$}\label{subsubsec:D_q^*}
This a mixing type condition and can be shown to follow from certain decay of correlations conditions. 
\begin{definition}[Decay of correlations]
\label{def:dc}
Let \( \mathcal C_{1}, \mathcal C_{2} \) denote Banach spaces of real valued measurable functions defined on \( \X \).
We denote the \emph{correlation} of non-zero functions $\phi\in \mathcal C_{1}$ and  \( \psi\in \mathcal C_{2} \) w.r.t.\ a measure $\mu$ as
\[
\cv_\mu(\phi,\psi,n):=\frac{1}{\|\phi\|_{\mathcal C_{1}}\|\psi\|_{\mathcal C_{2}}}
\left|\int \phi\, (\psi\circ f^n)\, \dif\mu-\int  \phi\, \dif\mu\int
\psi\, \dif\mu\right|.
\]

We say that we have \emph{decay
of correlations}, w.r.t.\ the measure $\mu$, for observables in $\mathcal C_1$ \emph{against}
observables in $\mathcal C_2$ if, for every $\phi\in\mathcal C_1$ and every
$\psi\in\mathcal C_2$ we have
 $\cv_\mu(\phi,\psi,n)\to 0,\,\text{ as $n\to\infty$.}$
  \end{definition}

We say that we have \emph{decay of correlations against $L^1$
observables} whenever  this holds for $\mathcal C_2=L^1(\mu)$  and
$\|\psi\|_{\mathcal C_{2}}=\|\psi\|_1=\int |\psi|\,\dif\mu$.

The main advantage of condition $\D_q^*(u_n)$ when compared to the $\Delta$ condition, which is used in the classical literature on the subject, is the fact that, unlike $\Delta$, it follows easily from summable decay of correlations, \ie $\cv_\mu(\phi,\psi,n)\leq \rho_n$, where $\sum_{n=1}^\infty\rho_n<\infty$. This is easily seen if $\I_A\in \mathcal C_1, \mathcal C_2$ for every $A\in\mathcal B$ because then, by taking $\phi=\I_{\A_{n,k}}$ and $\psi=\I_{ \bigcap_{i=k}^m\W_{J_{n,i}-t}\left(\A_{n,i}\right)}$, as long as $\|\phi\|_{\mathcal C_1}, \|\psi\|_{\mathcal C_2}\leq C>0$ for all $n\in\N$, then we immediately get that summable decay of correlations for observables in $\mathcal C_1$ against
observables in $\mathcal C_2$ implies condition $\D_q^*(u_n)$, where $\gamma(q,n,t)=C^2\rho_t$ because then we can choose $(t_n)_{n\in\N}$ such that $t_n=o(n)$ and $\lim_{n\to\infty}C^2\rho_{t_n}=0$.
\begin{remark}
We remark that $\mathcal C_1$ does not necessarily need to contain the functions $\I_{\A_{n,k}}$. For example, for one-dimensional systems if $\mathcal C_1$ is the space of H\"older continuous functions we can use a continuous approximation of $\I_{\A_{n,k}}$ and still prove condition $\D_q^*(u_n)$. See discussion in \cite[Section~5.1]{F13} or \cite[Section~4.4]{LFFF16}. For higher dimensional systems with contracting directions, for example, one can still check $\D_q^*(u_n)$ following the argument used in \cite[Section~2]{GHN11}. 
\end{remark}
\subsubsection{Condition $\D'_q(u_n)$}\label{subsubsec:D'_q} This condition is the hardest to check and depends on the short recurrence properties of small vicinities  of maximal set, \ie the set of points where $\varphi$ is maximised. Here, we are assuming that the maximal set is reduced to a point but the theory can be extended using the ideas in \cite{AFFR16,AFFR17} for more general maximal sets. For typical points $\zeta$, \ie for $\mu$-a.e. $\zeta$, we have that condition $\D'_0(u_n)$ holds, which means absence of clustering. This can be proved for different systems as in \cite{C01,CC13, GHN11, HNT12, FHN14}. At periodic points $\zeta$, condition $\D'_q(u_n)$, with $q$ equal to the period of $\zeta$, can be proved to hold for systems with a strong form of decay of correlations, namely, summable decay of correlations against $L^1$, as in \cite[Lemma~4.1]{FFT12} or \cite[Section~4.2.4]{LFFF16}. In fact, under this assumption, the periodic points are the only exceptions and one can show that for all non-periodic points condition $\D'_0(u_n)$ holds, \cite{AFV15}, which gives a dichotomy.

\subsubsection{The outer measure}\label{subsubsec:outer-measure}
Even after assuring the existence of $\nu$, its computation is quite sensitive. In fact, as observed in \cite{CFFH15}, with the formulas for the EI and for the multiplicity distribution of 1-dimensional REPP, the choice of the metric may affect the computation. Hence, in order to find the outer measure that describes the stacking of mass points of the limiting point process, one needs some regularity of the system. To be more precise, one needs some regularity of the measure $\mu$ and of the action of $T$ on a small neighbourhood of the maximal set, $\zeta$. In order to give precise formulas for $\nu$ in some worked out examples, we will assume that $\mu$ is absolutely continuous with respect to Lebesgue measure and  its Radon-Nikodym density is sufficiently regular so that for all $x\in\X$ we have
\begin{equation}
\label{eq:density}
\lim_{\eps\to 0}\frac{\mu(B_\eps(x))}{|B_\eps(x)|}=\frac{d\mu}{d\leb}(x).
\end{equation}
\begin{remark}
Note that if $T$ is one dimensional smooth map modelled by the full shift as in \cite[Section~7.1]{FFT15}
and the derivative is sufficiently regular then, as seen in \cite[Section~7.3]{FFT15}, the invariant density is fairly smooth and formula \eqref{eq:density} holds for all $x\in\X$. Moreover, we can handle equilibrium states that are singular measures w.r.t. Lebesgue if there exists some regularity w.r.t. the conformal measure as described in  \cite[Section~7.3]{FFT15}.
\end{remark}

\subsubsection{Extensions by inducing}\label{subsubsec:inducing}
In \cite{BSTV03}, the authors used inducing to extend the limiting distributional results regarding rare events to systems which admitted a `nice' first return time induced map. This result was proved for generic points $\zeta$ and later generalised, in \cite{HWZ14}, for all points. More recently, in \cite{FFM17}, it was proved that if a one-dimensional marked empirical REPP converges to certain random measure (marked point process) for the action of a first return time induced map of some system then the same statement holds true for the original dynamics of the system. The proof \cite[Theorem~2.C, Section~5]{FFM17} can be adapted in order to give the same statement for the multi-dimensional point processes we consider here. Namely, replacing the event $\mathbb A(J,\mathbf x,n)=\{A_n(I_1)>x_1, A_n(I_2)>x_2,\ldots, A_n(I_k)>x_k\}$, defined in the beginning of the proof of \cite[Lemma~5.1]{FFM17}, where $J$ is the disjoint union of the time intervals $I_j$ and $\mathbf x=(x_1,\ldots,x_k)\in\R^k$ and the marked point process $A_n$ is defined in \cite[Definition~2.7]{FFM17}, by 
$$
\mathbb A(E,\mathbf l,n)=\{N_n(J_1\times A_1)=l_1, N_n(J_1\times A_1)=l_2,\ldots, N_n(J_m\times A_m)=l_m\},
$$
where we are using the same notation employed in the beginning of Section~\ref{subsec:dependence} for $E$ and $\mathbf l=(l_1,\ldots,l_m)\in\N_0^m$, replacing $v_n$ by $n$ and  $r_{U_n}$ by $r_{A_{n,k}}$, for each $k=1, \ldots,m$, the proof goes through practically with the same steps.

\subsection{Classes of systems of application}\label{subsec:systems}
The first class of systems we consider are those for which there is summable decay of correlations against $L^1$. Examples of systems with such property include:
\begin{enumerate}[label=A.{\arabic*})]

\item Uniformly expanding maps on the circle/interval (see \cite{BG97});

\item Markov maps (see \cite{BG97});

\item \label{rychlik}Piecewise expanding maps of the interval with countably many branches like Rychlik maps (see \cite{R83});

\item Higher dimensional piecewise expanding maps studied by Saussol in \cite{S00}.

\end{enumerate}

For all the systems above, a dichotomy holds, \ie conditions $\D^*_q(u_n)$ and $\D'_q(u_n)$ hold with $q=0$, for non-periodic points, $\zeta$, while, for periodic points, $q$ can be taken equal to a multiple of the period of $\zeta$. Then, for all systems which admit a first return time induced map, which falls into the category above, we can also apply the convergence results. Examples of such systems with nice induced maps include:
\begin{enumerate}[label=B.{\arabic*})]

\item quadratic maps with invariant densities for which the critical point is non-recurrent, such as Misiurewicz maps (see \cite{M81,MS93})

\item maps with indifferent fixed points such as the Manneville-Pommeau or Liverani-Saussol-Vaienti maps (see \cite{PM80,LSV99})

\end{enumerate}

In the case of absence of clustering, \ie when $q=0$, for generic points $\zeta$, more examples can be considered, namely,
\begin{enumerate}[label=D.{\arabic*})]

\item toral automorphisms  (see \cite{CFFH15})

\item H\'enon maps (see \cite{CC13})

\item Billiards with exponential and polynomial decay of correlations (\cite{GHN11,FHN14})

\item General non-uniformly hyperbolic systems admitting Young towers with polynomial tails (\cite{PS16,HW16}).

\end{enumerate}

\subsection{A dichotomy}\label{subsec:dichotomy}
In order to illustrate the scope of application we state and prove a general theorem giving the convergence of empirical multi-dimensional REPP. We remark that some of the conditions are not necessary and in some situations they can be relaxed. Yet, this result can be applied to all the examples A.1)--A.4), directly. Moreover, since the examples B.1)--B.2) have a first-return time induced map that falls into category \ref{rychlik}, then the convergence of the multi-dimensional REPP can be obtained indirectly from this result as described in Section~\ref{subsubsec:inducing}. 
\begin{theorem}
\label{thm:convergence-hyperbolic}
Let $T:\X\to\X$ be a system with an invariant measure $\mu$ satisfying \eqref{eq:density} and summable decay of correlations against $L^1$ observables, \ie for all $\phi\in\mathcal C_1$ and $\psi\in L^1$, then $\cv(\phi,\psi,n)\leq \rho_n$, with $\sum_{n\geq 1}\rho_n<\infty$, where $\mathcal C_1$ contains functions of the type $\phi=\I_B$, for all $B\in\mathcal B$ and if $(B_n)_{n\in\N}$ is such that there exists a uniform bound for the number of connected components of all $B_n\in\mathcal B$, then there exists $C>0$ such that $\|\I_{B_n}\|_{\mathcal C_1}\leq C$, for all $n\in\N$. Consider the stochastic processes given by \eqref{eq:SP-def} for observables of the type \eqref{eq:observable-form} and normalising sequences $(u_n(\tau))_{n\in\N}$ as in \eqref{un}. Assume that $\zeta$ is an hyperbolic repelling point such that $T$ is continuous at all points of the orbit of $\zeta$. Then we have the following statements:
\begin{enumerate}

\item if $\zeta$ is a non-periodic point then the 2-dimensional empirical REPP process $N_n$ given in Definition~\ref{def:REPP-2d} converges to the 2-dimensional Poisson point process $N$ given by \eqref{Poisson-2d} and the multi-dimensional empirical REPP process $N^*_n$ given in Definition~\ref{def:multi-dimensional-REPP} converges to a multi-dimensional Poisson process $N^*$ given by \eqref{eq:repelling-limit-REPP-multi-d-no-clustering}.

\item if $\zeta$ is a periodic point then the multi-dimensional empirical REPP process $N^*_n$ given in Definition~\ref{def:multi-dimensional-REPP} converges to a multi-dimensional Poisson process $N^*$ as in \eqref{eq:repelling-limit-REPP-multi-d} and if $\zeta$ expands uniformly  in all directions then the 2-dimensional empirical REPP process $N_n$ given in Definition~\ref{def:REPP-2d} converges to the process $\tilde N$ given by \eqref{eq:repelling-limit-REPP-2d}.

\end{enumerate}
 
\end{theorem}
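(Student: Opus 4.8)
The plan is to deduce convergence from Kallenberg's criteria (Theorem~\ref{thm:Kallenberg}), using Theorem~\ref{thm:indep-increments} to supply the void probabilities (condition (A)) and a direct first-moment computation to supply the intensities (condition (B)), and then to pin down the candidate limits by matching void probabilities via Remark~\ref{rem:equality-pp}. Accordingly the work splits into three tasks: verifying the hypotheses of Theorem~\ref{thm:indep-increments}, namely conditions $\D_q^*(u_n)$, $\D'_q(u_n)$ and the existence of $\nu$ through \eqref{eq:limit-Anq}; computing $\nu$ explicitly in each regime; and checking that the processes \eqref{Poisson-2d}, \eqref{eq:repelling-limit-REPP-multi-d-no-clustering}, \eqref{eq:repelling-limit-REPP-2d} and \eqref{eq:repelling-limit-REPP-multi-d} are simple, Radon, charge no rectangle boundary, and have the void probabilities and intensities dictated by the first two tasks.

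First I would fix $q$ as in \eqref{def:q}: when $\zeta$ is non-periodic one has $q=0$, while when $\zeta$ has prime period $p$ the continuity of $T$ along the orbit of $\zeta$ forces the images $T^i(x)$, $1\le i\le p-1$, of points $x$ near $\zeta$ to remain uniformly away from $\zeta$, so that only the return at time $p$ survives in $\A_{n,k}$ and $q=p$ works. Condition $\D_q^*(u_n)$ then follows from the assumed summable decay of correlations exactly as in Section~\ref{subsubsec:D_q^*}: taking $\phi=\I_{\A_{n,k}}$ and $\psi$ the indicator of the relevant intersection of $\W$-events, the hypothesis that sets with a uniformly bounded number of connected components have uniformly bounded $\mathcal C_1$-norm gives $\|\phi\|_{\mathcal C_1}\le C$, while $\|\psi\|_{L^1}\le 1$, whence $\gamma(q,n,t)=C\rho_t$ and a suitable $(t_n)$ exists. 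Condition $\D'_q(u_n)$ is the short-recurrence estimate: for $q=0$ it encodes absence of clustering, and for $q=p$ it follows from summable decay against $L^1$ as in \cite[Lemma~4.1]{FFT12}, the periodic point being the only source of fast returns.

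Next I would compute $\nu$ from \eqref{eq:limit-Anq}. Writing $A_{n,k}$ in tangent-space coordinates via $\Phi_\zeta$ and using the density regularity \eqref{eq:density} to replace $\mu$ by a constant multiple of Lebesgue measure, the chosen normalisation gives $n\p(A_{n,k})\to|A_k|$; so in the no-clustering case $q=0$ one has $\A_{n,k}=A_{n,k}$ and $\nu=\leb$. For $q=p$ the only constraint surviving in $\A_{n,k}=A_{n,k}^{(p)}$, defined by \eqref{eq:A^j}, is $\TF^{-p}(A_{n,k}^c)$, and the linearisation $T^p(x)-\zeta\approx DT_\zeta^p(x-\zeta)$ identifies $A_{n,k}\cap\TF^{-p}(A_{n,k})$, in tangent coordinates, with $A_k\cap DT_\zeta^{-p}(A_k)$; iterating and passing to the limit yields \eqref{eq:outer-measure-ex-2} with $M:=DT_\zeta^p$. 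Under uniform expansion at rate $\alpha$, so that $DT_\zeta^p=\alpha\,O$ with $O$ orthogonal and $|\det DT_\zeta^p|=\alpha^d$, this collapses, on the magnitude coordinate of Definition~\ref{def:REPP-2d}, to \eqref{eq:outer-measure-ex-1} with $\theta=1-\alpha^{-d}$; in general $\theta=1-|\det^{-1}(DT_\zeta^p)|$.

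Finally I would identify the limits. With $M:=DT_\zeta^p$, the seed points $(T_{i,j},U_{i,j})$ of $N_\theta^*$ form a space-time Poisson process of intensity $\theta\,\dif t\,\dif\leb$, and a point $(T_{i,j},M^{\ell}U_{i,j})$ falls in $J\times A$ precisely when $U_{i,j}\in M^{-\ell}(A)$; hence the void probability of $J\times A$ equals $\e^{-\theta|J|\leb(S)}$, where $S:=\bigcup_{\ell\ge0}M^{-\ell}(A)$. The self-similar identity $S=A\cup M^{-1}(S)$ gives $\leb(S)=|\det M|^{-1}\leb(S)+\nu(A)$, i.e. $\theta\leb(S)=\nu(A)$, so the void probability is $\e^{-\nu(A)|J|}$, matching Theorem~\ref{thm:indep-increments}; the same bookkeeping with $\sum_{\ell\ge0}|\det M|^{-\ell}=1/\theta$ shows the intensity is $|J||A|$, which is exactly $\lim_n\E(N_n^*(J\times A))$, so Kallenberg's (A) and (B) both hold. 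Since these processes are simple and Radon (the expanding $M^{\ell}$ push successive points to infinity, so the points are a.s.\ distinct and locally finite) and charge no boundary, Remark~\ref{rem:equality-pp} and Theorem~\ref{thm:Kallenberg} yield $N_n^*\Rightarrow N_\theta^*$; the non-periodic statements and the two-dimensional statements follow identically, with $\nu=\leb$ in the former and with $M$ replaced by the scalar magnification by $\alpha^d$ in the latter. The main obstacle is the computation of $\nu$: reducing $\A_{n,k}$ to the single period-$p$ return and controlling the linearisation error uniformly along the shrinking scale requires both \eqref{eq:density} and the orbit-continuity of $T$, and it is here that the precise form of the stacking measure — and hence the explicit limit — is determined.
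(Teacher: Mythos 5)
Your overall architecture coincides with the paper's: verify $\D_q^*(u_n)$ and $\D'_q(u_n)$ from summable decay of correlations against $L^1$, compute $\nu$ via \eqref{eq:limit-Anq}, invoke Theorem~\ref{thm:indep-increments} for Kallenberg's criterion (A), and check (A) and (B) directly for the candidate limits. Your verification of the candidate limits is in fact cleaner than the paper's: computing the void probability as $\e^{-\theta|J|\leb(S)}$ with $S=\bigcup_{\ell\ge0}M^{-\ell}(A)$ and extracting $\theta\leb(S)=\nu(A)$ from the self-similarity $S=A\cup M^{-1}(S)$ replaces the paper's term-by-term Poisson expansion, and your geometric-series computation of the intensity replaces the decomposition into the sets $G^{(j)}$ and the identity \eqref{eq:return-average-formula}.

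However, there is a genuine gap in the periodic case: the claim that $q=p$ satisfies \eqref{def:q} is false for general $A_k$. The sets $A_{n,k}$ are finite unions of annuli (or rectangles) around $\zeta$, and while continuity along the orbit rules out returns at times that are not multiples of $p$, an inner component of $A_{n,k}$ can be carried by $T^{jp}$, $j\ge 2$, onto an outer component even though its image under $T^{p}$ lands in a gap between components. For such $A_k$ one has $R_n^{(p)}\le jp$ for all $n$, so $\lim_{n\to\infty} R_n^{(p)}=\infty$ fails and Theorem~\ref{thm:indep-increments} cannot be invoked with $q=p$; moreover $A_{n,k}^{(p)}$ excludes only the first preimage $T^{-p}(A_{n,k})$, so $\lim_n n\p(A_{n,k}^{(p)})=|A_k\setminus M^{-1}(A_k)|$, which is strictly larger than the required $\nu(A_k)=|A_k\setminus\bigcup_{j\ge1}M^{-j}(A_k)|$ whenever some $M^{-j}(A_k)$ with $j\ge2$ meets $A_k$ (for instance $A_k=(1,2]\cup(8,16]$ with $\alpha^d=2$). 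Your phrase ``iterating and passing to the limit'' conceals exactly this point. The paper resolves it by taking $q'$ to be a multiple of $p$ depending on the inner and outer radii of $A_k$, namely $q'=p\left\lceil(\log\tau_{k,2\varsigma_k}-\log\tau_{k,1})/(d\log\alpha)\right\rceil$ (with the analogous eigenvalue-dependent choice in the multi-dimensional case), which guarantees both $R_n^{(q')}\to\infty$ and the identity $A_{n,k}^{(q')}=A_{n,k}\setminus\bigcup_{i\ge1}T^{-ip}(A_{n,k})$ of \eqref{eq:Ankq}, whence the correct $\nu$.
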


As direct consequence we recover the dichotomy regarding the convergence of the 1-dimensional REPP studied in \cite{FFT13,AFV15}. In fact, note that the corollary below implies, for example, the main statement of \cite[Theorem~1]{FFT13} and \cite[Proposition~3.3]{AFV15}.

\begin{corollary}
\label{cor:dichotomy}
Under the same conditions of Theorem~\ref{thm:convergence-hyperbolic}, we have:
\begin{enumerate}

\item if $\zeta$ is a non-periodic point then the 1-dimensional empirical REPP process $N_n^1$ given in Definition~\ref{def:1d-REPP} converges to the 1-dimensional homogeneous Poisson process $N$ of intensity 1.

\item if $\zeta$ is a periodic point then the 1-dimensional empirical REPP process $N_n^1$ given in Definition~\ref{def:1d-REPP} converges to the compound Poisson process $\tilde N^1$ given in \eqref{eq:CPP}, with multiplicity distribution given by $\pi(\kappa)=\p(D_i=\kappa)
=\theta(1-\theta)^{\kappa-1}$.

\end{enumerate}

\end{corollary}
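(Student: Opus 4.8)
The plan is to derive Corollary~\ref{cor:dichotomy} from Theorem~\ref{thm:convergence-hyperbolic} through the Continuous Mapping Theorem, applied to the projection that collapses the magnitude/position coordinate onto the time axis after restricting to the reference exceedance level. Concretely, since $\{X_j>u_n(1)\}=\{u_n^{-1}(X_j)<1\}$, the one-dimensional REPP of Definition~\ref{def:1d-REPP} is recovered from the two-dimensional REPP of Definition~\ref{def:REPP-2d} as $N_n^1=\Psi(N_n)$, where $\Psi\colon M_p([0,\infty)^2)\to M_p([0,\infty))$ is given by $\Psi(m)(B)=m(B\times[0,1))$; in the multi-dimensional setting one uses instead $N_n^1=\Psi(N_n^*)$ with $\Psi(m)(B)=m(B\times\mathcal D)$, where $\mathcal D\subset T_\zeta\X$ is the limiting normalised exceedance ball, which has $|\mathcal D|=1$ by the choice of normalisation in Definition~\ref{def:multi-dimensional-REPP} (the $n$-dependent exceedance region $\mathcal D_n$ converges to $\mathcal D$ and the points falling in $\mathcal D_n\triangle\mathcal D$ are asymptotically negligible by \eqref{eq:limit-Anq}). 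Theorem~\ref{thm:convergence-hyperbolic} then supplies $N_n\Rightarrow N,\tilde N$ or $N_n^*\Rightarrow N^*,N_\theta^*$, and it remains to push these through $\Psi$ and to identify the images.

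To invoke the Continuous Mapping Theorem I must verify that the limiting process almost surely avoids the set of discontinuities of $\Psi$. The map $\Psi$ is the composition of the restriction to the horizontal strip over $\mathcal D$ (or over $[0,1)$) with the projection onto time; it is continuous, in the vague topology, at every $m$ that charges neither the boundary $\partial\mathcal D$ nor escapes mass to infinity in the vertical direction over a bounded time window. The vertical strip being bounded, the projection is harmless on Radon measures, so the only genuine discontinuities come from atoms on $\partial\mathcal D$. Since the spatial marginals of the limiting processes are absolutely continuous with respect to Lebesgue measure --- the laws of the $U_{i,j}$ in \eqref{Poisson-2d} and \eqref{eq:repelling-limit-REPP-multi-d-no-clustering}, and of their iterates $\alpha^{\ell d}U_{i,j}$, respectively $DT_\zeta^{\ell p}(U_{i,j})$, in \eqref{eq:repelling-limit-REPP-2d} and \eqref{eq:repelling-limit-REPP-multi-d} --- the limit places no mass on $\partial\mathcal D$ almost surely, and the Continuous Mapping Theorem applies.

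It then remains to identify $\Psi(N),\Psi(N^*)$ in part (1) and $\Psi(\tilde N),\Psi(N_\theta^*)$ in part (2). In part (1), $N$ (resp.\ $N^*$) is Poisson with Lebesgue intensity, so the number of its points in $B\times\mathcal D$ is Poisson with mean $|B|\,|\mathcal D|=|B|$ and the projection is the homogeneous Poisson process of intensity $1$, giving statement (1). In part (2) I work with $\tilde N=\sum_{i,j}\sum_{\ell\ge0}\delta_{(T_{i,j},\,\alpha^{\ell d}U_{i,j})}$ from \eqref{eq:repelling-limit-REPP-2d}: a pile member $\alpha^{\ell d}U_{i,j}$ lies in $[0,1)$ only if $U_{i,j}<\alpha^{-\ell d}\le 1$, which forces $i=1$, so only the first stratum survives, and the surviving time events are exactly the $T_{1,j}$, whose inter-arrivals are $\mathrm{Exp}(\theta)$, i.e.\ a Poisson process of rate $\theta$. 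Since $U_{1,j}\sim\mathcal U_{(0,1]}$, the surviving multiplicity $D_j=\#\{\ell\ge0:\alpha^{\ell d}U_{1,j}<1\}$ satisfies $\p(D_j=\kappa)=\alpha^{-(\kappa-1)d}-\alpha^{-\kappa d}=\theta(1-\theta)^{\kappa-1}$, with $\theta=1-\alpha^{-d}=1-|\det^{-1}(DT^p_\zeta)|$, and the multiplicities are independent of one another and of $(T_{1,j})_j$. Hence $\Psi(\tilde N)=\sum_j D_j\,\delta_{T_{1,j}}$ is precisely the compound Poisson process $\tilde N^1$ of \eqref{eq:CPP} with $\tau=1$ and geometric multiplicity, which is statement (2); the process $N_\theta^*$ projects analogously, the geometric law arising from the Jacobian scaling $|\det DT_\zeta^p|$ of the nested preimages $DT_\zeta^{-\ell p}(\mathcal D)$.

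I expect the verification of the continuity of $\Psi$ to be the crux: restriction to a sub-region is not vaguely continuous in general, so the discontinuity set must be pinned down precisely and the almost-sure avoidance of $\partial\mathcal D$ --- together with the control of the $n$-dependent region $\mathcal D_n$ against the fixed $\mathcal D$ --- argued with care, whereas the identification of the two limiting laws is then a direct computation.
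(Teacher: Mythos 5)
Your proposal is correct and follows essentially the same route as the paper: the one-dimensional REPP is recovered by restricting the two- (or multi-)dimensional limiting process to the strip over the exceedance region and projecting onto the time axis, with the rate $\theta$ for the time events and the geometric multiplicity read off from the nested preimages $DT_\zeta^{-\ell p}$ of the normalised ball exactly as in the paper's computation. The only difference is presentational: you package the projection as an explicit map $\Psi$ and invoke the CMT with a continuity check, whereas the paper argues directly by matching count distributions, $\p(N_n^1(J)=k)=\p\big(N_n(J\times[0,\tau))=k\big)\sim\p\big(N_n^*(J\times B_{(\tau/|B_1(\zeta)|)^{1/d}}(\zeta))=k\big)$, and passing to the limit.
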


We remark that the convergence of the multi-dimensional REPP $N_n^*$ to $N^*$ allows us to obtain the convergence of 2-dimensional REPP $N_n$ even when the expansion is not uniform in all directions, although the limiting process is now more complicated to describe.

\begin{corollary}
\label{cor:two-dimensional}
Under the same conditions of Theorem~\ref{thm:convergence-hyperbolic}, in the case of $\zeta$ being a periodic hyperbolic repelling point, regarding the convergence of the 2-dimensional empirical REPP process $N_n$ given in Definition~\ref{def:REPP-2d} we have: $N_n$ converges to the point process $N^\dag$ given by:
\begin{equation}
\label{eq:N-dag}
N^\dag=\sum_{i,j=1}^\infty\sum_{\ell=0}^{\infty} \delta_{\left(T_{i,j},\,\left|B_{\dist\left(\Phi_\zeta\left((U_{i,j}/|B_1(\zeta)|)^{1/d}DT_\zeta^{\ell p}( \cos\Theta_{i,j},\sin\Theta_{i,j})\right),\zeta\right)}(\zeta)\right|\right)},
\end{equation}
where  $T_{i,j}$ is given by \eqref{T-matrix}, for $\bar T_{i,j}\stackrel[]{D}{\sim}\text{Exp}\left(\theta\right)$, with $\theta=1-|\det^{-1}(DT^p_\zeta)|$; $U_{i,j}\stackrel[]{D}{\sim} \mathcal U_{(i-1,i]}$; $\Theta_{i,j}\sim \U_{[0,2\pi)}$ and $(\bar T_{i,j})_{i,j\in\N}$, $(U_{i,j})_{i,j\in\N}$ and $(\Theta_{i,j})_{i,j\in\N}$ are mutually independent. 
\end{corollary}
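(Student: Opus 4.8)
The plan is to deduce the convergence of the two–dimensional REPP $N_n$ from the already established convergence $N_n^*\Rightarrow N_\theta^*$ of the multi–dimensional REPP (Theorem~\ref{thm:convergence-hyperbolic}, case~(2)) by realising $N_n$ as the image of $N_n^*$ under an explicit projection and then invoking a continuous mapping argument. Writing $c_n=g^{-1}(u_n(1))|B_1(\zeta)|^{1/d}$, the $j$-th atom of $N_n^*$ sits at normalised position $y_j=\Phi_\zeta^{-1}(T^j(x))/c_n$, while the $j$-th atom of $N_n$ sits at magnitude $u_n^{-1}(X_j)=n\mu\bigl(B_{\dist(\Phi_\zeta(c_ny_j),\zeta)}(\zeta)\bigr)$, using $T^j(x)=\Phi_\zeta(c_ny_j)$ and \eqref{eq:REEP-2d-dynamic}. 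Hence, defining maps $\Pi_n\colon M_p([0,\infty)\times T_\zeta\X)\to M_p([0,\infty)^2)$ acting atomwise by $(t,y)\mapsto(t,\Psi_n(y))$ with $\Psi_n(y):=n\mu\bigl(B_{\dist(\Phi_\zeta(c_ny),\zeta)}(\zeta)\bigr)$, one has the exact identity $N_n=\Pi_n(N_n^*)$. First I would record this identity, so that everything is reduced to controlling the limit of the magnitude maps $\Psi_n$.

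Next I would compute the locally uniform limit $\Psi:=\lim_n\Psi_n$. Since $c_n\to0$, the argument $c_ny$ stays near $0$ for every fixed $y$, so $\dist(\Phi_\zeta(c_ny),\zeta)\to0$; using the density regularity \eqref{eq:density}, $\mu(B_\eps(\zeta))\sim\frac{d\mu}{d\leb}(\zeta)|B_1(\zeta)|\,\eps^d$, together with the normalisation $n\mu(B_{g^{-1}(u_n(1))}(\zeta))\to1$ coming from \eqref{un}, a short computation gives $\Psi_n(y)\to|B_1(\zeta)|\,|y|^d=\bigl|B_{\dist(\Phi_\zeta(y),\zeta)}(\zeta)\bigr|=:\Psi(y)$, the Lebesgue volume of the ball of radius $|y|$ in $T_\zeta\X$ (the last equality being the reading of \eqref{eq:N-dag} in a normal/exponential chart at $\zeta$, for which $\dist(\Phi_\zeta(y),\zeta)=|y|$). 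The map $\Psi$ is continuous and proper (preimages of bounded sets are balls), so the induced $\Pi\colon(t,y)\mapsto(t,\Psi(y))$ is vaguely continuous on all of $M_p$, and $\Psi_n\to\Psi$ uniformly on compact subsets of $T_\zeta\X$.

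I would then invoke the extended, converging–maps version of the continuous mapping theorem: whenever $\mu_n\to\mu$ vaguely and $\Psi_n\to\Psi$ locally uniformly with $\Psi$ continuous and proper, one has $\Pi_n(\mu_n)\to\Pi(\mu)$ vaguely, whence $N_n=\Pi_n(N_n^*)\Rightarrow\Pi(N_\theta^*)$. Concretely, for a test function $f$ with compact support in $[0,\infty)^2$ one writes $\int f\,\dif\Pi_n(\mu_n)=\int f\circ(\mathrm{id}\times\Psi_n)\,\dif\mu_n$ and splits off $\int f\circ(\mathrm{id}\times\Psi)\,\dif\mu_n\to\int f\,\dif\Pi(\mu)$, the error being controlled because properness of $\Psi$ confines the relevant mass to a compact set on which $f\circ(\mathrm{id}\times\Psi_n)\to f\circ(\mathrm{id}\times\Psi)$ uniformly. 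This joint passage to the limit is the step I expect to be the main obstacle: because $\Pi_n$ itself varies with $n$, the ordinary continuous mapping theorem does not apply, and one must carefully check that locally uniform convergence of the magnitude functions together with properness is enough to transport vague convergence of point measures through $\Pi_n$.

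Finally I would identify $\Pi(N_\theta^*)$ with the process $N^\dag$ of \eqref{eq:N-dag}. This amounts to re–describing $N_\theta^*$ by replacing its radial annuli $B_i(\zeta)\setminus B_{i-1}(\zeta)$ with annuli of unit \emph{volume}: a Poisson time process of rate $\theta\,\leb(B_i(\zeta)\setminus B_{i-1}(\zeta))$ marked by positions uniform on that annulus generates, on $[0,\infty)\times T_\zeta\X$, the Poisson process of intensity $\theta\,(\dif t\times\dif\leb)$, and the identical total process is obtained by partitioning instead into sets of unit Lebesgue volume, which carry rate $\theta$ and on which a uniform position is represented in polar form as $(U_{i,j}/|B_1(\zeta)|)^{1/d}(\cos\Theta_{i,j},\sin\Theta_{i,j})$ with $U_{i,j}\sim\mathcal U_{(i-1,i]}$ and $\Theta_{i,j}\sim\mathcal U_{[0,2\pi)}$; the deterministic piles $\{DT_\zeta^{\ell p}(\cdot)\}_{\ell\ge0}$ attach identically in either description. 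Using linearity of $DT_\zeta^{\ell p}$ to extract the scalar factor and then applying $\Psi$ atomwise reproduces exactly the vertical coordinate of \eqref{eq:N-dag}, while $\theta=1-|\det^{-1}(DT_\zeta^p)|$ is inherited from $N_\theta^*$ (equivalently read off from \eqref{eq:outer-measure-ex-2}, where $\bigcup_{j\ge1}(DT_\zeta^p)^{-j}(A)=(DT_\zeta^p)^{-1}(A)$ yields $\nu(A)/|A|=1-|\det^{-1}(DT_\zeta^p)|$).
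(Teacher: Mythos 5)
Your proposal is correct, but it reaches the conclusion by a genuinely different route from the paper. The paper's proof is a two-line reduction inside its Kallenberg framework: it observes that the avoidance probabilities of $N_n$ on time--magnitude rectangles $J\times(\tau_1,\tau_2]$ agree asymptotically with those of $N_n^*$ on $J\times\bigl(B_{(\tau_2/|B_1(\zeta)|)^{1/d}}(\zeta)\setminus B_{(\tau_1/|B_1(\zeta)|)^{1/d}}(\zeta)\bigr)$, reads off the outer measure $\nu(A_k)=\bigl|\tilde A_k\setminus\bigcup_{i\geq1}DT_\zeta^{-ip}(\tilde A_k)\bigr|$ from Theorem~\ref{thm:indep-increments}, and then asserts that $N^\dag$ matches this $\nu$ ``as in the proof of similar statements for $\tilde N$ and $N^*$'', i.e.\ by re-verifying Kallenberg's criteria (A) and (B). You instead realise $N_n$ as the atomwise pushforward $\Pi_n(N_n^*)$, prove locally uniform convergence of the magnitude maps $\Psi_n\to\Psi$ with $\Psi$ proper, and transport the convergence $N_n^*\Rightarrow N_\theta^*$ through the $n$-dependent maps via the extended continuous mapping theorem, finally identifying $\Pi(N_\theta^*)$ with $N^\dag$ by repartitioning the base Poisson process $\theta\,(\dif t\times\dif\leb)$ into unit-volume annuli. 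What your route buys is that $N^\dag$ emerges as an explicit deterministic functional of $N_\theta^*$, so no separate verification of Kallenberg's criteria for $N^\dag$ is needed and the polar/volume reparametrisation in \eqref{eq:N-dag} becomes transparent; the cost is the converging-maps CMT and the properness argument, which the paper's avoidance-function route avoids entirely. Two points you should make explicit if you write this up: the identity $N_n=\Pi_n(N_n^*)$ only holds modulo atoms escaping to infinity (iterates outside the chart domain of $\Phi_\zeta$, and the asymptotic rather than exact relation $u_n^{-1}(z)\sim n\mu(B_{g^{-1}(z)}(\zeta))$), which is harmless for vague convergence but needs a sentence; and the representation $(\cos\Theta_{i,j},\sin\Theta_{i,j})$ with $\Theta_{i,j}\sim\U_{[0,2\pi)}$ is $d=2$ notation (for general $d$ the direction should be uniform on $S^{d-1}$), though this limitation is already present in the paper's own statement of the corollary.
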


\begin{proof}[Proof of Theorem~\ref{thm:convergence-hyperbolic}]
 Let $J_k$, $A_k$, $J_{n,k}$, $A_{n,k}$ and $\A_{n,k}$ be fixed as in the beginning of Section~\ref{subsec:dependence}. We start by noting that, for both situations, condition $\D_q^*(u_n)$ is proved directly from decay of correlations with the choices for $\phi$ and $\psi$ indicated in Section~\ref{subsubsec:D_q^*}. 
 
For the proof of $\D'_q(u_n)$  we use the fact that decay of correlations holds for all $\psi\in L^1$. We take $\phi=\psi=\I_{\A_{q,n}}$ and since $\|\I_{\A_{q,n}}\|_{\mathcal C_1}\leq C$, for some $C>0$, we have
\begin{align}
\label{eq:estimate1}
\mu\left(\A_{n,k}\cap T^{-j}(\A_{n,k})\right) \le
 %\left(\mu(\A_{n,k})\right)^2+ \left\| \I_{\A_{n,k}}\right\|_{\mathcal C_1} \left\| \I_{\A_{n,k}}\right\|_{L^1(\mu)} \rho_j\leq  
 \left(\mu(\A_{n,k})\right)^2+C\mu(\A_{n,k})\rho_j.
\end{align}
Recall that by \eqref{def:q}, $q$ is chosen so that $\lim_{n\to\infty}R_n^{(q)}=\infty$. Using estimate \eqref{eq:estimate1} it follows that there exists some constant $D>0$ such that
\begin{align*}
n\sum_{j=q+1}^{\lfloor n/k_n \rfloor}& \mu(\A_{n,k}\cap T^{-j}(\A_{n,k})) = n\sum_{j=R_n^{(q)}}^{\lfloor n/k_n \rfloor} \mu(\A_{n,k}\cap T^{-j}(\A_{n,k}))\\
&%\le n\big\lfloor\tfrac {n}{k_n}\big\rfloor\mu(\A_{n,k})^2 +n\,C\mu(\A_{n,k}) \sum_{j=R_n^{(q)}}^{\lfloor n/k_n \rfloor}\rho_j
\le \frac{\left(n\mu(\A_{n,k})\right)^2}{k_n} +n\,C\mu(\A_{n,k}) \sum_{j=R_n^{(q)}}^{\infty}\rho_j \leq D \Bigg(\frac{|A_k|^2}{k_n}+|A_k| \sum_{j=R_n^{(q)}}^{\infty}\rho_j \Bigg)\xrightarrow[n\to\infty]{}0.
\end{align*}
 
We consider now the first case and show that $q=0$ and $\nu=\leb$. To prove that $q=0$, we need to show that $\lim_{n\to\infty}R_n^{(0)}=\infty$, which follows by a simple continuity argument, since $T$ is continuous along the orbit of $\zeta$, which is a non-periodic point (see \cite[Lemma~3.1]{AFV15}). Since $q=0$, we have $\A_{n,k}=A_{n,k}$. Recalling that $\lim_{n\to\infty}n\mu(A_{n,k})=|A_k|$, then formula \eqref{eq:limit-Anq} gives that $\nu=\leb$. %Note that $c=1$, when $A_{n.k}$ is given by \eqref{eq:rectangles-n}. 
Observe that property \eqref{eq:density} of the measure $\mu$ was not needed in this case.

Let us consider first the case of the two-dimensional empirical REPP $N_n$, where  that $A_{n,k}$ is given by \eqref{eq:rectangles-n}. In this case, $n\mu(A_{n,k})\sim|A_k|$. Let us assume now that $\zeta$ is a repelling periodic point of period $p$, which expands uniformly in all directions, namely, there exists some $\alpha>1$ such that $DT^p_\zeta(v)=\alpha \,v$, for $v\in T_\zeta \X$. This implies that if $T^j(x)$ is very close to $\zeta$ then $\dist(T^{j+p}(x),\zeta)\approx \alpha\, \dist(T^j(x),\zeta)$. 
Observe that under \eqref{eq:density}, for $n$ large, $\mu$ behaves as Lebesgue measure and, hence, 
\begin{equation}
\label{eq:fact1}
n\mu(T^{-p}A_{n,k})\sim \alpha^{-d} |A_k|=|\alpha^{-1} A_k|.
\end{equation} 
Recall that  $A_{n,k}=\cup_{s=1}^{\varsigma_k} G_{n,k,s}$, where each $G_{n,k,s}$ corresponds to an annulus around $\zeta$ given by $G_{n,k,s}=B_{g^{-1}(u_n(\tau_{k,2s-1}))}(\zeta)\setminus B_{g^{-1}(u_n(\tau_{k,2s}))}(\zeta)$ and hence $A_{n,k}$ corresponds to a finite union of annuli around $\zeta$ that as $n$ grows get closer and closer to $\zeta$. In a small neighbourhood of $\zeta$, say $B_{\eps}(\zeta)$, the action of $T^p$ is diffeomorphically conjugate to that of $DT^p_{\zeta}$, which essentially corresponds to repelling the points by a factor $\alpha$. This means that in order to be able to choose an adequate $q$, we need to show that there exists some $q'\in\N$ for which $\lim_{n\to\infty}R_n(A_{n,k}^{(q')})=\infty$. For that purpose, we need to consider that besides the chance of an annulus hitting itself after $p$ iterates,  we have to pay attention to the possibility of the inner annuli, corresponding to smaller $s$, hitting an outer annulus corresponding to large $s$. Therefore, a good candidate for $q'$ would be $q'=p\left \lceil \frac{\log(\tau_{k,2\varsigma_k})-\log(\tau_{k,1})}{d\log\alpha}\right\rceil$, when $\tau_{k,1}>0$. In the case $\tau_{k,1}=0$, it should be replaced by $\alpha^{-d}\tau_{k,2}$, since the ball $B_{g^{-1}(u_n(\alpha^{-1}\tau_{k,2}))}$ has to be removed  from $\A_{n,k}$ to form $A_{n,k}^{(q')}$.   Note that $q/p$ is the smallest integer $j$ such that $\alpha^{-dj}\tau_{k,2\varsigma_k}<\tau_{k,1}$, which means that $\dist(T^{q'}(G_{n,k,1}),\zeta)=\dist(T^{q'}(A_{n,k}),\zeta)>g^{-1}(u_n(\tau_{k,2\varsigma_k}))$. Since by definition of $A_{n,k}^{(q')}$, we have $A_{n,k}^{(q')}\subset A_{n,k}\subset B_{g^{-1}(u_n(\tau_{k,2\varsigma_k}))}(\zeta)$ and $T^j(A_{n,k}^{(q')})\cap A_{n,k}^{(q')}=\emptyset$ for all $j=1, \ldots, q'$. Then, the choice of $q'$, allows to conclude that while $T^j(A_{n,k}^{(q')})\subset B_\eps(\zeta)$, we have that $T^j(A_{n,k}^{(q')})\cap A_{n,k}^{(q')}=\emptyset$. Since $g^{-1}(u_n(\tau_{k,2\varsigma_k}))=O(n^{-d})$, in order for $T^j(A_{n,k}^{(q')})$ to leave $B_\eps(\zeta)$, we need at least $O(\log n)$ iterations, which means $\lim_{n\to\infty}R_n(A_{n,k}^{(q')})=\infty$, where we are using the Bachmann-Landau big $O$ notation.

We now turn to the computation of $\nu$, for which we observe that 
\begin{equation}
\label{eq:Ankq}
A_{n,k}^{(q')}= A_{n,k}\setminus \left(\bigcup_{i=1}^{q'/p} T^{-i\,p} A_{n,k}\right)=A_{n,k}\setminus \left(\bigcup_{i=1}^{+\infty}T^{-i\,p} A_{n,k}\right).
\end{equation} Since for large $n$ the action of $T^p$ is given by that of $DT^p(\zeta)$, as in \eqref{eq:fact1}, we have
$$
\nu(A_k)=\lim_{n\to\infty}n\mu(\A_{n,k})=\left|A_k\setminus\bigcup_{i=1}^{+\infty} \alpha^{-i\cdot d} A_{k}\right|.
$$
Using Theorem~\ref{thm:indep-increments}, it follows that:
\begin{equation}
\label{eq:Ntilde}
\lim_{n\to\infty}\mu\left( N_n\left(\bigcup_{i=1}^m J_k\times A_k\right)=0\right)=\prod_{k=1}^m\e^{-|J_k|\nu(A_k)}.
\end{equation}
Hence, in order to check that $\tilde N$ given by \eqref{eq:repelling-limit-REPP-2d} verifies Criterion (A) of Kellenberg Theorem~\ref{thm:Kallenberg}, we are left to check $\p\left(\tilde N\left(\bigcup_{i=1}^m J_k\times A_k\right)=0\right)=\prod_{i=1}^m\e^{-|J_k|\nu(A_k)}$. Since the $\bar T_{i,j}$ are all independent then we only need to check that for each $k=1, \ldots,m$, we have $\p\left(\tilde N\left(J_k\times A_k\right)=0\right)=\e^{-|J_k|\nu(A_k)}$. By definition of $\tilde N$, it follows
\begin{align*}
\p\Big(\tilde N&\left(J_k\times A_k\right)=0\Big)=\e^{-\theta\tau_{k,2\varsigma_k}|J_k|}+\e^{-\theta\tau_{k,2\varsigma_k}|J_k|}\frac{(\theta\tau_{k,2\varsigma_k}|J_k|)}{1!}\frac{(\tau_{k,2\varsigma_k}-|\cup_{i=0}^\infty \alpha^{-i\cdot d}A_k|)}{\tau_{k,2\varsigma_k}}\\
&\quad+\e^{-\theta\tau_{k,2\varsigma_k}|J_k|}\frac{(\theta\tau_{k,2\varsigma_k}|J_k|)^2}{2!}\frac{(\tau_{k,2\varsigma_k}-|\cup_{i=0}^\infty \alpha^{-i\cdot d}A_k|)^2}{\tau_{k,2\varsigma_k}^2}+\ldots\\
&=\e^{-\theta\tau_{k,2\varsigma_k}|J_k|}\cdot \exp\left\{\frac{\theta\tau_{k,2\varsigma_k}|J_k|}{\tau_{k,2\varsigma_k}}(\tau_{k,2\varsigma_k}-|\cup_{i=0}^\infty \alpha^{-i\cdot d}A_k|)\right\}=\e^{-\theta|J_k||\cup_{i=0}^\infty \alpha^{-i\cdot d}A_k|}
\end{align*}
Now, we only need to verify that $\theta |\cup_{i=0}^\infty \alpha^{-i\cdot d}A_k|=|A_k\setminus\cup_{i=1}^{+\infty} \alpha^{-i\cdot d} A_{k}|$. Since $\theta=1-\alpha^{-d}$, then
\begin{align*}
\theta |\cup_{i=0}^\infty \alpha^{-i\cdot d}A_k|&=|\cup_{i=0}^\infty \alpha^{-i\cdot d}A_k|-\alpha^{-d}|\cup_{i=0}^\infty \alpha^{-i\cdot d}A_k|=|\cup_{i=0}^\infty \alpha^{-i\cdot d}A_k|-|\cup_{i=1}^\infty \alpha^{-i\cdot d}A_k|\\
&=|\cup_{i=0}^\infty \alpha^{-i\cdot d}A_k\setminus\cup_{i=1}^\infty \alpha^{-i\cdot d}A_k|=|A_k\setminus\cup_{i=1}^\infty \alpha^{-i\cdot d}A_k|.
\end{align*}

At this point, we check that $\tilde N$ also satisfies Kallenberg's criterion (B). Let $E=J\times G=[a,b)\times [\tau_1,\tau_2)$. Then,
\begin{align}
\E(N_n(E))&=\E\left(\sum_{j\geq0}\delta_{(j/n,u_n^{-1}(X_j))}(E)\right)=\E\left(\sum_{j=\lceil na\rceil}^{\lfloor nb\rfloor}\I_{u_n(\tau_2)<X_j\leq u_n(\tau_1)}\right)\nonumber \\
&\sim% n(b-a)\mu(u_n(\tau_2)<X_j\leq u_n(\tau_1))=
(b-a)n(\mu(X_j> u_n(\tau_2))-\mu(X_j> u_n(\tau_1)))\xrightarrow[n\to\infty]{}(b-a)(\tau_2-\tau_1).
\label{eq:Kallenberg2}
\end{align}
Thus, we need to check that $\E(\tilde N(E))=|E|=|J| |G|=(b-a)(\tau_2-\tau_1)$.

Let $N$ stand for the 2-dimensional homogeneous Poisson process given by \eqref{Poisson-2d} (such that $\E(N(E))=\leb(E)$). Then rewriting $\tilde N$ as
\[\tilde N=\sum_{i,j=1}^\infty\sum_{\ell=0}^{\infty}\delta_{(\theta^{-1}T_{i,j},\,\alpha^{\ell d}\,U_{i,j})}\]
with $\bar T_{i,j}\stackrel[]{D}{\sim}\text{Exp}(1)$, for $E=J\times G$, it follows that:
\begin{align*}
\E(\tilde N(E))&=\E\left(\sum_{i,j=1}^\infty\sum_{\ell=0}^{\infty}\delta_{(\theta^{-1}T_{i,j},\,\alpha^{\ell d}\,U_{i,j})}(E)\right)=\sum_{\ell=0}^{\infty}\E\left(\sum_{i,j=1}^\infty\delta_{(\theta^{-1}T_{i,j},\,\alpha^{\ell d}\,U_{i,j})}(J\times G)\right)\\
&=\sum_{\ell=0}^{\infty}\E\left(\sum_{i,j=1}^\infty\delta_{(T_{i,j},\,U_{i,j})}(\theta J\times\alpha^{-\ell d}G))\right)=\sum_{\ell=0}^{\infty}\E(N(\theta J\times\alpha^{-\ell d}G))\\
&=\sum_{\ell=0}^{\infty}\leb(\theta J\times\alpha^{-\ell d}G)=\sum_{\ell=0}^{\infty}\theta\alpha^{-\ell d}\leb(J\times G)=\frac{\theta\leb(E)}{1-\alpha^{-d}}=\leb(E).
\end{align*}
Let us consider now the case of the empirical multi-dimensional REPP, $N_n^*$, given in Definition~\ref{def:multi-dimensional-REPP}. In this case, $\zeta$ is an hyperbolic repelling point but does not necessarily expand  uniformly in all directions. As before, since for large $n$ the normalisation used in \eqref{eq:rectangles-n-bi} implies that $A_{n,k}$ is contained in ball of radius $O(n^{-1/d})$ around $\zeta$, then the action of $T^p$ is approximated by that of $DT^p_\zeta$, but now the latter is not necessarily described by a multiplication by the factor $\alpha$, since now the expansion rates may be different in different directions. Of course that if $\X$ is one-dimensional, then we are again reduced to the previous case. The argument flows in the same way with slight adjustments. Using \eqref{eq:density}, we observe that the normalisation was made such that
\begin{align}
n\mu(A_{n,k})&\sim n\frac{d\mu}{d\leb}(\zeta)\left|g^{-1}(u_n(1))|B_1(\zeta)|^{1/d}A_{k}\right|\sim n\frac{d\mu}{d\leb}(\zeta)|B_1(\zeta)| (g^{-1}(u_n(1)))^d |A_{k}|\nonumber\\
&\sim n\frac{d\mu}{d\leb}(\zeta)\left|B_{g^{-1}(u_n(1))}(\zeta)\right||A_{k}|\sim n\mu\left(B_{g^{-1}(u_n(1))}(\zeta)\right) |A_{k}|\sim  |A_{k}|.
\label{eq:normalisation}
\end{align}

Let $\gamma_n^+=\sup\{\dist(x,\zeta):\,x\in \cup_{k=1}^m A_{n.k}\}$, $\gamma_n^-= \inf\{\dist(x,\zeta):\,x\in \cup_{k=1}^m A_{n.k}\}$. Also let $\beta^+$ and $\beta^-$ denote, respectively, the largest and smallest, in absolute value, eigenvalues  of $DT^p(\zeta)$. In this case, we take $q'=p\lceil\frac{\log(\gamma_n^+)-\log(\gamma_n^-)}{\log\beta^-}\rceil$, when $\gamma_n^->0$. In the case $\gamma_n^-=0$, we replace it by $(\beta^+)^{-1}\gamma_n^\partial$, where $\gamma_n^\partial= \inf\{\dist(x,\zeta):\,x\in \cup_{k=1}^m \partial A_{n.k}\}$ and $\partial A_{n.k}$ denotes the border of $A_{n.k}$. Let $x$ be such that $\dist(x,\zeta)=\gamma_n^-$. This choice of $q'$ guarantees that $\dist(T^{q'p}(x),\zeta)>\gamma_n^+$. Hence, as in the previous case, it follows that $\lim_{n\to\infty}R_n(A_{n,k}^{(q')})=\infty$. Observe that in this case, formula \eqref{eq:Ankq} still holds and since for large $n$ the action of $T^p$ is given by that of $DT^p(\zeta)$ and, by linearity of $DT^p(\zeta)$, we have $A_{n,k}\setminus \left(\bigcup_{i=1}^{+\infty}DT_\zeta^{-i\,p} A_{n,k}\right)=g^{-1}(u_n(1))|B_1(\zeta)|^{1/d}\left(A_{k}\setminus \left(\bigcup_{i=1}^{+\infty}DT_\zeta^{-i\,p} A_{k}\right)\right)$, then, in the same away as in the estimate obtained for $n\mu(A_{n,k})$, we get: 
$$
\nu(A_k)=\lim_{n\to\infty}n\mu(\A_{n,k})=\left|A_k\setminus\bigcup_{i=1}^{+\infty} DT_\zeta^{-i \,p} (A_{k})\right|.
$$
\begin{figure}[h]
\includegraphics[height=5cm]{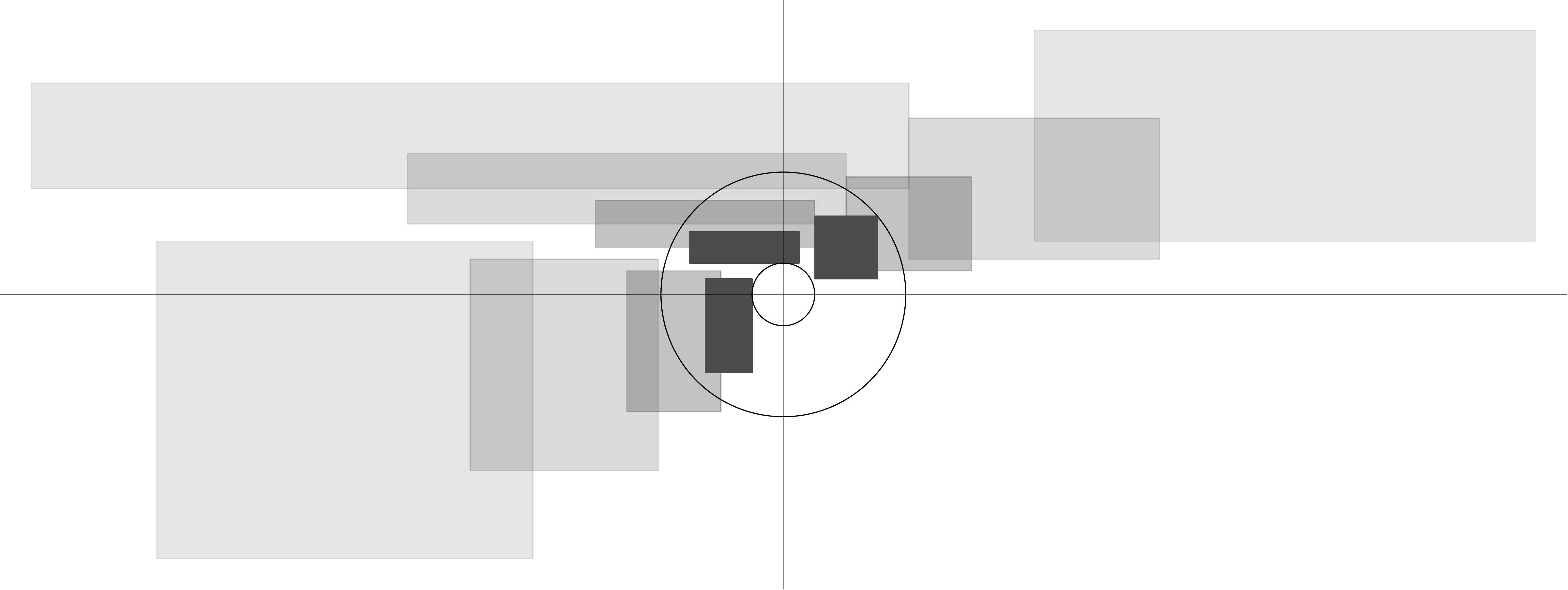}
\caption{In this picture, for some nice coordinates system chosen, the set $A_{n.k}$ corresponds to the darker rectangles, whose subsequent images by $DT_\zeta^p$ are depicted in lighter and lighter tones. The circles have radii $\gamma_n^+>\gamma_n^-$, respectively. The superpositions correspond to portions that need to be removed from $A_{n.k}$ to form $A_{n.k}^{(q')}$. Note that the lighter rectangles do not intersect $A_{n.k}$, which means that we can take $q'=3p$, in this case.}
\label{fig:ring-notation}
\end{figure}

In order to check that $N^*$ given by \eqref{eq:repelling-limit-REPP-multi-d} is consistent with this formula for $\nu$, we observe that
\begin{align*}
\p\Big(N^*\left(J_k\times A_k\right)=0\Big)&=\e^{-\theta|B_{\gamma^+}(\zeta)||J_k|}\cdot \exp\left\{\frac{\theta|B_{\gamma^+}(\zeta)||J_k|}{|B_{\gamma^+}(\zeta)|}|B_{\gamma^+}(\zeta)\setminus\cup_{i=0}^\infty  DT_\zeta^{-i \,p} (A_{k})|\right\}\\
&=\e^{-\theta|J_k||\cup_{i=0}^\infty  DT_\zeta^{-i \,p} (A_{k})|}
\end{align*}
where $\gamma^+=\sup\{\dist(x,\zeta):\,x\in \cup_{i=1}^m A_k\}$, which means that $\cup_{i=0}^\infty  DT_\zeta^{-i \,p} (A_{k})\subset B_{\gamma^+}(\zeta)$. Recalling that $\theta=1-|\det(DT^{-p}_\zeta)|$, we have
\begin{align*}
\theta |\cup_{i=0}^\infty  DT_\zeta^{-i \,p} &(A_{k})|=|\cup_{i=0}^\infty  DT_\zeta^{-i \,p} (A_{k})|-|\det(DT^{-p}_\zeta)||\cup_{i=0}^\infty  DT_\zeta^{-i \,p} (A_{k})|\\
&=|\cup_{i=0}^\infty  DT_\zeta^{-i \,p} (A_{k})|-|\cup_{i=1}^\infty  DT_\zeta^{-i \,p} (A_{k})|=|A_k\setminus\cup_{i=1}^\infty  DT_\zeta^{-i \,p} (A_{k})|.
%&=|A_k\setminus\cup_{i=1}^\infty \alpha^{-i\cdot d}A_k|+|\cup_{i=1}^\infty \alpha^{-i\cdot d}A_k|-|\cup_{i=1}^\infty \alpha^{-i\cdot d}A_k|=|A_k\setminus\cup_{i=1}^\infty \alpha^{-i\cdot d}A_k|
\end{align*}
This proves Kallenberg's criterion (A) for $N^*$. Now, we turn to criterion (B). Let $E=J\times  G=[a.b)\times([e_1,f_1)\times\ldots\times[e_d,f_d)).$ By stationarity and repeating the argument in \eqref{eq:normalisation}, we obtain
\begin{align*}
\E(N_n^*(E))&=\E\left(\sum_{j=\lceil na\rceil}^{\lfloor nb\rfloor}\I_{\Phi_\zeta\left(g^{-1}(u_n(1))|B_1(\zeta)|^{1/d}G\right)}\circ T^j\right)\\
&\sim(b-a)n\mu\left(\Phi_\zeta\left(g^{-1}(u_n(1))|B_1(\zeta)|^{1/d}G\right)\right)\xrightarrow[n\to\infty]{}(b-a)|G|.
\end{align*}
In order to check that $\E(N^*_\theta(E))=(b-a)|G|$, we start by defining the following sequences sets. Let $G^{1*}=G\cap DT_\zeta^{-p}(G)$ and, for each $j\geq 2$, set $G^{j*}=G^{j-1*}\cap DT_\zeta^{-p}(G^{j-1*})$. Then, let $G^{(0)}=B_{\gamma^+}(\zeta)\setminus \cup_{i\geq 0} DT_\zeta^{-ip}(G)$, $G^{(1)}=\cup_{i\geq 0} DT_\zeta^{-ip}(G)\setminus \cup_{i\geq 0} DT_\zeta^{-ip}(G^{1*})$ and, for each $j\geq 2$, set $G^{(j)}=\cup_{i\geq 0} DT_\zeta^{-ip}(G^{j-1*})\setminus \cup_{i\geq 0} DT_\zeta^{-ip}(G^{j*})$. 

Observe that if $0\notin \bar G$, then there exist an $m\in\N$ such that $G^{j*}=\emptyset$ for all $j\geq m$ and, consequently, $G^{(j)}=\emptyset$ for all $j\geq m+1$. Note also that $(G^{(j)})_{j\in\N_0}$ forms a disjoint partition of $\cup_{i\geq 0} DT_\zeta^{-ip}(G)$. Moreover, we have (see Figure~\ref{fig:decomposition})
\begin{equation}
\label{eq:return-average-formula}
\sum_{j=0}^\infty \left| DT_\zeta^{-jp}(G)\right|=\sum_{j=1}^\infty j \left|G^{(j)}\right|.
\end{equation}
\begin{figure}[h]
\includegraphics[width=6cm]{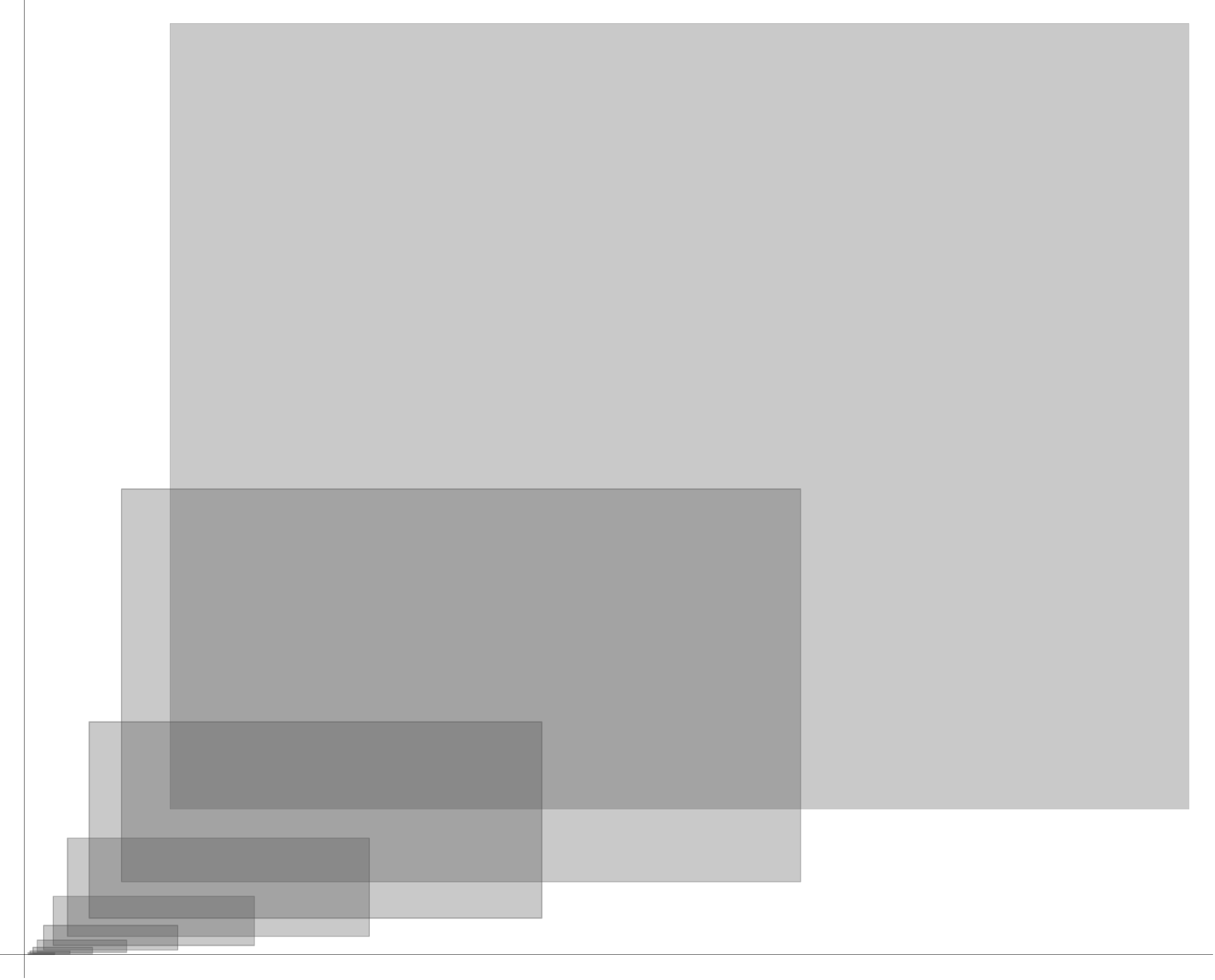}\quad\includegraphics[width=6cm]{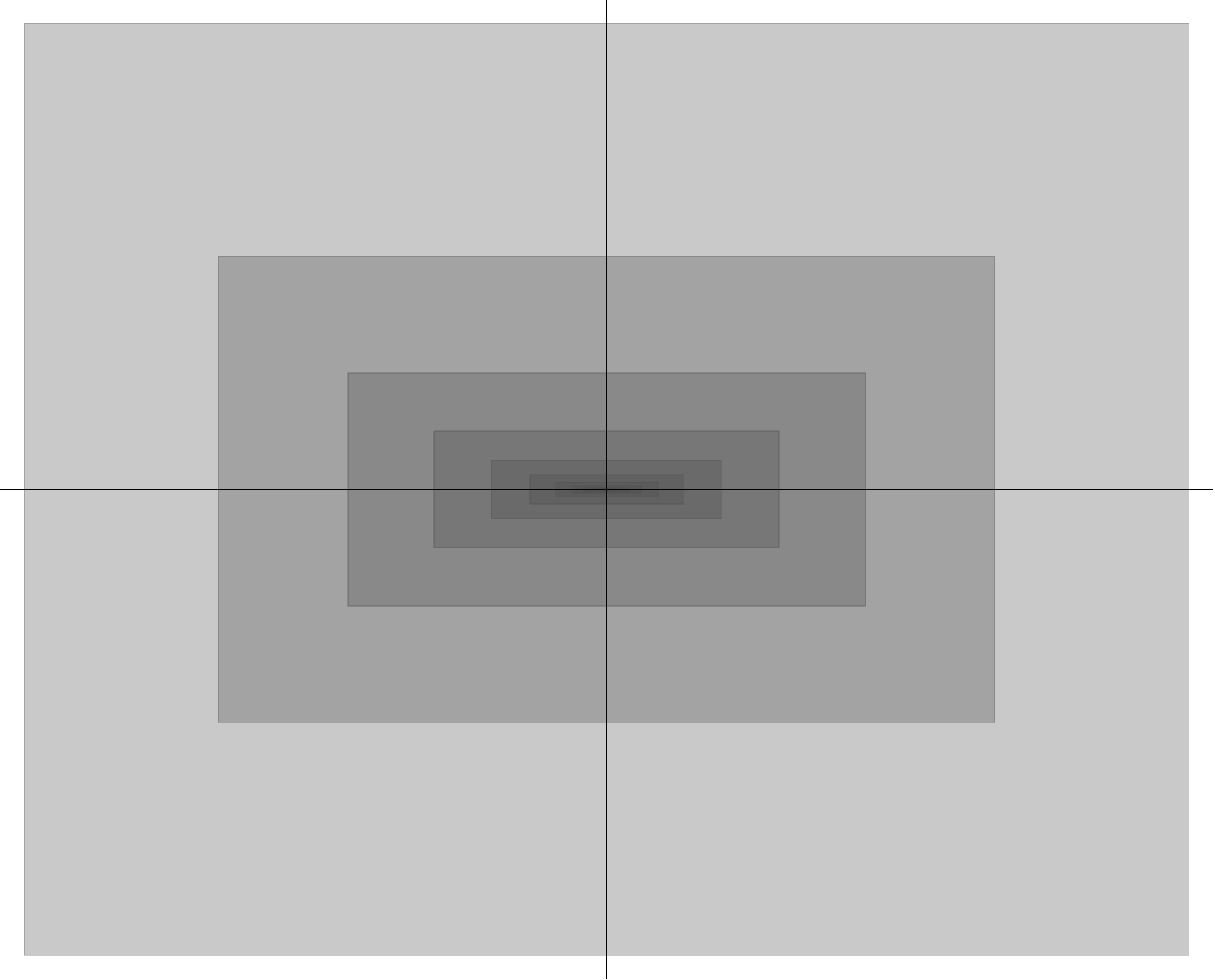}
\caption{In both pictures the set $G$ corresponds to the biggest rectangle. The smaller and smaller rectangles are its preimages by $DT_\zeta^{p}$. A transparency effect was used so that the superpositions can be detected and help us to identify the sets $G^{(j)}$. In  the left case, $0\notin G$ and $m=3$. Observe that $G^{(1)}$ is the union of the lighter regions, $G^{(2)}$ the medium grey parts and $G^{(3)}$ the union of the darker rectangles. In the right case, the sets $G^{(j)}$ correspond to the rectangular annuli that get darker and darker as $j$ increases.}
\label{fig:decomposition}
\end{figure}

Using the stability of the Poisson distribution we can write that 
$$
N^*(E)=N^*(J\times G)=\sum_{i=1}^M \sum_{\ell=0}^\infty \I_{DT_\zeta^{\ell p}(U_i)\in G},
$$
where $M$ is a Poisson r.v. of mean $\theta (b-a) |B_{\gamma^+}(\zeta)|$, $(U_i)_{i\in\N}$ is an  iid sequence with $U_i\sim \U_{B_{\gamma^+}(\zeta)}$ and independent of $M$. Observing that
$U_i\in G^{(j)}\iff \sum_{\ell=0}^\infty \I_{DT_\zeta^{\ell p}(U_i)\in G}=j$, for all $j\in \N_0$, then using \eqref{eq:return-average-formula} and recalling that $\theta=1-\left| \det \left(DT_\zeta^{-p}\right)\right|$, it follows
\begin{align*}
\E(\tilde N(E))&=\E(M)\E\left(\sum_{\ell=0}^\infty \I_{DT_\zeta^{\ell p}(U_i)\in G}\right)=\theta (b-a) |B_{\gamma^+}(\zeta)|\sum_{j=0}^\infty j\, \frac{|G^{(j)}|}{|B_{\gamma^+}(\zeta)|}\\
%&=\theta (b-a) \tau_2\left[1\frac{|L_1|}{\tau_2}+2\frac{|L_2|}{\tau_2}+\ldots+m\frac{|L_m|}{\tau_2}+m\frac{|L_{m+1}^*|}{\tau_2}+(m+1)\frac{|L_{m+1}|}{\tau_2}\right]\\
&=\theta (b-a)\sum_{i=0}^\infty \left| DT_\zeta^{-ip}(G)\right|=\theta (b-a)\sum_{i=0}^\infty \left| \det \left(DT_\zeta^{-p}\right)\right|^i|G|=(b-a)|G|.
\end{align*}
\end{proof}

\begin{proof}[Proof of Corollary~\ref{cor:dichotomy}]
We start by noting that $\mu(N_n^1(J)=k)=\mu(N_n(J\times [0,\tau))=k)\sim \mu(N_n^*(J\times B_{(\tau/|B_1(\zeta)|)^{1/d}}(\zeta))=k)$, which implies that $\p(N^1(J)=k)=\p(N(J\times [0,\tau))=k)= \p(N^*(J\times B_{(\tau/|B_1(\zeta)|)^{1/d}}(\zeta))=k)$. 

Observe that if $j=(\tau/|B_1(\zeta)|)^{1/d}\in\N$, then letting $Z=\min\{T_{1,1}, T_{2,1}, \ldots, T_{j,1}\}$, we have $Z\sim\exp(\sum_{i=1}^j\theta\leb(B_i(\zeta)\setminus B_{i-1}(\zeta)))=\exp(\theta\tau)$. Moreover, if we define the r.v. $U:=U_{i,1}$ whenever $Z=T_{i,1}$ for each $i=1,\ldots,j$, then $U\sim \U(B_{(\tau/|B_1(\zeta)|)^{1/d}}(\zeta))$, wich means that $U$ is a random point chosen on $B_{(\tau/|B_1(\zeta)|)^{1/d}}(\zeta)$ according to normalised Lebesgue measure, \ie $\leb/\tau$. Now observe that when $(\tau/|B_1(\zeta)|)^{1/d}\notin\N$, by letting $j=\lfloor(\tau/|B_1(\zeta)|)^{1/d}\rfloor+1$ and setting $T_{j,1}\sim\exp(\theta |B_{(\tau/|B_1(\zeta)|)^{1/d}}(\zeta)\setminus B_{j-1}(\zeta)|)$, $U_{i,j}\sim \U(B_{(\tau/|B_1(\zeta)|)^{1/d}}(\zeta)\setminus B_{j-1}(\zeta))$, we still obtain that $Z\sim\exp(\theta\tau)$ and $U\sim \U(B_{(\tau/|B_1(\zeta)|)^{1/d}}(\zeta))$.

Hence, we only need to check that the higher dimensional processes provides the same multiplicity distribution as expected for the limit process $\tilde N^1$. Recall that $\theta=1-|\det(DT_\zeta^p)|^{-1}$. %Veja-se Exemplo 33.5 da página 432 do Billingsley
\begin{align*}
\pi(k)&=\p(D_i=k)=\p(N^1(\{t\})=k| \,N^1(\{t\})>0)\\
&=\p(N^*(\{t\}\times B_{(\tau/|B_1(\zeta)|)^{1/d}}(\zeta))=k| \,N^1(\{t\}\times B_{(\tau/|B_1(\zeta)|)^{1/d}}(\zeta))>0)\\
&=\p(U\in DT_\zeta^{-(k-1)p}(B_{(\tau/|B_1(\zeta)|)^{1/d}}(\zeta))\setminus DT_\zeta^{-kp}(B_{(\tau/|B_1(\zeta)|)^{1/d}}(\zeta)))\\
%&=\left|DT_\zeta^{-(k-1)p}(B_{(\tau/|B_1(\zeta)|)^{1/d}}(\zeta))\setminus DT_\zeta^{-kp}(B_{(\tau/|B_1(\zeta)|)^{1/d}}(\zeta))\right|\\
&=\left|DT_\zeta^{-(k-1)p}(B_{(\tau/|B_1(\zeta)|)^{1/d}}(\zeta))\right|/\tau-\left|DT_\zeta^{-kp}(B_{(\tau/|B_1(\zeta)|)^{1/d}}(\zeta))\right|/\tau\\
&=(1-\theta)^{k-1}\theta\left|B_{(\tau/|B_1(\zeta)|)^{1/d}}(\zeta)\right|/\tau=(1-\theta)^{k-1}\theta .
\end{align*}
\end{proof}

\begin{proof}[Proof of Corollary~\ref{cor:two-dimensional}]
The proof follows from  noting that 
$$\mu(N_n(J\times (\tau_1,\tau_2])=k)\sim \mu(N_n^*(J\times (B_{(\tau_2/|B_1(\zeta)|)^{1/d}}(\zeta)\setminus B_{(\tau_1/|B_1(\zeta)|)^{1/d}}(\zeta)))=k),$$ which implies that 
$$\p(N^\dag(J\times (\tau_1,\tau_2])=k)= \p(N^*(J\times (B_{(\tau_2/|B_1(\zeta)|)^{1/d}}(\zeta)\setminus B_{(\tau_1/|B_1(\zeta)|)^{1/d}}(\zeta)))=k).$$ 
From this observation, it follows that the outer measure $\nu$ in this case is given by:
\begin{equation}
\label{eq:outer-measure-general-2d}
\nu(A_k)=\nu(\cup_{s=1}^{\varsigma_k}(\tau_{k,2s-1}, \tau_{k,2s}])=\left|\tilde A_k \setminus\bigcup_{i=1}^{+\infty} DT_\zeta^{-i \,p} (\tilde A_k)\right|,
\end{equation}
where $\tilde A_k=\bigcup_{s=1}^{\varsigma_k}(B_{(\tau_{k,2s}/|B_1(\zeta)|)^{1/d}}(\zeta)\setminus B_{(\tau_{k,2s-1}/|B_1(\zeta)|)^{1/d}}(\zeta))$. The fact that $N^\dag$ corresponds to this outer measure $\nu$ follows as in the proof of similar statements for $\tilde N$ and $N^*$ in the proof of Theorem~\ref{thm:convergence-hyperbolic}.
\end{proof}

\begin{remark}
\label{rem:Mori-Hsing}
The Mori-Hsing characterisation \eqref{Mori-Hsing}, for the limiting 2-dimensional process of $N_n$, follows once four conditions labelled as (A1)--(A4), in \cite{H87}, are verified. The main result in \cite[Theorem~4.5]{H87} essentially asserts that under condition $\Delta$ (similar to Leadbetter's $D(u_n)$) the limiting process satisfies these four conditions and therefore can be written as in  \eqref{Mori-Hsing}.  In this context, we have that,  as in the proof of \cite[Lemma~4.3]{H87}, (A1) follows from stationarity and (A3) is easily proved. Under conditions $\D^*_q(u_n)$ and $\D'_q(u_n)$, property (A4) follows from Theorem~\ref{thm:indep-increments}. Using the same theorem, if the outer measure $\nu$ satisfies an homogeneity property, \ie $\nu(\beta A)=\beta\nu(a)$ for all $\beta>0$, then property (A2) is also easily verified. A close look to \eqref{eq:outer-measure-ex-1}, \eqref{eq:outer-measure-general-2d} and \eqref{eq:outer-measure-eventually-aperiodic} reveals that in all these cases $\nu$ has the homogeneity property  and consequently (A2) holds and therefore the corresponding limit processes $\tilde N$, $N^\dag$ and $\hat N$ are of Mori-Hsing type.

\end{remark}

\subsection{Worked out examples}\label{subsec:examples}

In this section, we give some concrete examples to portray the applicability of the previous results and also how we can easily extend them to situations when the maximal set is made out of a discontinuity point $\zeta$ or multiple correlated points, in order to illustrate the full power of the theory.  

\begin{example}
\label{ex:2d}
Consider a two-dimensional uniformly expanding map, for definiteness take $f:S^1\times S^1\to S^1\times S^1$, given by $f(x,y)=(2x\,\mbox{mod}\,1, 3y\,\mbox{mod}1)$. This is a product map for which the 2-dimensional Lebesgue measure is invariant. Note that for all $\zeta$, we have $$Df_\zeta=\begin{bmatrix}2&0\\0&3
\end{bmatrix}.
$$ Clearly, Theorem~\ref{thm:convergence-hyperbolic} applies and, consequently, when $\zeta$ is non-periodic, $N_n$ and $N_n^*$ converge to a 2-dimensional Poisson process and a 3-dimensional Poisson process, respectively, with intensity measure given by Lebesgue measure. (Note that in this case, the multi-dimensional process $N_n^*$ is defined on the 3-dimensional space: $[0,\infty)\times \R^2$). Observe that for periodic points $\zeta$, Theorem~\ref{thm:convergence-hyperbolic} only gives the convergence of $N_n^*$ and not that of $N_n$ since the expansion rates are distinct, namely, 2 in the horizontal direction and 3 in the vertical direction. To obtain the limit for $N_n$, we need to use Corollary~\ref{cor:two-dimensional}. In order to provide a concrete example we assume that $\zeta=0$ and observe that the limiting process $N^\dag$, in this case, can be written as:
\begin{equation}
\label{eq:N-dag-example}
N^\dag=\sum_{i,j=1}^\infty\sum_{\ell=0}^{\infty} \delta_{\left(T_{i,j},\,\left|B_{\dist\left(\sqrt{U_{i,j}/\pi}Df_\zeta^{\ell }( \cos\Theta_{i,j},\sin\Theta_{i,j}),0\right)}(\zeta)\right|\right)},
\end{equation}
where  $T_{i,j}$ is given by \eqref{T-matrix}, for $\bar T_{i,j}\stackrel[]{D}{\sim}\text{Exp}\left(\theta\right)$, with $\theta=1-|\det^{-1}(DT^p_\zeta)|$; $U_{i,j}\stackrel[]{D}{\sim} \mathcal U_{(i-1,i]}$; $\Theta_{i,j}\sim \U_{[0,2\pi)}$ and $(\bar T_{i,j})_{i,j\in\N}$, $(U_{i,j})_{i,j\in\N}$ and $(\Theta_{i,j})_{i,j\in\N}$ are mutually independent. Observe that for $\ell=0$, we have $\left|B_{\dist\left(\sqrt{U_{i,j}/\pi}Df_\zeta^{\ell }( \cos\Theta_{i,j},\sin\Theta_{i,j}),0\right)}(\zeta)\right|=U_{i,j}$, which is coherent with the Mori-Hsing representation given in \eqref{Mori-Hsing}.

\end{example}

In order to illustrate different types of limiting point processes, described by different outer measures $\nu$, we consider different types of maximal sets. The situation that follows appears naturally, for example, when the maximal set is built up by two consecutive points of the orbit of some non-periodic point $\zeta$ or when the maximal set is reduced to a single point $\zeta$ which is a discontinuity point of the map. Since this last situation was particularly treated in \cite{AFV15}, for comparison purposes, we will use it to exhibit the occurrence of different limiting point processes. 

\begin{example}
\label{ex:discontinuous}
We consider the case of a singly returning eventually aperiodic point $\zeta$ studied in \cite[Section~3.3]{AFV15}. Here, we make some simplifications in order to make the presentation lighter but our result implies the convergence of the 1-dimensional empirical REPP  covered by statement 2(b) of  \cite[Proposition~3.5]{AFV15}.  Let $f$ be a topologically mixing Rychlik map as described in \cite{R83} or \cite[Section~3.2.1]{AFV15}. For simplicity and definiteness, we assume that we are using the usual metric and the invariant measure $\mu=\leb$. Let $\zeta$ be a discontinuity point of $f$, where the right and left derivatives are defined and take values $\beta^+$ and $\beta^-$, respectively. We further assume that $f(\zeta^+)=\zeta^-$ and $f^j(\zeta^-)\neq\zeta$, for all $j\in\N$ (see Figure~\ref{fig:aperiodic-point}). Note that the points to the left of $\zeta$ take a long time to return near $\zeta$. For the points to the right of $\zeta$, a part corresponding to the points of the thinner line segment also take a long time to return close to $\zeta$, while the points of the thicker line segment return immediately after one iteration but they return to the left of $\zeta$, which means that after that, they will also take a long time to return again to the small neighbourhood of $\zeta$ depicted in the picture.   
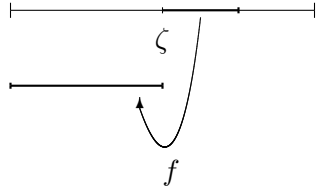
\begin{figure}
\setlength{\unitlength}{1cm}
\begin{picture}(2,2)(-1,-1)
% the set $U_n$
\put(-2,0){\line(1,0){4}}
%left end point
\put(-2,0){\line(0,1){0.1}}
\put(-2,0){\line(0,-1){0.1}}
%right end point
\put(2,0){\line(0,1){0.1}}
\put(2,0){\line(0,-1){0.1}}
% centre
\put(0,0){\line(0,1){0.04}}
\put(0,0){\line(0,-1){0.04}}
\put(-0.1,-0.5){$\zeta$}
\thicklines
\put(0,0){\line(1,0){1}}
\put(1,0){\line(0,1){0.04}}
\put(1,0){\line(0,-1){0.04}}
%after iteration
\put(0,-1){\line(-1,0){2}}
\put(0,-1){\line(0,1){0.04}}
\put(0,-1){\line(0,-1){0.04}}
\put(-2,-1){\line(0,1){0.04}}
\put(-2,-1){\line(0,-1){0.04}}
\thinlines
%\put(-2,-1){\line(-1,0){2}}
%\put(-4,-1){\line(0,1){0.1}}
%\put(-4,-1){\line(0,-1){0.1}}

%iteration arrow
\qbezier(0.5,-0.1)(0.2,-2.75)(-0.3,-1.3)
\put(-0.3,-1.25){\vector(0,0){0.1}}
%\put(0.25,-0.4){\line(0,-1){1}}
%\put(0,-1.4){\oval(0.5,0.5)[b]}
%\put(-0.25,-1.5){\vector(0,1){0.25}}
\put(0,-2.25){$f$}
%\put(-3,-3){\small Fig.2 - Singly returning, eventually aperiodic $\zeta$}
\end{picture}
\vspace{1cm}
\caption{Singly returning eventually aperiodic point}\label{fig:aperiodic-point}
\end{figure}
The sets $A_{n,k}$, in this case, correspond to the union of $2\varsigma_k$ disjoint intervals which are placed symmetrically to the left and right side of $\zeta$. When computing $\A_{n,k}$ we note that the intervals of the left (which correspond to half of the measure of $A_{nk,}$) have no exclusions, while the ones on the right may have exclusions because some of the inner intervals (closer to $\zeta$) may hit the left side intervals. It is not hard to verify that in this case, we have:
\begin{equation}
\label{eq:outer-measure-eventually-aperiodic}
\nu(A_k)=\frac12|A_k|+\frac12|A_k\setminus (\beta^+)^{-1}A_k|.
\end{equation} 
In this situation the limiting process can be written as:
\begin{equation}
\label{eq:repelling-limit-REPP-2d-special}
 \hat N=\sum_{i,j=1}^\infty\sum_{\ell=0}^{1}\left(\delta_{(T_{i,j},\, U_{i,j})}\I_{\{Z_{i,j}=0\}}+\delta_{(T_{i,j},\,(\beta^+)^{\ell }\, U_{i,j})}\I_{\{Z_{i,j}=1\}}\right),
\end{equation}
where  $T_{i,j}$ is given by \eqref{T-matrix}, for $\bar T_{i,j}\stackrel[]{D}{\sim}\text{Exp}(\theta)$, with $\theta=1-(\beta^+)^{-1}/2$, $U_{i,j}\stackrel[]{D}{\sim} \mathcal U_{(i-1,i]}$, $\p(Z_{i,j}=0)=\frac{2\theta-1}{2\theta}$, $\p(Z_{i,j}=1)=\frac{1}{2\theta}$ and $(\bar T_{i,j})_{i,j\in\N}$, $(U_{i,j})_{i,j\in\N}$ and $(Z_{i,j})_{i,j\in\N}$ are mutually independent. Let $\gamma:=\p(Z_{i,j}=1)$. We are only left to check that $\hat N$ is compatible with $\nu$ given by \eqref{eq:outer-measure-eventually-aperiodic}. 
\begin{align*}
\p\Big(\hat N\left(J_k\times A_k\right)=0\Big)&=\e^{-\theta\tau_{k,2\varsigma_k}|J_k|}\cdot \e^{\theta\tau_{k,2\varsigma_k}|J_k|\left((1-\gamma)\frac{\tau_{k,2\varsigma_k}-|A_k|}{\tau_{k,2\varsigma_k}}+\gamma\frac{\tau_{k,2\varsigma_k}-|A_k\cup (\beta^+)^{-1}A_k|}{\tau_{k,2\varsigma_k}}\right)}\\
&=\exp\left\{-\theta(1-\gamma)|J_k||A_k|-\theta\gamma|J_k||A_k\cup (\beta^+)^{-1}A_k|\right\}
\end{align*}
Finally, recalling that $\gamma=1/2\theta$, we have
\begin{align*}
\theta&(1-\gamma)|A_k|+\theta\gamma|A_k\cup (\beta^+)^{-1}A_k|=\frac{2\theta-1}{2}|A_k|+\frac12|A_k\setminus (\beta^+)^{-1}A_k| +\frac12|(\beta^+)^{-1}A_k|\\
&=\frac12(1-(\beta^+)^{-1})|A_k|+\frac12|A_k\setminus (\beta^+)^{-1}A_k| +\frac12|(\beta^+)^{-1}A_k|=\frac12|A_k|+\frac12|A_k\setminus (\beta^+)^{-1}A_k|.
%&=|A_k\setminus\cup_{i=1}^\infty \alpha^{-i\cdot d}A_k|+|\cup_{i=1}^\infty \alpha^{-i\cdot d}A_k|-|\cup_{i=1}^\infty \alpha^{-i\cdot d}A_k|=|A_k\setminus\cup_{i=1}^\infty \alpha^{-i\cdot d}A_k|
\end{align*}
We focus now on the second criterion of Kallenberg. Let  $E=J\times G$. The same argument as in \eqref{eq:Kallenberg2} gives us that $\lim_{n\to\infty}\E(N_n(E))=|E|$. Recall that $N$ stands for the 2-dimensional homogeneous Poisson process given by \eqref{Poisson-2d} (such that $\E(N(E))=\leb(E)$).  Then 
\begin{align*}
\E(\tilde N(E))&=(1-\gamma)\E\left(\sum_{i,j=1}^\infty\delta_{(\theta^{-1}T_{i,j}, U_{i,j})}(E)\right)+\gamma\E\left(\sum_{i,j=1}^\infty\sum_{\ell=0}^{1}\delta_{(\theta^{-1}T_{i,j},(\beta^+)^{\ell }\,U_{i,j})}(E)\right)\\
&=(1-\gamma)\E\left(\sum_{i,j=1}^\infty\delta_{(T_{i,j}, U_{i,j})}(\theta J\times G)\right)+\beta\sum_{\ell=0}^{1}\E\left(\sum_{i,j=1}^\infty\delta_{(T_{i,j},U_{i,j})}(\theta J\times (\beta^+)^{-\ell }G)\right)\\
&=(1-\gamma)\E(N(\theta J\times G))+\gamma\E(N(\theta J\times G))+\gamma\E(N(\theta J\times (\beta^+)^{-1}G))\\
&=|J\times G|((1-\gamma)\theta+\gamma\theta+\gamma\theta(\beta^+)^{-1})=|E|.
\end{align*}

\end{example}

Next, we consider an example where the maximal set has two non-periodic points belonging to the same orbit as in \cite[Section~4]{AFFR16}. There are a couple of novelties here, when compared to the previous cases. Namely, in certain occasions, once a point is marked, the second point on the vertical pile is marked below and not above the first one. Moreover, the correction to the frequency of time occurrences is not the EI itself, anymore.

\begin{example}
\label{ex:exception}
Let $f:[0,1]\to[0,1]$, be such that $f(x)=3x\,\text{mod1}$. Also set $\varphi:[0,1]\to\R$, such that $\varphi(x)=\max\{0,1-100|x-\pi/16|\}+\max\{0,1-10|x-\pi/16|\}$. Observe that $\varphi$ is maximised at $\pi/16$ and $f(\pi/16)=3\pi/16$, which are not periodic points. Observe that 
$$
\{X_0>u\}=B_{\frac{1-u}{100}}\left(\pi/{16}\right)\cup B_{\frac{1-u}{10}}\left(3\pi/{16}\right)=:\mathcal V_1(u)\cup \mathcal V_2(u).
$$
Note that $|\mathcal V_1(u)|/|\{X_0>u\}|=1/11$ and $|\mathcal V_2(u)|/|\{X_0>u\}|=10/11$.
Since we want \eqref{un} to hold, we take $|\{X_0>u_n\}|=\frac{2(1-u_n)}{100}+\frac{2(1-u_n)}{10}=\tau/n$ and hence $u_n(\tau)=1-\frac{100}{11}\frac\tau{2n}$, which in turn implies that $u_n^{-1}(z)= 2n\frac{11}{100}(1-z)$.

Given $A_k$ as in \eqref{eq:rectangles}, we see that $A_{n,k}$ corresponds to $2\varsigma_k$ intervals symmetrically placed in each $\mathcal V_i(u_n(\tau_{k,2\varsigma_k}))$, $i=1,2$. Our particular choice of maximal points implies that for all $j\in\N$ we have $f^j(\mathcal V_2(u_n(\tau_{k,2\varsigma_k})))\cap \{X_0>u_n\}=\emptyset$, for $n$ sufficiently large, and $f(\mathcal V_1(u_n(\tau_{k,2\varsigma_k})))\cap \mathcal V_2(u_n(\tau_{k,2\varsigma_k}))\neq \emptyset$, for all $n\in\N$. 
In particular, the EI is given by $\theta=\lim_{n\to\infty}\frac{|\mathcal V_2(u_n(\tau))|}{|\{X_0>u_n(\tau)\}|}=\frac{10}{11}.$
Now, we observe that
\begin{align*}
&A_{n,k}^{(1)}=A_{n,k}\cap \mathcal V_2(u_n(\tau_{k,2\varsigma_k}))\cup \left(A_{n,k}\cap \mathcal V_1(u_n(\tau_{k,2\varsigma_k}))\right)\setminus f^{-1}\left(A_{n,k}\cap \mathcal V_2(u_n(\tau_{k,2\varsigma_k}))\right)\\
&A_{n,k}\cap \mathcal V_2(u_n(\tau_{k,2\varsigma_k}))=\frac{10}3\left(f(A_{n,k}\cap \mathcal V_1(u_n(\tau_{k,2\varsigma_k})))-3\pi/6\right)+3\pi/6.
\end{align*}
It follows that 
$$\nu(A_k)=\lim_{n\to\infty}n|A_{n,k}^{(1)}|=\frac{10}{11}|A_k|+\frac{1}{11}\left|A_k\setminus \frac{10}{3} A_k\right|.$$
In this situation the limiting process can be written as:
\begin{equation}
\label{eq:repelling-limit-REPP-2d-special2}
\doublehat N=\sum_{i,j=1}^\infty\sum_{\ell=0}^{1}\left(\delta_{(T_{i,j},\, U_{i,j})}\I_{\{Z_{i,j}=0\}}+\delta_{(T_{i,j},\,(3/10)^{\ell }\, U_{i,j})}\I_{\{Z_{i,j}=1\}}\right),
\end{equation}
where  $T_{i,j}$ is given by \eqref{T-matrix}, for $\bar T_{i,j}\stackrel[]{D}{\sim}\text{Exp}(\Theta)$, with $\Theta=23/33$, $U_{i,j}\stackrel[]{D}{\sim} \mathcal U_{(i-1,i]}$, $\p(Z_{i,j}=0)=20/23$, $\p(Z_{i,j}=1)=3/23$ and $(\bar T_{i,j})_{i,j\in\N}$, $(U_{i,j})_{i,j\in\N}$ and $(Z_{i,j})_{i,j\in\N}$ are mutually independent. Let $\beta:=\p(Z_{i,j}=1)$. We are only left to check that $\hat N$ is compatible with $\nu$ given above. 

In order to understand the formula above for the limiting $\doublehat N$ point process, we start by observing the following. Let $u<1$ be close to $1$ and $x\in[0,1]$ be close to $\pi/16$ such that $\varphi(x)=u$, which means that $|x-\pi/16|=(1-u)/100$. Then $|f(x)-3\pi/16|=3(1-u)/100$ and therefore $\varphi(f(x))=1-3/10(1-u)$. Consequently, we obtain $u_n^{-1}(\varphi(x))=2n(1-u)11/100$ and $u_n^{-1}(\varphi(f(x)))=3/10 u_n^{-1}(\varphi(x))$.

Consider the interval $[0,\tau)$. A point of mass of $\doublehat N$ with a vertical coordinate in this range corresponds to an exceedance of $u_n(\tau)$. The case $Z_{i,j}=0$ corresponds to an entrance in $\mathcal V_2(u_n(\tau))$, which means the cluster has size 1 and therefore only one point appears on the same vertical line. The case $Z_{i,j}=1$ corresponds to an entrance in $\mathcal V_1(u_n(\tau))$, which means that a more severe exceedance will occur immediately after, which explains the appearance of another point on the same vertical line, but this time at a distance from the horizontal axis equal to $3/10$ of distance of the first point marked in that vertical line.

In order to show the correct choice for $\Theta$, we start by the second criterion of Kallenberg. Let  $E=J\times G$. The same argument as in \eqref{eq:Kallenberg2} gives us that $\lim_{n\to\infty}\E(N_n(E))=|E|$. Recall that $N$ stands for the 2-dimensional homogeneous Poisson process given by \eqref{Poisson-2d} (such that $\E(N(E))=\leb(E)$).  Then 
\begin{align*}
\E(\tilde N(E))&=(1-\beta)\E\left(\sum_{i,j=1}^\infty\delta_{(\Theta^{-1}T_{i,j}, U_{i,j})}(E)\right)+\beta\E\left(\sum_{i,j=1}^\infty\sum_{\ell=0}^{1}\delta_{(\Theta^{-1}T_{i,j},(3/10)^{\ell }\,U_{i,j})}(E)\right)\\
&=(1-\beta)\E\left(\sum_{i,j=1}^\infty\delta_{(T_{i,j}, U_{i,j})}(\Theta J\times G)\right)+\beta\sum_{\ell=0}^{1}\E\left(\sum_{i,j=1}^\infty\delta_{(T_{i,j},U_{i,j})}(\Theta J\times (10/3)^{\ell }G)\right)\\
&=(1-\beta)\E(N(\Theta J\times G))+\beta\E(N(\Theta J\times G))+\beta\E(N(\Theta J\times 10/3G))\\
&=|J\times G|((1-\beta)\Theta+\beta\Theta+\beta\Theta10/3)=|E|.
\end{align*}
 We now turn to the first criterion.
\begin{align*}
\p\Big(\doublehat N\left(J_k\times A_k\right)=0\Big)&=\e^{-\Theta\frac{10}3\tau_{k,2\varsigma_k}|J_k|}\cdot \e^{\Theta\frac{10}3\tau_{k,2\varsigma_k}|J_k|\left((1-\beta)\frac{\frac{10}3\tau_{k,2\varsigma_k}-|A_k|}{\frac{10}3\tau_{k,2\varsigma_k}}+\beta\frac{\frac{10}3\tau_{k,2\varsigma_k}-|A_k\cup \frac{10}3A_k|}{\frac{10}3\tau_{k,2\varsigma_k}}\right)}\\
&=\exp\left\{-\Theta(1-\beta)|J_k||A_k|-\Theta\beta|J_k||A_k\cup \frac{10}3A_k|\right\}
\end{align*}
Finally, recalling that $\gamma=1/2\theta$, we have
\begin{align*}
\Theta&(1-\beta)|A_k|+\Theta\beta|A_k\cup \frac{10}3A_k|=\Theta(1-\beta)|A_k|+\Theta\beta |A_k\setminus \frac{10}3A_k| +\Theta\beta|\frac{10}3A_k|\\
&=\Theta((1-\beta)+\frac{10}3\beta)|A_k|+\Theta\beta|A_k\setminus \frac{10}3A_k| =\frac{10}{11}|A_k|+\frac1{11}|A_k\setminus \frac{10}3A_k|.
%&=|A_k\setminus\cup_{i=1}^\infty \alpha^{-i\cdot d}A_k|+|\cup_{i=1}^\infty \alpha^{-i\cdot d}A_k|-|\cup_{i=1}^\infty \alpha^{-i\cdot d}A_k|=|A_k\setminus\cup_{i=1}^\infty \alpha^{-i\cdot d}A_k|
\end{align*}

\end{example}

\section{Records, convergence to extremal processes and other consequences}

As in \eqref{eq:SP-def}, let $X_0, X_1, \ldots$ be a stationary stochastic process, where each r.v. $X_i:\X\to\R$ is defined on the measure space $(\X,\mathcal B,\p)$ and $\p$ is $T$-invariant. Define the partial maximum
\[M_n:=\max\{X_0,\ldots,X_{n-1}\}.\]
Similarly to central limit laws for partial sums, we will assume the existence of normalising sequences $(a_n)_{n\in\N}\subset\R^+$ and $(b_n)_{n\in\N}\subset\R$ such that
\begin{equation}\label{eq.Mn-limit}
\p(a_n(M_n-b_n)\leq y)\to G(y),
\end{equation}
for some non-degenerate distribution function (d.f.) $G$, as $n\to\infty$.

Beyond the distributional limit established in \eqref{eq.Mn-limit}, here we consider the continuous time 
process $\{Y_n(t):t\geq 0\}$ defined by
\begin{equation}\label{eq.Y-process}
Y_n(t):=a_n(M_{\lfloor nt\rfloor+1}-b_n)\\
\end{equation}

For each $n\geq 1$, $Y_n(t)$ is a random graph with values in the Skorokhod space $\mathbb{D}((0,\infty))$ of right continuous functions, with existence of limits to the left (cadlag functions). Under suitable hypotheses on the dynamical system $(\X,T,\p)$, the existence of a non-degenerate limit process $Y(t)$ so that $Y_n\Rightarrow Y$ in $\mathbb{D}((0,\infty))$ with respect to the Skorokhod's $J_1$ topology has been proved in \cite{HT17}. The limit process $Y(t)$ will be the so called \emph{extremal process} which we now define.

\subsection{Extremal processes and weak convergence.}
\label{subsec:extremal-processes}

Consider a general probability space $(\Omega,\mathcal{B},\p)$, where $\mathcal{B}$ is the $\sigma$-algebra of sets in the sample space $\Omega$. If $X:\Omega\to\mathbb{R}$ is a random variable, we let $F(u):=\p(X\leq u)$, and define finite dimensional distributions:
\begin{equation}
F_{t_1,\ldots,t_k}\left(u_1,\ldots,u_k\right)=
F^{t_1}\left(\bigwedge_{i=1}^{k}\{u_i\}\right)
F^{t_2-t_1}\left(\bigwedge_{i=2}^{k}\{u_i\}\right)\cdots 
F^{t_k-t_{k-1}}(u_k),
\end{equation}

with $t_1<t_2<\cdots<t_k$, and $\wedge$ denoting the minimum operation. Suppose that $Y_{F}(t)$ is a stochastic process with these finite dimensional distributions, i.e.
\begin{equation}
\p(Y_F(t_1)\leq u_1,\ldots, Y_F(t_k)\leq u_k)=F_{t_1,\ldots,t_k}(u_1,\ldots,u_k).
\end{equation}

By the Kolmogorov extension theorem such a process exists and is called an \emph{extremal-$F$ process}. A version can be taken in $\mathbb{D}((0,\infty))$, i.e. continuous to the right with left hand limits. As we mentioned before, for certain chaotic dynamical systems, the process $Y_n(t)$ in \eqref{eq.Y-process} converges (weakly) to an extremal-$G$ process $Y_G(t)$.

%\subsection{Extreme Value Laws}
%\label{subsec:EVLs}

\begin{definition}
We say that we have an \emph{Extreme Value Law} (EVL) for $M_n$ if there is a non-degenerate d.f. $H:\R_0^+\to[0,1]$ with $H(0)=0$ and, for every $\tau>0$, there exists a sequence of thresholds $u_n=u_n(\tau)$, $n=1,2,\ldots$, satisfying equation \eqref{un} and for which the following holds:
\begin{equation}
\label{eq:EVL-law}
\p(M_n\leq u_n)\to \bar H(\tau),\;\mbox{ as $n\to\infty$.}
\end{equation}
where $\bar H(\tau):=1-H(\tau)$ and the convergence is meant at the continuity points of $H(\tau)$.
\end{definition}

In this context, we now consider the continuous time 
process $\{Z_n(t):t\geq 0\}$ defined by
\begin{equation}
\label{eq.Z-process}
Z_n(t):=u_n^{-1}(M_{\lfloor nt\rfloor+1})
\end{equation}

For each $n\geq 1$, $Z_n(t)$ is a random graph with values in the Skorokhod space $\mathbb{D}((0,\infty))$. Under suitable hypotheses on $(\X,T,\p)$ we prove the existence of a non-degenerate limit process $Z_H(t)$ so that $Z_n(t)\overset{d}{\to}Z_H(t)$ in $\mathbb{D}((0,\infty))$ with respect to the Skorokhod's $M_1$ topology, where $Z_H(t)$ is a stochastic process such that
\begin{equation}
\p(Z_H(t_1)\geq y_1,\ldots Z_H(t_k)\geq y_k)=\bar H^{t_1}\left(\bigvee_{i=1}^k\{y_i\}\right)\bar H^{t_2-t_1}\left(\bigvee_{i=2}^k\{y_i\}\right)\cdots\bar H^{t_k-t_{k-1}}(y_k),
\end{equation}

with $0\leq t_1<t_2<\cdots<t_k$, and $\vee$ denoting the maximum operation. Furthermore, $\bar H(\tau)=\e^{-\theta\tau}$, where $\theta\in(0,1]$ is the parameter called \emph{extremal index} whose existence was previously assumed.

In particular, if there are normalising sequences $(a_n)_{n\in\N}\subset\R^+$ and $(b_n)_{n\in\N}\subset\R$ such that we can write $u_n=y/a_n+b_n$ and
\[n\p(X_0>u_n)=n\p(a_n(X_0-b_n)>y)\to\tau\]
with $\tau=f(y)$ for some homeomorphism $f$, then, as in \eqref{eq.Mn-limit},
\[\p(M_n\leq u_n)=\p(a_n(M_n-b_n)\leq y)\to G(y)\]
where $G=\bar H\circ f$. Hence,
\begin{align*}
u_n(t)&=a_n^{-1}f^{-1}(t)+b_n\\
u_n^{-1}(z)&=f(a_n(z-b_n))\\
Z_n(t)&=f(Y_n(t))\\
Y_n(t)&=f^{-1}(Z_n(t))
\end{align*}

\begin{remark}
In the iid setting, the convergence in \eqref{un} is equivalent to
\[\p(M_n\leq u_n)\to\e^{-\tau},\;\mbox{ as $n\to\infty$,}\]
hence $\theta=1$. Depending on the type of limit law that applies, $f(y)$ is of one of the following three types: $f_1(y)=\e^{-y}$ for $y\in{\mathbb R}$, $f_2(y)=y^{-\alpha}$ for $y>0$, and $f_3(y)=(-y)^{\alpha}$ for $y\leq 0$.
\end{remark}

We will show that for certain chaotic dynamical systems the process $Z_n(t)$ in \eqref{eq.Z-process} converges (weakly) to $Z_H(t)$ in $\mathbb{D}([0,\infty))$ endowed with the Skorokhod's $M_1$ topology. This topology allows a function $g_{1}$ with a jump at $t$ to be approximated arbitrarily well by some continuous $g_{2}$ (with large slope near $t$). For any $g\in\mathbb{D}([0,T])$, let $\Gamma(g):=\{(t,x)\in\lbrack0,T]\times\mathbb{R}:x\in[g(t^{-})\wedge g(t),g(t^{-})\vee g(t)]\}$ denote the completed graph of $g$, and let $\Lambda^{\ast}(g)$ be the set of all its parametrizations, that is, all continuous $G=(\lambda,\gamma):[0,T]\rightarrow\Gamma(g)$ such that $t^{\prime}<t$ implies either $\lambda(t^{\prime})<\lambda(t)$ or $\lambda(t^{\prime})=\lambda(t)$ plus $\left\vert\gamma(t)-g(\lambda(t))\right\vert<\left\vert \gamma(t^{\prime})-g(\lambda(t))\right\vert$. Then,
\[d_{M_1,T}(g_{1},g_{2}):=\inf_{G_{i}=(\lambda_{i},\gamma_{i})\in\Lambda^{\ast}(g_{i})}\left\{ \left\Vert\lambda_{1}-\lambda_{2}\right\Vert\vee\left\Vert \gamma_{1}-\gamma_{2}\right\Vert\right\},\]
where $\|\cdot\|$ denotes the uniform norm, gives a metric inducing $M_1$.

On the space $\mathbb{D}([0,\infty))$ the $M_1$ topology is defined by the metric
\[d_{M_1,\infty}(g_{1},g_{2}):=\int_{0}^{\infty}e^{-t}(1\wedge d_{M_1,t}(g_{1},g_{2}))\,dt.\]
Convergence $g_{n}\rightarrow g$ in $(\mathbb{D}([0,\infty)),M_1)$ means that $d_{M_1,T}(g_{n},g)\rightarrow0$ for every continuity point $T$ of $g$.

Let us consider the projections $h_1,h_2:M_p([0,\infty)^2)\to\mathbb{D}([0,\infty))$ where, given the planar point process $m=\sum_{i=1}^{\infty}\delta_{(t_i,y_i)}$, $h_1 m$ is the real valued function defined by
\[h_1 m(t):=\begin{cases}
\inf\{y_i:t_i\leq t\} &\mbox{if } t>\underline{t}\\
\overline{y} &\mbox{if }  t\leq\underline{t}
\end{cases}\]
with $\underline{t}=\inf\{(t_i)_{i=1}^{\infty}\}$ and $\overline{y}=\sup\{\inf\{y_i:t_i\leq t\}:t>\underline{t}\}$, and $h_2 m$ is the real valued function defined by 
\[h_2 m(y):=\begin{cases}
\inf\{t_i:y_i<y\} &\mbox{if } y>\underline{y}\\
\overline{t} &\mbox{if } y\leq\underline{y}
\end{cases}\]
with $\underline{y}=\inf\{(y_i)_{i=1}^{\infty}\}$ and $\overline{t}=\sup\{\inf\{t_i:y_i<y\}:y>\underline{y}\}$.

\begin{theorem}\label{continuity}
$h_1$ and $h_2$ are a.s.\ continuous in the $M_1$ topology of $\mathbb{D}([0,\infty))$ with respect to point processes in $M_p([0,\infty)^2)$, for processes of Mori-Hsing type.% such that the points $(Y_{i,j,\ell})_{\ell\in\N}$ do not acumulate anywhere).
\end{theorem}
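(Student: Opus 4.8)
The plan is to isolate a measurable set $\mathcal G\subset M_p([0,\infty)^2)$ of \emph{regular} configurations on which both $h_1$ and $h_2$ are continuous in the $M_1$ topology, and then to check that a Mori--Hsing process $\tilde N$ charges its complement with probability zero, so that $\p(\tilde N\in\mathcal G)=1$ delivers the a.s.\ continuity needed for the Continuous Mapping Theorem. A configuration $m=\sum_i\delta_{(t_i,y_i)}$ is placed in $\mathcal G$ when: (i) $m$ is Radon and gives no mass to $\{t=0\}$; (ii) distinct vertical piles have distinct times, i.e.\ atoms sharing a $t$-coordinate constitute a single pile and no two piles occur at the same instant; (iii) all $y$-coordinates are distinct, so the running infimum defining $h_1m$ is attained uniquely and never ties; and (iv) the infima in $h_1m$ and $h_2m$ are attained with no jump accumulating at a finite level. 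On $\mathcal G$ both $h_1m$ and $h_2m$ are genuine cadlag step functions whose jumps sit at the record values, respectively record times, of $m$.

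\textbf{Continuity of $h_1$ (the heart).} I would first prove continuity of $h_1$ at each $m\in\mathcal G$. Fix a continuity point $T$ of $h_1m$ and let $m_n\to m$ vaguely. By the standard consequence of vague convergence of point measures (see \cite[Chapter~3]{R87} or \cite{K86}), on a compact rectangle with $m$-null boundary one may label atoms so that those of $m_n$ converge to those of $m$ with matching multiplicities; in particular the atoms of a pile of $m$ at a time $T_\star\le T$ are approximated by finitely many atoms of $m_n$ at times converging to $T_\star$, in some order and possibly straddling $T_\star$. Since $h_1m_n$ is non-increasing, while it processes the atoms approximating one pile it can only step down through those pile heights lying below the current value, and every such intermediate value lies in the closed segment between the pre-pile level $h_1m(T_\star^-)$ and the pile minimum $h_1m(T_\star)$ --- which is exactly the vertical segment of the completed graph $\Gamma(h_1m)$ over $T_\star$. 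Choosing the parametrization of $\Gamma(h_1m_n)$ that follows this monotone staircase and the parametrization of $\Gamma(h_1m)$ that sweeps the corresponding vertical segment, the spatial coordinates $\gamma_1,\gamma_2$ stay uniformly close while the time coordinates $\lambda_1,\lambda_2$ differ by at most the vanishing spread of the pile times about $T_\star$; summing the finitely many piles on $[0,T]$ and letting $n\to\infty$ gives $d_{M_1,T}(h_1m_n,h_1m)\to0$. As almost every $T$ is a continuity point, $h_1m_n\to h_1m$ in $(\mathbb D([0,\infty)),M_1)$. This is precisely where $M_1$ is indispensable: under an unfavourable ordering of the pile atoms the approximation produces extra intermediate plateaus that $J_1$ cannot match, whereas $M_1$ only registers them as points travelling along the vertical segment.

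\textbf{Continuity of $h_2$ via inversion.} For $h_2$ I would avoid repeating the construction and instead use that, on $\mathcal G$, $h_2m$ is exactly the generalized (right-continuous) inverse of the non-increasing function $h_1m$, since $\inf\{t_i:y_i<y\}=\inf\{t:h_1m(t)<y\}$. The inversion map on monotone cadlag functions is continuous for the $M_1$ topology (a classical fact about $M_1$, due to Whitt), so $h_1m_n\to h_1m$ in $M_1$ forces $h_2m_n=(h_1m_n)^{-1}\to(h_1m)^{-1}=h_2m$ in $M_1$. Hence $h_2$ is also continuous at every $m\in\mathcal G$.

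\textbf{Null set and main obstacle.} It remains to verify that $\tilde N$ of \eqref{Mori-Hsing} lies in $\mathcal G$ almost surely: it is Radon by construction; the times $T_{i,j}$ are built from the continuously distributed $\bar T_{i,j}$, so distinct piles have a.s.\ distinct times and no atom meets $\{t=0\}$; the heights $U_{i,j}Y_{i,j,\ell}$, governed by the continuous factors $U_{i,j}$ independent across $(i,j)$, are a.s.\ all distinct, ruling out ties in the running infimum; and local finiteness yields, on each $[0,T]$, only finitely many record-setting piles together with attainment of the infima. Thus $\p(\tilde N\in\mathcal G)=1$ and the theorem follows. The main obstacle I anticipate is the bookkeeping in the second paragraph: one must simultaneously control the order and two-sided straddling of the (possibly many) pile atoms around each limiting pile time, observe that only finitely many atoms of an upward-infinite pile ever fall below the current running minimum so that the staircase is finite, and splice the local parametrizations over successive piles into one admissible parametrization of $\Gamma(h_1m_n)$ whose time and space distortions both vanish --- routine but delicate, and exactly the place where $M_1$ rather than $J_1$ does the real work.
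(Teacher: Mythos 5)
Your argument for $h_1$ is essentially the paper's: both proofs work at a fixed generic configuration $m$ (locally finite, no atoms on the relevant boundaries, distinct times and heights), use vague convergence to match the finitely many atoms of $m_n$ in a window $[0,T]\times(0,D)$ to those of $m$ within disjoint $\delta$-spheres, and then exploit the fact that the spurious intermediate plateaus created by the straddled ordering of a pile's approximating atoms lie within $\delta$ of the vertical segment of the completed graph $\Gamma(h_1 m)$ over the pile time; the paper formalizes your ``staircase'' parametrization as an explicit homeomorphism $h^{(n)}$ of $[0,T]$ sending the parametrization times of the polygonal vertices of $\Gamma(h_1 m)$ to those of nearby points of $\Gamma(h_1 m_n)$, with linear interpolation in between, yielding $\|\lambda-\lambda^{(n)}\|\vee\|\gamma-\gamma^{(n)}\|<\delta$. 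Where you genuinely diverge is $h_2$: the paper simply asserts the proof is ``similar'', whereas you observe that on the regular set $h_2 m$ is the generalized inverse of the non-increasing step function $h_1 m$ (equivalently, $\Gamma(h_2 m)$ is the reflection of $\Gamma(h_1 m)$ across the diagonal) and invoke $M_1$-continuity of the inversion map. This is a legitimate and arguably cleaner route --- it avoids redoing the combinatorial construction --- but it imports an external lemma (Whitt's inversion-continuity results) whose standard formulations carry hypotheses on behaviour at the endpoints and on unboundedness, so you should check that the paper's boundary conventions for $h_1 m$ and $h_2 m$ (the constant values $\overline{y}$ on $[0,\underline{t}]$ and $\overline{t}$ on $[0,\underline{y}]$) put you in the scope of that lemma, or simply note that the reflection identity makes $d_{M_1,T}(h_2 m_n,h_2 m)$ comparable to $d_{M_1,T'}(h_1 m_n,h_1 m)$ directly. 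Your verification that a Mori--Hsing process a.s.\ lands in the regular set matches the paper's (unproved) assertion that its limit processes satisfy the listed properties, with the same ingredients: continuity of the laws of $T_{i,j}$ and $U_{i,j}$ and local finiteness.
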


\begin{proof}
Consider the projection $h_1$ (the proof is similar for $h_2$). It suffices to show that $h_1$ is continuous in the $M_1$ topology of $\mathbb{D}([0,T])$ at $m\in M_p([0,\infty)^2)$, where $m$ satisfies the following:
\[m(\{0\}\times[0,\infty))=m(\{T\}\times[0,\infty))=m([0,\infty)\times\{0\})=0,\qquad m([0,t]\times(0,x))<\infty\]

for any $0<s<t<T$, $x\in\R$. Note that these properties are satisfied by the type of processes we are considering here. Let $m_n\in M_p([0,\infty)^2)$ and suppose $m_n\overset{v}{\to}m$. We need to show that $d_{M_1,T}(h_1 m_n,h_1 m)\to 0$. Suppose for concreteness that $h_1 m(0)>h_1 m(T)$ and choose $D>h_1 m(0)$ such that $m([0,T]\times\{D\})=0$. For large enough $n$,
\[m_n([0,T]\times(0,D))=m([0,T]\times(0,D))=p,\]
with $1\leq p<\infty$, and there is an enumeration of the points of $m_n$, call it $(t_i^{(n)},y_i^{(n)})_{1\leq i\leq p}$ such that $\lim_{n\to\infty}(t_i^{(n)},y_i^{(n)})=(t_i,y_i)$, $1\leq i\leq p$, where $(t_i,y_i)_{1\leq i\leq p}$ is the analogous enumeration of points of $m$ in $[0,T]\times(0,D)$. Pick $\delta<\frac{1}{2}\min\{|t_i-t_{i'}|:t_i\neq t_{i'}\}\wedge\frac{1}{2}\min\{|y_i-y_{i'}|:y_i\neq y_{i'}\}$ small enough that $\delta$-spheres about the distinct points of the set $\{(t_i,y_i)_{1\leq i\leq p}\}$ are disjoint and in $[0,T]\times(0,D)$. Pick n so large that each sphere contains the same number of points of $m_n$ as of $m$.
Let $(t_{i_k},y_{i_k})_{1\leq k\leq q}$, $q\leq p$, be the points $(t_i,y_i)$ such that $y_i<y_{i'}$ for any $(t_{i'},y_{i'})\neq(t_i,y_i)$ with $t_{i'}\leq t_i$. Additionally, suppose that $0<t_{i_1}<t_{i_2}<\ldots<t_{i_q}<T$ (note that $t_{i_k}\neq t_{i_{k'}}$ for $k\neq k'$) so that the complete graph of $h_1 m$ is just a polygonal line with distinct vertices $(0,y_{i_1}),(t_{i_1},y_{i_1}),(t_{i_2},y_{i_1}),(t_{i_2},y_{i_2}),\ldots,(t_{i_q},y_{i_{q-1}}),(t_{i_q},y_{i_q}),(T,y_{i_q})$ (if $
t_{i_1}=0$ or $t_{i_q}=T$ we simply drop the respective point $(t_{i_1},y_{i_1})$ or $(t_{i_q},y_{i_q})$)

Let $G=(\lambda,\gamma):[0,T]\rightarrow\Gamma(h_1 m)$ be a parametrization of $\Gamma(h_1 m)$ and $G^{(n)}:[0,T]\rightarrow\Gamma(h_1 m_n)$ a parametrization of $\Gamma(h_1 m_n)$. Let $u_k=G^{-1}(t_{i_k},y_{i_k})$ for $1\leq k\leq q$ and $v_k=G^{-1}(t_{i_{k+1}},y_{i_k})$ for $1\leq k<q$. Note that $0<u_1<v_1<u_2<\ldots<v_{q-1}<u_q<T$. Now, we define $h^{(n)}$ as a homeomorphism of $[0,T]$ onto $[0,T]$ by

\begin{itemize}
\item $h^{(n)}(0)=0, \quad h^{(n)}(T)=T$
\item $h^{(n)}(u_k)=(G^{(n)})^{-1}(t_k^{(n)},y_k^{(n)})$ where $(t_k^{(n)},y_k^{(n)})\in\Gamma(h_1 m_n)$ belongs to the sphere of radius $\delta$ about $(t_{i_k},y_{i_k})$ for $1\leq k\leq q$
\item $h^{(n)}(v_k)=(G^{(n)})^{-1}(t_{k'}^{(n)},y_{k'}^{(n)})$ where $(t_{k'}^{(n)},y_{k'}^{(n)})$ belongs to the sphere of radius $\delta$ about $(t_{i_{k+1}},y_{i_k})$ for $1\leq k<q$ (note that $(t_{k+1}^{(n)},y_k^{(n)})$ does not necessarily belong to $\Gamma(h_1 m_n)$)
\item $h^{(n)}$ is linearly interpolated elsewhere on $[0,T]$
\end{itemize}

Then, $G^{(n)}\circ h^{(n)}=(\lambda^{(n)},\gamma^{(n)}):[0,T]\rightarrow\Gamma(h_1 m_n)$ is another parametrization of $\Gamma(h_1 m_n)$, with $\left\Vert\lambda-\lambda^{(n)}\right\Vert<\delta$ and $\left\Vert\gamma-\gamma^{(n)}\right\Vert<\delta$, showing $h_1 m$ and $h_1 m_n$ are at a distance less than $\delta$ on the space $\mathbb{D}([0,T])$ with the metric $d_{M_1,T}$.
\end{proof}

\begin{proposition}\label{Z-process}
If the process $N_n$ converges weakly to $\tilde N$, then the process $Z_n$ converges weakly to $Z_H$ in $\mathbb{D}([0,\infty))$ endowed with the Skorokhod's $M_1$ topology.
\end{proposition}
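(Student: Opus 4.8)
The plan is to realise $Z_n$ as the image of the planar process $N_n$ under the projection $h_1$ from Theorem~\ref{continuity}, apply the Continuous Mapping Theorem, and then identify the resulting limit with $Z_H$ by matching finite dimensional distributions.

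First I would establish the identity $Z_n=h_1(N_n)$. Since the time marks of $N_n=\sum_{j\ge 0}\delta_{(j/n,\,u_n^{-1}(X_j))}$ start at $0$, for each $t>0$ the definition of $h_1$ gives $h_1 N_n(t)=\inf\{u_n^{-1}(X_j):\ j\le\lfloor nt\rfloor\}$. Because $u_n$ is strictly decreasing, so is $u_n^{-1}$, and therefore $\inf_{j\le\lfloor nt\rfloor}u_n^{-1}(X_j)=u_n^{-1}\bigl(\max_{j\le\lfloor nt\rfloor}X_j\bigr)=u_n^{-1}(M_{\lfloor nt\rfloor+1})=Z_n(t)$, using $M_{\lfloor nt\rfloor+1}=\max\{X_0,\dots,X_{\lfloor nt\rfloor}\}$. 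Hence $Z_n=h_1(N_n)$ as random elements of $\mathbb D([0,\infty))$. By hypothesis $N_n\Rightarrow\tilde N$ and, by Remark~\ref{rem:Mori-Hsing}, $\tilde N$ is of Mori--Hsing type, so Theorem~\ref{continuity} grants the a.s.\ continuity of $h_1$ at $\tilde N$, provided $\tilde N$ a.s.\ lies in the set of measures charging neither $\{0\}\times[0,\infty)$, a fixed vertical line $\{T\}\times[0,\infty)$, nor the axis $[0,\infty)\times\{0\}$, and with $\tilde N([0,t]\times(0,x))<\infty$. These follow from the explicit form \eqref{eq:repelling-limit-REPP-2d} of $\tilde N$: the marks $T_{i,j}$ and base heights $U_{i,j}$ are a.s.\ positive and a.s.\ avoid any fixed time, while the vertical piles $\alpha^{\ell d}U_{i,j}$ escape to $+\infty$ as $\ell\to\infty$, so each bounded box $[0,t]\times(0,x)$ contains only finitely many points. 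The Continuous Mapping Theorem then yields $Z_n=h_1(N_n)\Rightarrow h_1(\tilde N)$ in the $M_1$ topology.

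It remains to identify $h_1(\tilde N)$ with $Z_H$. Fix $0\le t_1<\dots<t_k$ and heights $y_1,\dots,y_k$. Since $h_1\tilde N(t_i)\ge y_i$ says exactly that no point $(t,y)$ with $t\le t_i$ has $y<y_i$, the event $\{h_1\tilde N(t_i)\ge y_i,\ i=1,\dots,k\}$ equals the event that $\tilde N$ has no point in $R=\bigcup_{i=1}^k[0,t_i]\times[0,y_i)$. Slicing $R$ along the disjoint time bands $(t_{j-1},t_j)$ (with $t_0=0$) shows that on the $j$-th band the forbidden vertical range is $[0,\bigvee_{i=j}^k y_i)$; modulo the $\tilde N$-null boundary lines $\{t=t_j\}$, the event that $\tilde N$ avoids $R$ thus coincides with the event that $\tilde N$ avoids the disjoint rectangles $(t_{j-1},t_j)\times[0,\bigvee_{i=j}^k y_i)$, $j=1,\dots,k$, to which the void-probability formula of Theorem~\ref{thm:indep-increments} applies. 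Using $\nu([0,y))=\theta y$ (the general relation between the outer measure and the extremal index, as in the Remark following \eqref{eq:outer-measure-ex-1}), I obtain
\[
\p\bigl(h_1\tilde N(t_i)\ge y_i,\ i=1,\dots,k\bigr)=\prod_{j=1}^k\e^{-\theta(t_j-t_{j-1})\bigvee_{i=j}^k y_i}=\prod_{j=1}^k\bar H^{\,t_j-t_{j-1}}\Bigl(\bigvee_{i=j}^k y_i\Bigr),
\]
with $\bar H(\tau)=\e^{-\theta\tau}$, which are precisely the finite dimensional distributions of $Z_H$. As these determine the law of the extremal process, $h_1(\tilde N)\overset{d}{=}Z_H$, and hence $Z_n\Rightarrow Z_H$. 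I expect the main obstacle to be this last void-probability computation: the rectangles $[0,t_i]\times[0,y_i)$ are nested in time rather than disjoint, so one must first reorganise $R$ into a disjoint union over time bands, check that the heights $\bigvee_{i=j}^k y_i$ feed correctly into $\nu$, and confirm that the product structure of Theorem~\ref{thm:indep-increments}, valid for disjoint time intervals, indeed delivers the stated product of exponentials.
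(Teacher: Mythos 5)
Your proposal is correct and follows essentially the same route as the paper: realise $Z_n=h_1(N_n)$, invoke the a.s.\ continuity of $h_1$ at Mori--Hsing type processes (Theorem~\ref{continuity}) together with the Continuous Mapping Theorem, and identify $h_1(\tilde N)$ with $Z_H$ by matching finite dimensional distributions via void probabilities over disjoint time bands with heights $\bigvee_{i=j}^k y_i$. Your treatment is, if anything, slightly more explicit than the paper's about verifying the regularity hypotheses on $\tilde N$ and about the reorganisation of the nested rectangles into disjoint bands, but the argument is the same.
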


\begin{proof}
Since $\tilde N$ is a processes of Mori-Hsing type, 
we can use Theorem \ref{continuity} to conclude that $h_1$ is a.s.\ continuous with respect to $\tilde N$ in the $M_1$ topology of $\mathbb{D}([0,\infty))$. Hence, by the CMT we just need to show that $h_1(N_n)=Z_n$ for every $n\in\N$ and $h_1(\tilde N)=Z_H$.

We have, for $t\geq 0$ and $n\in\N$,
\[h_1 N_n(t)=\inf\{u_n^{-1}(X_i):i/n\leq t\}=u_n^{-1}(\sup\{X_i:i\leq n t\})=Z_n(t)\]

Now, let us see that the finite dimensional distributions of $h_1(\tilde N)$ are the same of $Z$. For the unidimensional distribution, with $t,y\geq 0$ and $\bar H(\tau)=\e^{-\theta\tau}$, we have
\[\p(h_1\tilde N(t)\geq y)=\p(\tilde N([0,t]\times[0,y))=0)=\e^{-\theta t y}=\bar H^t(y).\]

For the bidimensional distribution, with $0\leq t_1<t_2$ and $y_1\geq y_2\geq 0$,
\begin{align*}
\p(h_1\tilde N(t_1)\geq y_1,h_1\tilde N(t_2)\geq y_2)&=\p(\tilde N([0,t_1]\times[0,y_1))=0,\tilde N((t_1,t_2]\times[0,y_2))=0)\\
&=\bar H^{t_1}(y_1)\bar H^{t_2-t_1}(y_2).
\end{align*}
In case $0\leq y_1<y_2$,
\[\p(h_1\tilde N(t_1)\geq y_1,h_1\tilde N(t_2)\geq y_2)=\p(h_1\tilde N(t_2)\geq y_2)=\bar H^{t_2}(y_2),\]
so in general
\[\p(h_1\tilde N(t_1)\geq y_1,h_1\tilde N(t_2)\geq y_2)=\bar H^{t_1}(y_1\vee y_2)\bar H^{t_2-t_1}(y_2).\]

Following this pattern we get for the $k$-dimensional distribution
\begin{align*}
\p(h_1\tilde N(t_1)\geq y_1,\ldots,h_1\tilde N(t_k)\geq y_k)&=\bar H^{t_1}\left(\bigvee_{i=1}^k\{y_i\}\right)\bar H^{t_2-t_1}\left(\bigvee_{i=2}^k\{y_i\}\right)\cdots\bar H^{t_k-t_{k-1}}(y_k)\\
&=\p(Z_H(t_1)\geq y_1,\ldots,Z_H(t_k)\geq y_k).
\end{align*}
Hence $h_1(\tilde N)=Z_H$.
\end{proof}

\begin{remark}
If, as before, we assume the existence of normalising sequences $(a_n)_{n\in\N}\subset\R^+$ and $(b_n)_{n\in\N}\subset\R$ such that $n\p(a_n(X_0-b_n)>y)\to f(y)$ and $\p(a_n(M_n-b_n)\leq y)\to G(y)$, with $G=\bar H\circ f$ for some d.f. $G$ and $H$, then $f^{-1}(Z_n(t))=Y_n(t)$ and $f^{-1}(Z_H(t))=Y_G(t)$. Hence, we also have $Y_n\Rightarrow Y_G$ in $\mathbb{D}([0,\infty))$ with respect to the Skorokhod's $M_1$ topology.
\end{remark}

Given the stochastic process $Z_H(t)$ its path inverse is defined by:
\[Z_H^{\leftarrow}(y)=\inf\{t:Z_H(t)<y\},\]
where the domain of $Z_H^{\leftarrow}$ is codomain of $Z$, and $Z_n^{\leftarrow}$ is defined similarly. We have the following result.

\begin{proposition}\label{Z-inv.process}
If the process $N_n$ converges weakly to $\tilde N$, then the process $Z_n^{\leftarrow}$ converges weakly to $Z_H^{\leftarrow}$ in $\mathbb{D}([0,\infty))$ endowed with the Skorokhod's $M_1$ topology.
\end{proposition}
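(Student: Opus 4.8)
The plan is to reproduce the argument of Proposition~\ref{Z-process}, but with the projection $h_1$ replaced by the projection $h_2$, and then to invoke the Continuous Mapping Theorem (CMT). The two structural identities I need to establish are $h_2(N_n)=Z_n^{\leftarrow}$ and $h_2(\tilde N)=Z_H^{\leftarrow}$; once these are in place, the conclusion follows immediately from the continuity of $h_2$ already provided by Theorem~\ref{continuity}.

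First I would verify that $h_2(N_n)=Z_n^{\leftarrow}$. Since $u_n$ is strictly decreasing, so is $u_n^{-1}$, and hence $u_n^{-1}(X_j)<y$ is equivalent to $X_j>u_n(y)$. Writing the points of $N_n$ as $(j/n,u_n^{-1}(X_j))$, this gives $h_2 N_n(y)=\inf\{j/n:\,u_n^{-1}(X_j)<y\}=\inf\{j/n:\,X_j>u_n(y)\}=j^*/n$, where $j^*=\min\{j:\,X_j>u_n(y)\}$. On the other hand $Z_n(t)<y$ means $u_n^{-1}(M_{\lfloor nt\rfloor+1})<y$, i.e. $M_{\lfloor nt\rfloor+1}>u_n(y)$, i.e. some $X_j>u_n(y)$ with $j\leq\lfloor nt\rfloor$; the first $t$ for which this occurs is again $j^*/n$. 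Thus $Z_n^{\leftarrow}=h_2(N_n)$.

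Next I would identify $h_2(\tilde N)$ with $Z_H^{\leftarrow}$. The cleanest route is the elementary identity $h_2 m=(h_1 m)^{\leftarrow}$, valid for every admissible point measure $m$ (those satisfying the conditions used in the proof of Theorem~\ref{continuity}): indeed $h_1 m(t)<y$ holds iff there is a point $(t_i,y_i)$ with $t_i\leq t$ and $y_i<y$, so $\inf\{t:\,h_1 m(t)<y\}=\inf\{t_i:\,y_i<y\}=h_2 m(y)$. Combining this with $h_1(\tilde N)=Z_H$, already proved in Proposition~\ref{Z-process}, yields $h_2(\tilde N)=(h_1\tilde N)^{\leftarrow}=Z_H^{\leftarrow}$. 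Alternatively, one can match finite-dimensional distributions directly via the void probabilities of $\tilde N$: for instance $\p(h_2\tilde N(y)>t)=\p(\tilde N([0,t]\times[0,y))=0)=\e^{-\theta t y}=\p(Z_H^{\leftarrow}(y)>t)$, and the $k$-dimensional case follows from the void probabilities over the relevant rectangles exactly as in Proposition~\ref{Z-process}.

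Finally, since $\tilde N$ is of Mori-Hsing type, Theorem~\ref{continuity} guarantees that $h_2$ is almost surely continuous with respect to $\tilde N$ in the $M_1$ topology of $\mathbb{D}([0,\infty))$. The hypothesis $N_n\Rightarrow\tilde N$ together with the CMT then gives $h_2(N_n)\Rightarrow h_2(\tilde N)$, that is $Z_n^{\leftarrow}\Rightarrow Z_H^{\leftarrow}$, as claimed. I expect the only delicate point to be the bookkeeping with strict versus non-strict inequalities and the behaviour at the boundary of the domain, where $y$ drops below $\underline y$, since the conventions defining $Z_H^{\leftarrow}$ and $h_2$ must be reconciled on the common domain; the genuine analytic content has already been absorbed into Theorem~\ref{continuity}, so no obstacle of substance remains.
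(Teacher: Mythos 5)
Your proposal is correct and follows essentially the same route as the paper: establish the two structural identities $h_2(N_n)=Z_n^{\leftarrow}$ and $h_2(\tilde N)=Z_H^{\leftarrow}$, then invoke the a.s.\ continuity of $h_2$ from Theorem~\ref{continuity} and the CMT. Your shortcut $h_2 m=(h_1 m)^{\leftarrow}$ combined with $h_1(\tilde N)=Z_H$ is a clean alternative to the paper's direct computation of the finite-dimensional distributions via void probabilities, but since you also supply that computation as a fallback, the argument coincides with the paper's in substance.
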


\begin{proof}
Since $h_2$ is again a.s.\ continuous with respect to $\tilde N$ in the $M_1$ topology of $\mathbb{D}([0,\infty))$, by using the CMT we just need to show that $h_2(N_n)=Z_n^{\leftarrow}$ for every $n\in\N$ and $h_2(\tilde N)=Z_H^{\leftarrow}$. We have, for $y\geq 0$ and $n\in\N$,
\[h_2 N_n(y)=\inf\{i/n:u_n^{-1}(X_i)<y\}=\inf\{i/n:u_n^{-1}(M_{i+1})<y\}=\inf\{t:Z_n(t)<y\}=Z_n^{\leftarrow}(y).\]

Now, let us see that the finite dimensional distributions of $h_2(\tilde N)$ are the same of $Z_H^{\leftarrow}$. For the unidimensional distribution, with $t,y\geq 0$ and $\bar H(\tau)=\e^{-\theta\tau}$, we have
\[\p(h_2\tilde N(y)\geq t)=\p(\tilde N([0,t)\times[0,y))=0)=\e^{-\theta t y}=\bar H^t(y).\]

For the bidimensional distribution, with $0\leq t_1<t_2$ and $y_1\geq y_2\geq 0$,
\[\p(h_2\tilde N(y_1)\geq t_1,h_2\tilde N(y_2)\geq t_2)=\p(\tilde N([0,t_1)\times[0,y_1))=0,\tilde N([t_1,t_2)\times[0,y_2))=0)\]
\[=\bar H^{t_1}(y_1)\bar H^{t_2-t_1}(y_2).\]
In case $0\leq y_1<y_2$,
\[\p(h_2\tilde N(y_1)\geq t_1,h_2\tilde N(y_2)\geq t_2)=\p(h_2\tilde N(y_2)\geq t_2)=\bar H^{t_2}(y_2),\]
so in general
\[\p(h_2\tilde N(y_1)\geq t_1,h_2\tilde N(y_2)\geq t_2)=\bar H^{t_1}(y_1\vee y_2)\bar H^{t_2-t_1}(y_2).\]

Following this pattern we get for the $k$-dimensional distribution
\begin{align*}
\p(h_2\tilde N(y_1)\geq t_1,\ldots,h_2\tilde N(y_k)\geq t_k)&=\bar H^{t_1}\left(\bigvee_{i=1}^k\{y_i\}\right)\bar H^{t_2-t_1}\left(\bigvee_{i=2}^k\{y_i\}\right)\cdots\bar H^{t_k-t_{k-1}}(y_k)\\
&=\p(Z_H(t_1)\geq y_1,\ldots,Z_H(t_k)\geq y_k)\\
&=\p(Z_H^{\leftarrow}(y_1)\geq t_1,\ldots,Z_H^{\leftarrow}(y_k)\geq t_k).
\end{align*}
Hence $h_2(\tilde N)=Z_H^{\leftarrow}$.

\end{proof}

\begin{remark}
We also have similar results for the multi-dimensional REPP process $N_n^*$. Let $b(r):=|B_r(\zeta)|$ and define the projections $h_1^*,h_2^*:M_p([0,\infty)\times T_{\zeta}(\X))\to\mathbb{D}([0,\infty))$ where, given the planar point process $m=\sum_{i=1}^{\infty}\delta_{(t_i,y_i)}$, $h_1^*m$ is the real valued function defined by
\[h_1^*m(t):=\begin{cases}
\inf\{b(\dist(y_i,\zeta)):t_i\leq t\} &\mbox{if } t>\underline{t}\\
\overline{y}^* &\mbox{if } t\leq\underline{t}
\end{cases},\]
with $\underline{t}=\inf\{(t_i)_{i=1}^{\infty}\}$ and $\overline{y}^*=\sup\{\inf\{b(\dist(y_i,\zeta)):t_i\leq t\}:t>\underline{t}\}$, and $h_2^*m$ is the real valued function defined by 
\[h_2^*m(y):=\begin{cases}
\inf\{t_i:b(\dist(y_i,\zeta))<y\} &\mbox{if } y>\underline{y}\\
\overline{t}^* &\mbox{if } y\leq\underline{y}
\end{cases},\]
with $\underline{y}=\inf\{(y_i)_{i=1}^{\infty}\}$ and $\overline{t}^*=\sup\{\inf\{t_i:b(\dist(y_i,\zeta))<y\}:y>\underline{y}\}$.

Hence, $h_1^*m$ and $h_2^*m$ are a.s.\ continuous in the $M_1$ topology of $\mathbb{D}([0,\infty))$ with respect to $N^*$ (note that $h_i^*=h_i\circ h, i=1,2$, where $h:M_p([0,\infty)\times T_{\zeta}(\X))\to M_p([0,\infty)^2)$ given by $h\left(\sum_{i=1}^{\infty}\delta_{(t_i,y_i)}\right)=\sum_{i=1}^{\infty}\delta_{(t_i,b(dist(y_i,\zeta)))}$ is a continuous function), so if $N_n^*\Rightarrow N^*$ we conclude by using the CMT that $Z_n^*\Rightarrow Z_H$ and $(Z_n^*)^{\leftarrow}\Rightarrow Z_H^{\leftarrow}$, where
\[Z_n^*(t):=h_1^*N_n^*(t)=\inf\left\{b\left(\dist\left(\frac{\Phi_\zeta^{-1}(T^j(x))}{b(g^{-1}(u_n(1)))^{1/d}},\zeta\right)\right):j\leq nt\right\}\]
\[(Z_n^*)^{\leftarrow}(y):=h_2^*N_n^*(y)=\inf\left\{j/n:b\left(\dist\left(\frac{\Phi_\zeta^{-1}(T^j(x))}{b(g^{-1}(u_n(1)))^{1/d}},\zeta\right)\right)<y\right\}=\inf\{t:Z_n^*(t)<y\}\]

We just need to see that $h_1^*(N^*)=Z_H$ and $h_2^*(N^*)=Z_H^{\leftarrow}$. For example, we have for the unidimensional distributions, with $t,y\geq 0$ and $\bar H(\tau)=\e^{-\theta\tau}$,
\begin{align*}
\p(h_1^*N^*(t)\geq y)&=\p(N^*([0,t]\times B_{b^{-1}(y)}(\zeta))=0)=\e^{-\theta t|B_{b^{-1}(y)}(\zeta)|}=\e^{-\theta t y}=\p(Z_H(t)\geq y),\\
\p(h_2^*N^*(y)\geq t)&=\p(N^*([0,t)\times B_{b^{-1}(y)}(\zeta))=0)=\p(Z_H(t)\geq y)=\p(Z_H^{\leftarrow}(y)\geq t).
\end{align*}
and the rest follows similarly the proofs of the previous propositions.
\end{remark}

\subsection{The record time and record value point processes}
Given the processes $Z_H(t)$ and $Z_H^{\leftarrow}(t)$, we next describe the distribution of their jump values, i.e. the locations of their discontinuities. This has natural application to the theory of record times and record values which we describe as follows. Consider the original sequences $(X_n)_{n\in\N_0}$, $(M_n)_{n\in\N}$, and let $t_1=0$. Define a strictly increasing sequence $(t_k)_{k\in\N}$ via:
\begin{equation}\label{eq.time-to-record}
t_k:=\inf\{j>t_{k-1}:X_j>M_j\}.
\end{equation}
Then this sequence $(t_k)_k$ forms the \emph{record times} associated to $M_n$, namely the times where $M_n$ jumps. The corresponding record values are given by the $X_{t_k}=M_{t_k+1}$. For the process $Z_n(t)=u_n^{-1}(M_{\lfloor nt\rfloor+1})$, we see that the jumps of $Z_n(t)$ occur precisely at the times $t_{k,n}:=t_k/n$ where $t_k$ is a record time. The jump values $Z_n(t_{k,n})$ are then the normalised record values $u_n^{-1}(X_{t_k})$. We consider the following two point processes defined on subsets of $\mathbb{R}$:
\begin{equation}
\mathcal{R}_n:=\sum_{j=1}^{\infty}\delta_{\frac{j}{n}}\cdot 1_{\{X_j>M_j\}},\quad
\mathcal{W}_n:=\sum_{k=1}^{\infty}\delta_{Z_n(t_{k,n})},
\end{equation}
the former is the \emph{record time process} and the latter is the \emph{record value process}. The weak convergence of these processes can be obtained from the weak convergence of the extremal processes, in the Skorokhod's $J_1$ topology, as in \cite{R87}, for example. 

To give an overview the construction of Skorokhod's $J_1$-topology consider first a metric inducing $J_1$ topology on the space $\mathbb{D}([a,b])$ given by:
\[d_{a,b}(\varphi_1,\varphi_2):=\inf_{h\in\Lambda}\left\{\|\varphi_1\circ h-\varphi_2\|\vee\|h-\mathrm{id}\|\right\},\]
where $\mathrm{id}$ is the identity mapping and the set $\Lambda$ is the collection of strictly increasing, continuous functions $h:[a,b]\to[a,b]$ such that $h(a)=a$ and $h(b)=b$. The construction carries over to $\mathbb{D}((0,\infty))$ by use of the following metric: let $r_{a,b}\varphi(x)$ denote the restriction of $\varphi(x)$ to the interval $[a,b]$, and define 
\[d_{0,\infty}(\varphi_1,\varphi_2):=\int_{0}^{1}\int_{1}^{\infty}e^{-t}(1\wedge d_{s,t}(r_{s,t}\varphi_1,r_{s,t}\varphi_2))\,dt\,ds.\]
Then convergence $\varphi_n\to\varphi$ in $\mathbb{D}((0,\infty))$ holds in the $J_1$ metric if $d_{0,\infty}(\varphi_n,\varphi)\to 0$ at each continuity point of $\varphi$.

We now state distributional results for the records' point processes using the weak convergence of extremal processes in the $J_1$ topology. We will apply this result to prove the convergence of record time and record value point processes in the particular case of clustering being created by a repelling periodic point as described in Section~\ref{subsubsec:2d-DS}. See Remark~\ref{rem:convergence-records-repelling}. 

\begin{theorem}
\label{thm.records}
Suppose that the processes $Z_n$ and $Z_n^{\leftarrow}$ converge weakly to $Z_H$ and $Z_H^{\leftarrow}$, respectively, in $\mathbb{D}((0,\infty))$ endowed with the Skorokhod's $J_1$ topology. Then, the point processes $\mathcal{R}_n$ and $\mathcal{W}_n$ converge weakly to a PRM with intensity $\gamma(t)=1/t$ on state space $(0,\infty)$, i.e. for any $0<a<b<\infty$,
\[\lim_{n\to\infty}\p(\mathcal{R}_n(a,b)=k)=\lim_{n\to\infty}\p(\mathcal{W}_n(a,b)=k)=\frac{a}{b}\cdot \frac{(\log(b/a))^k}{k!}.\]
\end{theorem}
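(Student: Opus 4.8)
The plan is to exhibit $\mathcal{R}_n$ and $\mathcal{W}_n$ as images of the extremal processes $Z_n$ and $Z_n^{\leftarrow}$ under the map $\Psi$ that sends a monotone cadlag function to the point measure carried by its set of discontinuities, and then to push the assumed $J_1$-convergence through $\Psi$ by the continuous mapping theorem. The representation is immediate from the definitions: since $M_{j+1}=M_j\vee X_j$, the non-increasing, right-continuous path $Z_n(t)=u_n^{-1}(M_{\lfloor nt\rfloor+1})$ has a downward jump at $t=j/n$ exactly when $X_j>M_j$, that is, precisely at the normalised record times, so $\mathcal{R}_n=\Psi(Z_n)$; dually $Z_n^{\leftarrow}(y)=\inf\{t:Z_n(t)<y\}$ jumps in the variable $y$ exactly at the successive record values $Z_n(t_{k,n})$, so $\mathcal{W}_n=\Psi(Z_n^{\leftarrow})$.

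I would next pin down the candidate limits through the planar Poisson structure underlying $Z_H=h_1(\tilde N)$. In the regimes for which the $J_1$-limit exists (in particular the repelling periodic case of Section~\ref{subsubsec:2d-DS}, where the convergence $N_n\Rightarrow\tilde N$ is available), the clusters of $\tilde N$ pile \emph{upward}, so the running infimum $h_1$ only ever sees the lowest point of each vertical pile; these base points form a planar Poisson random measure $\Pi$ on $(0,\infty)^2$, and the identity $\p(Z_H(t)\ge y)=\p(\Pi((0,t]\times(0,y))=0)=\e^{-\theta ty}=\bar H^t(y)$ forces its intensity to be $\theta\,\dif t\,\dif y$. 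A point $(t,y)$ of $\Pi$ is a record of the running infimum iff $\Pi((0,t)\times(0,y))=0$, an event of probability $\e^{-\theta ty}$; hence the record points of $\Pi$ themselves form a Poisson random measure with intensity $\theta\e^{-\theta ty}\,\dif t\,\dif y$. Integrating out one coordinate, $\int_0^\infty\theta\e^{-\theta ty}\,\dif y=1/t$ and $\int_0^\infty\theta\e^{-\theta ty}\,\dif t=1/y$, so the time-marginal $\Psi(Z_H)$ is a PRM of intensity $1/t$ and the value-marginal $\Psi(Z_H^{\leftarrow})$ a PRM of intensity $1/y$—the parameter $\theta$ cancelling in both. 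For either, the mean mass of $(a,b)$ is $\int_a^b s^{-1}\,\dif s=\log(b/a)$, whence $\p(\,\cdot\,(a,b)=k)=\e^{-\log(b/a)}(\log(b/a))^k/k!=\tfrac ab\,(\log(b/a))^k/k!$, which is exactly the asserted formula.

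The convergence is then delivered by the continuous mapping theorem provided $\Psi$ is almost surely continuous, in the $J_1$ topology, at $Z_H$ and at $Z_H^{\leftarrow}$, and this continuity is the main obstacle. Jump-location extraction is not $J_1$-continuous on all of $\mathbb{D}$: a $J_1$-convergent sequence may carry a spurious jump of vanishing size that still contributes a stray atom in the limit, and this cannot be excluded from the path of $Z_n$ alone. I would therefore perform the extraction at the planar level. On a full-measure set $Z_H(a)<\infty$ and $\tilde N$ has no two points sharing a coordinate; on such realisations every record with time in $(a,b)$ has value at most $Z_H(a)$, so the record points in $(a,b)$ are determined by the restriction of $\tilde N$ to the \emph{bounded} box $(0,b)\times(0,Z_H(a)]$. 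On bounded boxes the record-extraction map is continuous for the vague topology precisely because there are no ties, and no vanishing-size pathology can arise on a compact region; since $\Psi(Z_n)$ and $\Psi(Z_n^{\leftarrow})$ are the two coordinate projections of this planar record extraction applied to $N_n$ (through $h_1$ and $h_2$), the continuous mapping theorem yields $\mathcal{R}_n=\Psi(Z_n)\Rightarrow\Psi(Z_H)$ and $\mathcal{W}_n=\Psi(Z_n^{\leftarrow})\Rightarrow\Psi(Z_H^{\leftarrow})$. Combined with the intensity computation of the previous paragraph this gives the stated limits. The crux, and the step I expect to demand the most care, is thus the reduction to a compact region of the plane: it is what converts the merely $J_1$-level (hence potentially discontinuous) jump count into a genuinely continuous functional and rules out the vanishing-jump pathology.
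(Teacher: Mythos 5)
Your overall strategy --- realise $\mathcal{R}_n$ and $\mathcal{W}_n$ as the point measures of jump locations of $Z_n$ and $Z_n^{\leftarrow}$ and push the assumed convergence through the continuous mapping theorem --- is the same as the paper's, which uses the counting functional $h_3$ on the subspace $\widetilde{\mathbb{D}}((0,\infty))$ of functions constant between isolated jumps and cites \cite{R87} both for the a.s.\ $J_1$-continuity of $h_3$ there and for the identification of the limit as a PRM with intensity $1/t$. The genuine gap is in your identification of the limit law: the set of record (Pareto-minimal) points of the planar Poisson process $\Pi$ is \emph{not} a Poisson random measure with intensity $\theta\e^{-\theta ty}\,\dif t\,\dif y$. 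The records are totally ordered --- if $(t_1,y_1)$ and $(t_2,y_2)$ are both records and $t_1<t_2$, then necessarily $y_2<y_1$ --- so two disjoint boxes, one of which dominates the other coordinatewise, can never both contain records; this is incompatible with independence of the counts on disjoint sets, hence with Poissonianity. (The retention event ``$(t,y)$ is a record'' depends on the other points of $\Pi$, so the thinning theorem does not apply.) Your integration therefore only establishes the first moment $\E(\mathcal{R}(a,b))=\log(b/a)$; the Poisson distribution of the counts, which is the content of the theorem, does not follow from the argument as written. The conclusion is correct, but it must be obtained otherwise --- e.g.\ from the classical fact that the jump times of an extremal process form a PRM with intensity $\dif t/t$, which is what the paper invokes from \cite{R87}, together with the transformation theory for Poisson processes to get the mean measure $\lambda_W([a,b])=\log(b/a)$ for the record values.

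A second, structural issue is that your continuity argument does not operate under the stated hypotheses. The theorem assumes only $Z_n\Rightarrow Z_H$ and $Z_n^{\leftarrow}\Rightarrow Z_H^{\leftarrow}$ in the $J_1$ topology; your remedy for the (legitimate) worry about spurious vanishing jumps retreats to the planar level and uses $N_n\Rightarrow\tilde N$ together with structural features of $\tilde N$ (upward piling, absence of ties, boundedness of the relevant box), none of which is part of the hypothesis and which would restrict the statement to the particular regimes where the planar convergence has been proved. The paper stays at the level of $\mathbb{D}((0,\infty))$: all of $Z_n$, $Z_n^{\leftarrow}$, $Z_H$, $Z_H^{\leftarrow}$ lie in $\widetilde{\mathbb{D}}((0,\infty))$, and the a.s.\ continuity of the restricted functional $h_3$ at the limit is taken from \cite{R87}, after which the CMT applies directly. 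If you want to keep your planar detour, you would have to restate the theorem with $N_n\Rightarrow\tilde N$ as the hypothesis, which is a different (and in general stronger) assumption.
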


We remark that:
\begin{itemize}
\item if the convergence of both $\mathcal{R}_n$ and $\mathcal{W}_n$ holds for a particular observable $\varphi:\X\to\mathbb{R}$, it also holds for any injective and monotone increasing transformation of $\varphi$;
\item the process $\mathcal{W}_n$ determines the jump times for the inverse process $Z_n^{\leftarrow}$.
\end{itemize}

\begin{proof}[Proof of Theorem \ref{thm.records}]
We first consider the process $\mathcal{R}_n$. Let $\widetilde{\mathbb{D}}((0,\infty))$ be the subset of $\mathbb{D}((0,\infty))$ consisting of functions which are constant between isolated jumps (i.e., the jumps do not accumulate anywhere in $(0,\infty)$). For an element $Y(t)\in\widetilde{\mathbb{D}}((0,\infty))$, let $h_3:\widetilde{\mathbb{D}}((0,\infty))\to M_p((0,\infty))$ be the counting function: $h_3(Y(t))=\sum_i\delta_{t_i}(0, t)$, where $t_i$ are jump times for $Y(t)$. As shown in \cite{R87} the function $h_3$ is a.s.\ continuous when restricted to functions on $\widetilde{\mathbb{D}}((0,\infty))$ in the $J_1$ topology. If $Y(t)=Y_G(t)$ is an extremal-G process, then $h_3(Y(t))$ is a PRM on $(0,\infty)$ with intensity $\gamma(t)=1/t$. This is also true for $Y(t)=Z_H(t)$ (note that, if $G=\bar H\circ f$ for some homeomorphism $f$, then $Y_G(t)$ has the same jump times of $Z_H(t)$). Hence to get the required convergence result we just need to apply the CMT.

Now consider the process $\mathcal{W}_n$ and the function $h_3$, but this time apply it to elements
of $Z_n^{\leftarrow}(t)\in\widetilde{\mathbb{D}}((0,\infty))$. We can see that $\mathcal{W}_n=h_3(Z_n^{\leftarrow})$ and that $Z_H$ has the same finite dimensional distributions of $S^{\leftarrow}(Y_{\Lambda})$, where $\Lambda(t)=\e^{-\e^{-t}}$, $S(x)=-\log(-\log(\bar H(x)))$ and $S^{\leftarrow}(t)=\inf\{x:S(x)<t\}$, so by the transformation theory for Poisson processes, the corresponding process $h_3(Z_H^{\leftarrow})$ is a PRM with mean measure $\lambda_W$ given by
\begin{align*}
\lambda_W([a,b])&:=|S(b)-S(a)|=\log(-\log(\bar H(b)))-\log(-\log(\bar H(a)))\\
&=\log(-\log(\e^{-\theta b}))-\log(-\log(\e^{-\theta a}))=\log(b/a),
\end{align*}
\ie a PRM with intensity $\gamma(t)=1/t$. Hence to get the required convergence we just need to apply the CMT.
\end{proof}

\begin{remark}
As before, let us assume the existence of normalising sequences $(a_n)_{n\in\N}\subset\R^+$ and $(b_n)_{n\in\N}\subset\R$ such that $n\p(a_n(X_0-b_n)>y)\to f(y)$ and $\p(a_n(M_n-b_n)\leq y)\to G(y)$, with $G=\bar H\circ f$ for some d.f. $G$ and $H$. If, instead of $\mathcal{W}_n$, we consider the process $\mathcal{V}_n:=\sum_k\delta_{Y_n(t_{k,n})}$, which determines the jump times for the inverse process $Y_n^{\leftarrow}(t):=\inf\{x:Y_n(x)>t\}$, then $\mathcal{V}_n$ converges weakly to the point process $\mathcal{V}$ on the domain of $G$, where $\mathcal{V}$ is a PRM with intensity measure $\lambda_V$ given by $\lambda_V([a,b]):=-\log(-\log G(b))+\log(-\log G(a))$, and in this case the limit process does depend on G, and hence on the form of the observable $\varphi$.
\end{remark}

\begin{remark}
\label{rem:convergence-records-repelling}
We consider the $J_1$ topology instead of the $M_1$ topology because, unlike what happened before with the functions $h_1$ and $h_2$, it is the suitable topology in which the new function $h_3$ is a.s.\ continuous. Note that in general convergence in $M_1$ does not imply convergence in $J_1$ and the convergence of the extremal processes that we obtained in Section~\ref{subsec:extremal-processes} was in the $M_1$ topology due to stacking on the vertical piles. However, in the setting considered in Section~\ref{subsubsec:2d-DS}, where $\zeta$ is a repelling periodic point, points of $N_n$ belonging to the same cluster can be disregarded except for the initial point (which is always the closest point to $\zeta$) without changing the projections $Z_n=h_1(N_n)$ and $Z_n^{\leftarrow}=h_2(N_n)$, and the same is true for the limiting process $\tilde N$ and its projections $Z_H=h_1(\tilde N)$ and $Z_H^{\leftarrow}=h_2(\tilde N)$ if we just consider the points for $\ell=0$. Hence, if we consider versions of these two-dimensional processes after deleting all but the first clustering mass point, in the case of $N_n$, and all the points in a vertical pile except the bottom one, in the case of $\tilde N$, we obtain point processes with the exact same projections by $h_1$ and $h_2$ of the original ones, but these new point processes (obtained after the deletions) allow to obtain the weak convergence of the extremal processes $Z_n$ and $Z_n^{\leftarrow}$ also valid in the $J_1$ topology, which in turn allows us to apply Theorem~\ref{thm.records} and obtain the convergence of the record point processes $\mathcal{R}_n$ and $\mathcal{W}_n$.
\end{remark}

We can also study the convergence of the record point processes by projecting directly from the space where the two-dimensional rare events $N_n$ are defined. This will allow us to see and understand from another perspective the convergence of the record point processes in the case of repelling periodic points considered in Section~\ref{subsubsec:2d-DS}, but also to give an example where there is no convergence.

We define 
\begin{align*}
  h:M_p([0,\infty)\times[0,\infty))&\longrightarrow M_p([0,\infty)) \\
  m=\sum_{i=1}^\infty \delta_{(t_i,z_i)}&\longmapsto h(m)=\sum_{i=1}^\infty \delta_{t_i}\I_{B_i}(z_i),
\end{align*}
where $B_i=\left[0,\inf\{z_j:\,t_j\leq t_i\}\right)$ if $t_i>\inf\{t_j:\,j\geq1\}$ and $B_i=\left[0,\inf\{z_j:\,t_j= \inf\{t_k:\,k\geq1\}\}\right]$, if $t_i=\inf\{t_j:\,j\geq1\}$.
Note that $h(N_n)=\mathcal{R}_n$.

For simple Radon point measures, $(m_n)_{n\in\N}$, vague convergence means that, as long as the limit process $m$ has no atoms on the boundary of a certain compact set, then the atoms of $m_n$ in that compact set converge to those of $m$ (see \cite[Proposition~3.13]{R87}).
Hence, it follows easily that $h$ is continuous at $m$ if $m$ satisfies the following properties: $m$ is simple, for all $\epsilon>0$ there exists $\gamma>0$ such that $m([0,\epsilon)\times[0,\gamma))>0$, $m([0,\infty)\times\{z\})\leq 1$ and $m(\{t\}\times[0,\infty))\leq 1$. 

Clearly, if we have no clustering, which means no piling on the vertical directions, then all point processes involved, including the limiting two-dimensional Poisson process $N$ live a.s. in a space of point measures $m$ with those properties and therefore, we can apply $h$ and CMT to obtain the convergence of record point process $\mathcal{R}_n$. In the presence of clustering, the limiting processes $\tilde N$, $N^\dag$, $\hat N$ or $\doublehat N$ do not satisfy the last property and indeed things can go wrong as the following example shows.

For each $n\in\N$, take $m_n=\delta_{(1-2/n,2)}+\delta_{(1,1)}$, which clearly converges in vague topology to $m=\delta_{(1,2)}+\delta_{(1,1)}$. But $h(m_n)=\delta_{1-2/n}+\delta_{1}$ does not converge vaguely to $h(m)=\delta_{1}$, since $h(m_n)([0,2))=2$ for all $n\in\N$ and $h(m)([0,2))=1$.

Observe that if the atoms of $m_n$ converged, in the first coordinate, from the right to $1$, say $m_n=\delta_{(1+2/n,2)}+\delta_{(1,1)}$, then we still would have that $h(m_n)$ converged to $h(m)$, in the vague topology. This is exactly what happens with $N_n$ in the cases described in Section~\ref{subsubsec:2d-DS}, Theorem~\ref{thm:convergence-hyperbolic} and Corollary~\ref{cor:two-dimensional}, where $\zeta$ is repelling periodic point, which means that a high observation is followed by increasingly smaller observations (which means increasingly higher respective frequencies $\tau$) within the same cluster, as well as in Examples~\ref{ex:2d} and \ref{ex:discontinuous}.

However, the same does not apply to Example~\ref{ex:exception}, where something similar to the non-continuity example occurs. Namely, if a record appears for $N_n$ that corresponds to a very high observation caused by the entrance of the orbit very close to $\pi/16$, then a new record observation occurs in the next iterate (corresponding to a $\tau$, which is approximately $3/10$ of the previous one). These two consecutive records are missed by the the projection of the limiting process, which only projects the bottom point of the two vertically aligned mass points. This shows that in this case, the convergence of the record point process cannot be established from the complete convergence of the two-dimensional process.

\bibliographystyle{abbrv}

\bibliography{Recordes}

\end{document}